\newtheorem{thm}{Theorem}[section]
\newtheorem{cor}[thm]{Corollary}
\newtheorem{lem}[thm]{Lemma}
\newtheorem{prop}[thm]{Proposition}
\theoremstyle{definition}
\newtheorem{defn}[thm]{Definition}
\theoremstyle{remark}
\newtheorem{rem}[thm]{Remark}
\numberwithin{equation}{section}
\newcommand{\prode}[1]{\left\langle#1\right\rangle}
\newcommand{\set}[1]{\left\{#1\right\}}
\newcommand{\paren}[1]{\left(#1\right)}
\begin{document}

\title[Oblique dual fusion frames]{Oblique dual fusion frames}

\author[Sigrid B. Heineken]{Sigrid B. Heineken$^{1,*}$}%

\author[Patricia M. Morillas]{Patricia M. Morillas$^2$\\\\ $^{1}$\textit{D\lowercase{epartamento de} M\lowercase{atem\'atica}, FCE\lowercase{y}N, U\lowercase{niversidad de }B\lowercase{uenos} A\lowercase{ires}, P\lowercase{abell\'on} I, C\lowercase{iudad }U\lowercase{niversitaria}, IMAS, UBA-CONICET, C1428EGA C.A.B.A., A\lowercase{rgentina}\\ $^2$ I\lowercase{nstituto de }M\lowercase{atem\'{a}tica }A\lowercase{plicada} S\lowercase{an} L\lowercase{uis, }UNSL-CONICET \lowercase{and} D\lowercase{epartamento de} M\lowercase{atem\'{a}tica}, FCFM\lowercase{y}N, UNSL, E\lowercase{j\'{e}rcito de los }A\lowercase{ndes 950, 5700 }S\lowercase{an} L\lowercase{uis,} A\lowercase{rgentina}}}
%


\thanks{* Corresponding author.\\
\textit{E-mail addresses:}  sheinek@dm.uba.ar (S. B. Heineken),
morillas@unsl.edu.ar (P. M. Morillas)}

\begin{abstract}
We introduce and develop the concept of oblique duality for fusion
frames. This concept provides a mathematical framework to deal with
problems in distributed signal processing where the signals,
considered as elements in a Hilbert space and under certain
consistency requirements, are analyzed in one subspace and are
reconstructed in another subspace.

\bigskip

\bigskip

{\bf Key words:} Frames, Oblique dual frames, Fusion frames, Oblique
dual fusion frames, Consistent reconstruction, Oblique projections.

\medskip

{\bf AMS subject classification:} Primary 42C15; Secondary 42C40,
46C05, 47B10.

\end{abstract}

\maketitle

\section{Introduction}

Fusion frames  \cite{Casazza-Kutyniok (2004), Casazza-Kutyniok-Li
(2008)} (see also \cite[Chapter 13]{Casazza-Kutyniok (2012)})
generalize the notion of frames \cite{{Casazza-Kutyniok (2012)},
Christensen (2016), Kovacevic-Chebira (2008)}. They are suitable in
applications such as signal processing and sampling theory, in
situations where one has to implement a local combination of data
vectors. They allow representations of the elements of a separable
Hilbert space using packets of linear coefficients.

Oblique dual frames have been introduced in \cite{Eldar (2003a)} and
studied in \cite{Eldar (2003b), Eldar-Werther
(2005),Christensen-Eldar (2004), Christensen-Eldar (2006)}. Oblique
dual frames are useful in cases in which the analysis of a signal
and its reconstruction have to be done in different subspaces.

The aim of this paper is to introduce and develop the concept of
oblique duality for fusion frames. This concept arises naturally
from the notion of Eldar of oblique dual frames in \cite{Eldar
(2003a)} and our definition of dual fusion frames in \cite
{Heineken-Morillas-Benavente-Zakowicz (2014), Heineken-Morillas
(2014)}.

The paper is organized as follows. In Section 2 we give an overview
of oblique projections, left inverses, frames, fusion frames and
fusion frame systems.

We begin Section 3 introducing the concept of consistent
reconstruction for fusion frames as a motivation of oblique duality.
Then we introduce the definitions of oblique dual fusion frames and
oblique dual fusion frame systems together with its basic
properties. We present two classes of oblique dual fusion frames of
special interest: the block-diagonal and, a subclass of them, the
component preserving ones. The advantage of block diagonals is that
they lead to a reconstruction formula that has a simpler expression.
Oblique dual fusion frame systems are an example of this type.
Component preserving duals are those that behave more similar to
classical vectorial oblique duals, in particular, in the way that
they can be obtained.

In Section 4 we analyze how the concepts of block-diagonal oblique
dual frame, oblique dual fusion frame system and oblique dual frame
are connected.

Section 5 presents statements that permit to obtain oblique dual
fusion frames from dual fusion frames and viceversa.

Section 6 includes several results that give methods for
constructing oblique dual fusion frames and oblique dual fusion
frame systems. They provide different alternatives to select the
most suitable from the computational point of view.

Finally, in Section 7 we introduce the concept of canonical oblique
dual fusion frame. Some basic properties and a characterization are
presented. Then we study when the canonical oblique dual is the
unique dual and the existence of non-canonical oblique duals.

\section{Preliminaries}

We begin introducing some notation and then we briefly review
definitions and properties that we use later.

\subsection{Notation}

We consider $\mathcal{H}, \mathcal{K}$ separable Hilbert spaces over
$\mathbb{F}=\mathbb{R}$ or $\mathbb{F}=\mathbb{C}$. The space of
bounded operators from $\mathcal{H}$ to $\mathcal{K}$ will be
denoted by $L(\mathcal{H},\mathcal{K})$ (we write $L(\mathcal{H})$
for $L(\mathcal{H},\mathcal{H})$). For $T \in
L(\mathcal{H},\mathcal{K})$ we denote the image, the null space and
the adjoint of $T$ by  ${\rm Im}(T)$, ${\rm Ker}(T)$ and $T^{*}$,
respectively. If $T$ has closed range we also consider the
Moore-Penrose pseudo-inverse of $T$ denoted by $T^{\dagger}$. The
inner product and the norm in $\mathcal{H}$ will be denoted by
$\langle \cdot ,\cdot \rangle$ and $\|\cdot\|$, respectively. If $T
\in L(\mathcal{H},\mathcal{K})$, then $\|T\|_{F}$ and $\|T\|_{sp}$
denote the Frobenius and the spectral norms of $T$, respectively.

Let $I$ be a countable index set. If $\{\mathcal{H}_{i}\}_{i \in I}$
is a sequence of Hilbert spaces, we consider the Hilbert space

\centerline{$\oplus_{i \in I}\mathcal{H}_{i} = \{(f_{i})_{i \in
I}:f_{i} \in \mathcal{H}_{i} \text{ and } \{\|f_{i}\|\}_{i \in I}
\in \ell^{2}(I)\}$}

\noindent with inner product $\langle(f_{i})_{i \in I},(g_{i})_{i
\in I}\rangle=\sum_{i \in I}\langle f_{i}, g_{i}\rangle.$

For $J \subseteq I$ let $\chi_{J} : I \rightarrow \{0, 1\}$ be the
characteristic function of $J.$ We abbreviate
$\chi_{\{j\}}=\chi_{j}$.

\subsection{Oblique projections and left inverses}

In the sequel $\mathcal{V}$ and $\mathcal{W}$ will be two closed
subspaces of $\mathcal{H}$ such that $\mathcal{H}=\mathcal{V} \oplus
\mathcal{W}^{\perp} $. By \cite[Lemma 2.1]{Christensen-Eldar (2004)}
this is equivalent to $\mathcal{H}=\mathcal{W} \oplus
\mathcal{V}^{\perp}$.

The oblique projection onto $\mathcal{V}$ along
$\mathcal{W}^{\perp}$, is the unique operator that satisfies

\centerline{$\pi_{\mathcal{V},\mathcal{W}^{\perp}}f=f \text{ for all
} f \in \mathcal{V}$,}

\centerline{ \, $\pi_{\mathcal{V},\mathcal{W}^{\perp}}f=0 \text{ for
all
 } f \in \mathcal{W}^{\perp}$.}

Equivalently, ${\rm
Im}(\pi_{\mathcal{V},\mathcal{W}^{\perp}})=\mathcal{V}$ and ${\rm
Ker}(\pi_{\mathcal{V},\mathcal{W}^{\perp}})=\mathcal{W}^{\perp}$. If
$\mathcal{W}=\mathcal{V}$ we obtain the orthogonal projection onto
$\mathcal{W}$, which we denote by $\pi_{\mathcal{W}}$. The next
properties involving oblique projections will be used throughout the
paper.

\begin{lem}\label{L composicion proyeccion oblicua con ortogonal}
Let $\mathcal{V}$ and $\mathcal{W}$ be two closed subspaces of
$\mathcal{H}$ such that $\mathcal{H}=\mathcal{V} \oplus
\mathcal{W}^{\perp}$. Then
\begin{enumerate}

  \item $\pi_{\mathcal{V},\mathcal{W}^{\perp}}\pi_{\mathcal{W}}=\pi_{\mathcal{V},\mathcal{W}^{\perp}}$

  \item $\pi_{\mathcal{W}}\pi_{\mathcal{V},\mathcal{W}^{\perp}}=\pi_{\mathcal{W}}$.

\end{enumerate}
\end{lem}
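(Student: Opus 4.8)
The plan is to treat both identities as operator equations on all of $\mathcal{H}$ and to reduce each of them to the single observation that, for any idempotent $P$, one has $\mathrm{Im}(I-P)=\mathrm{Ker}(P)$. Combined with the image/kernel data for the two projections in play --- namely $\mathrm{Im}(\pi_{\mathcal{V},\mathcal{W}^{\perp}})=\mathcal{V}$, $\mathrm{Ker}(\pi_{\mathcal{V},\mathcal{W}^{\perp}})=\mathcal{W}^{\perp}$, and $\mathrm{Ker}(\pi_{\mathcal{W}})=\mathcal{W}^{\perp}$ with $I-\pi_{\mathcal{W}}=\pi_{\mathcal{W}^{\perp}}$ --- each claim should collapse to the statement that a composition of a projection with the complementary projection of the other vanishes. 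The whole point is that the common subspace $\mathcal{W}^{\perp}$ is simultaneously the kernel of $\pi_{\mathcal{V},\mathcal{W}^{\perp}}$ and the kernel of $\pi_{\mathcal{W}}$; the decomposition $\mathcal{H}=\mathcal{V}\oplus\mathcal{W}^{\perp}$ is what makes both projections well defined on the same complement.

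For item (1) I would rewrite the desired equality as $\pi_{\mathcal{V},\mathcal{W}^{\perp}}(I-\pi_{\mathcal{W}})=0$. Since $I-\pi_{\mathcal{W}}=\pi_{\mathcal{W}^{\perp}}$ is the orthogonal projection with image $\mathcal{W}^{\perp}$, and $\mathcal{W}^{\perp}=\mathrm{Ker}(\pi_{\mathcal{V},\mathcal{W}^{\perp}})$, the composition sends every $f\in\mathcal{H}$ first into $\mathcal{W}^{\perp}$ and then to $0$. Concretely, for $f\in\mathcal{H}$ I would split $f=\pi_{\mathcal{W}}f+(I-\pi_{\mathcal{W}})f$ with the second summand in $\mathcal{W}^{\perp}$, apply $\pi_{\mathcal{V},\mathcal{W}^{\perp}}$, and use linearity together with $\pi_{\mathcal{V},\mathcal{W}^{\perp}}|_{\mathcal{W}^{\perp}}=0$ to obtain $\pi_{\mathcal{V},\mathcal{W}^{\perp}}f=\pi_{\mathcal{V},\mathcal{W}^{\perp}}\pi_{\mathcal{W}}f$.

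For item (2) I would argue the same way from the other side. Fix $f\in\mathcal{H}$ and consider $f-\pi_{\mathcal{V},\mathcal{W}^{\perp}}f=(I-\pi_{\mathcal{V},\mathcal{W}^{\perp}})f$. Because $\pi_{\mathcal{V},\mathcal{W}^{\perp}}$ is idempotent, this vector lies in $\mathrm{Im}(I-\pi_{\mathcal{V},\mathcal{W}^{\perp}})=\mathrm{Ker}(\pi_{\mathcal{V},\mathcal{W}^{\perp}})=\mathcal{W}^{\perp}$. Since $\mathcal{W}^{\perp}=\mathrm{Ker}(\pi_{\mathcal{W}})$, applying $\pi_{\mathcal{W}}$ kills it, giving $\pi_{\mathcal{W}}f=\pi_{\mathcal{W}}\pi_{\mathcal{V},\mathcal{W}^{\perp}}f$, which is the claim. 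As $f$ was arbitrary, both operator identities follow.

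I do not anticipate a genuine obstacle here: the only point requiring care is bookkeeping, namely correctly identifying which of the two subspaces $\mathcal{W}^{\perp}$ coincides with the relevant kernel in each case, and invoking $\mathrm{Im}(I-P)=\mathrm{Ker}(P)$ for idempotents rather than for orthogonal projections only (so that it applies to the oblique $\pi_{\mathcal{V},\mathcal{W}^{\perp}}$ as well). The hypothesis $\mathcal{H}=\mathcal{V}\oplus\mathcal{W}^{\perp}$, and its equivalent form $\mathcal{H}=\mathcal{W}\oplus\mathcal{V}^{\perp}$ noted before the lemma, guarantees that $\pi_{\mathcal{V},\mathcal{W}^{\perp}}$ is a well-defined bounded idempotent, so no additional boundedness or closedness verification is needed.
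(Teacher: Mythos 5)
Your proof is correct and is essentially the same argument as the paper's: both reduce each identity to the vanishing of the composition with the complementary projection, using that $\mathcal{W}^{\perp}$ is simultaneously $\mathrm{Ker}(\pi_{\mathcal{V},\mathcal{W}^{\perp}})$ and $\mathrm{Ker}(\pi_{\mathcal{W}})$ (the paper writes $I=\pi_{\mathcal{W}}+\pi_{\mathcal{W}^{\perp}}$ and $I=\pi_{\mathcal{V},\mathcal{W}^{\perp}}+\pi_{\mathcal{W}^{\perp},\mathcal{V}}$ where you write $I-\pi_{\mathcal{W}}$ and $I-\pi_{\mathcal{V},\mathcal{W}^{\perp}}$, which is the same decomposition). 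No gaps.
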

\begin{proof}

(1) $\pi_{\mathcal{V},\mathcal{W}^{\perp}}\pi_{\mathcal{W}}=
\pi_{\mathcal{V},\mathcal{W}^{\perp}}(\pi_{\mathcal{W}}+\pi_{\mathcal{W}^{\perp}})
=  \pi_{\mathcal{V},\mathcal{W}^{\perp}}$.

(2)
$\pi_{\mathcal{W}}\pi_{\mathcal{V},\mathcal{W}^{\perp}}=\pi_{\mathcal{W}}(\pi_{\mathcal{V},\mathcal{W}^{\perp}}+\pi_{\mathcal{W}^{\perp},\mathcal{V}})=\pi_{\mathcal{W}}$.
\end{proof}

Let $\mathcal{V}$ and $\mathcal{W}$ be two closed subspaces of
$\mathcal{H}$ such that $\mathcal{H}=\mathcal{V} \oplus
\mathcal{W}^{\perp}$. If $T \in L(\mathcal{H},\mathcal{K})$ and
${\rm Ker}(T)=\mathcal{W}^{\perp}$, we denote by
$\mathfrak{L}_{T}^{\mathcal{V},\mathcal{W}^{\perp}}$ the set of
oblique left inverses of $T$ on $\mathcal{V}$ along
$\mathcal{W}^{\perp}$ which image is equal to $\mathcal{V}$, i. e.,

\centerline{$\mathfrak{L}_{T}^{\mathcal{V},\mathcal{W}^{\perp}}=\{U
\in
L(\mathcal{K},\mathcal{H}):UT=\pi_{\mathcal{V},\mathcal{W}^{\perp}}
\text{ and } {\rm Im}(U)=\mathcal{V}\}$.}

The following lemma is useful to obtain oblique left inverses.

\begin{lem}\label{L T inyectiva en V}
Let $\mathcal{V}$ and $\mathcal{W}$ be two closed subspaces of
$\mathcal{H}$ such that $\mathcal{H}=\mathcal{V} \oplus
\mathcal{W}^{\perp}$. Let $T \in L(\mathcal{H},\mathcal{K})$ such
that ${\rm Ker}(T)=\mathcal{W}^{\perp}$. Then $T_{|\mathcal{V}}$ is
injective.
\end{lem}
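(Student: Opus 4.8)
The plan is to show directly that the only vector in $\mathcal{V}$ annihilated by $T$ is the zero vector. To this end I would start with an arbitrary $f \in \mathcal{V}$ satisfying $Tf = 0$ and argue that $f$ must be $0$; since $T_{|\mathcal{V}}$ is linear, establishing that its kernel is trivial is exactly the statement that $T_{|\mathcal{V}}$ is injective.

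The key step is to locate such an $f$ inside an intersection that is forced to be trivial. Because $Tf = 0$, by definition $f \in {\rm Ker}(T)$, and the hypothesis ${\rm Ker}(T) = \mathcal{W}^{\perp}$ places $f \in \mathcal{W}^{\perp}$. On the other hand $f \in \mathcal{V}$ by assumption, so $f \in \mathcal{V} \cap \mathcal{W}^{\perp}$. Now I would invoke the defining property of the direct sum decomposition $\mathcal{H} = \mathcal{V} \oplus \mathcal{W}^{\perp}$: a direct sum of two closed subspaces meets only in the origin, that is, $\mathcal{V} \cap \mathcal{W}^{\perp} = \{0\}$. Hence $f = 0$, as desired.

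There is essentially no hard part here; the whole content is the observation that the direct-sum hypothesis makes $\mathcal{V} \cap \mathcal{W}^{\perp}$ trivial, after which the identification ${\rm Ker}(T) = \mathcal{W}^{\perp}$ does the rest. If anything needed care it would be making explicit why a direct sum forces the intersection to vanish, but this is immediate from the uniqueness of the decomposition $0 = f + (-f)$ with $f \in \mathcal{V}$ and $-f \in \mathcal{W}^{\perp}$, which must coincide with the trivial decomposition $0 = 0 + 0$. I would therefore keep the argument to these two short paragraphs and conclude that $T_{|\mathcal{V}}$ is injective.
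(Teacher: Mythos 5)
Your proof is correct, and it is in fact more direct than the one in the paper. Both arguments ultimately rest on the same fact --- that the decomposition $\mathcal{H}=\mathcal{V}\oplus\mathcal{W}^{\perp}$ forces $\mathcal{V}\cap\mathcal{W}^{\perp}=\{0\}$ --- but you reach it immediately: by linearity, injectivity of $T_{|\mathcal{V}}$ reduces to triviality of its kernel, and ${\rm Ker}(T_{|\mathcal{V}})=\mathcal{V}\cap{\rm Ker}(T)=\mathcal{V}\cap\mathcal{W}^{\perp}=\{0\}$. The paper instead takes two vectors $f,g\in\mathcal{V}$ with $Tf=Tg$, rewrites this as $T\pi_{\mathcal{W}}f=T\pi_{\mathcal{W}}g$ using ${\rm Ker}(T)=\mathcal{W}^{\perp}$, invokes the injectivity of $T_{|\mathcal{W}}$ to conclude $\pi_{\mathcal{W}}f=\pi_{\mathcal{W}}g$, and only then deduces $f-g\in\mathcal{V}\cap\mathcal{W}^{\perp}=\{0\}$. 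That detour through the orthogonal projection $\pi_{\mathcal{W}}$ buys nothing here; your two-line kernel argument is the cleaner formulation of the same idea, and your explicit justification of why a direct sum has trivial intersection (uniqueness of the decomposition of $0$) closes the only point one might quibble over.
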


\begin{proof}
Let $f, g \in \mathcal{V}$ such that $Tf=Tg$. Since ${\rm
Ker}(T)=\mathcal{W}^{\perp}$ the last equality is equivalent to
$T\pi_{\mathcal{W}}f=T\pi_{\mathcal{W}}g$. Since $T_{|\mathcal{W}}$
is injective, $\pi_{\mathcal{W}}f=\pi_{\mathcal{W}}g$. Therefore,
$f-\pi_{\mathcal{W}^{\perp}}f=g-\pi_{\mathcal{W}^{\perp}}g$. Since
$\mathcal{H}=\mathcal{V} \oplus \mathcal{W}^{\perp}$, $f=g$.
\end{proof}

One way to get $U \in
\mathfrak{L}_{T}^{\mathcal{V},\mathcal{W}^{\perp}}$ is the
following: By Lemma~\ref{L T inyectiva en V}, if $g \in
T(\mathcal{V})$ there exists a unique $f \in \mathcal{V}$ such that
$Tf=g$. Set $Ug = f$. If $g \notin T(\mathcal{V})$ there are several
possibilities, for example, $Ug=U(g_{1}+g_{2})=Ug_{1}$ with $g_{1}
\in T(\mathcal{V})$ and $g_{2} \in T(\mathcal{V})^{\perp}$.

If $\mathcal{V}=\mathcal{W}$, we write
$\mathfrak{L}_{T}^{\mathcal{W}}=\{U\in
L(\mathcal{K},\mathcal{H}):UT=\pi_{\mathcal{W}} \text{ and } {\rm
Im}(U)=\mathcal{W}\}$. This is the set of left inverses of $T$ on
$\mathcal{W}$ such that ${\rm Im}(U)=\mathcal{W}$. The next
proposition relates the sets $\mathfrak{L}_{T}^{\mathcal{W}}$ and
$\mathfrak{L}_{T}^{\mathcal{V},\mathcal{W}^{\perp}}$.

\begin{prop}\label{P relacion inversa inversa oblicua}
Let $\mathcal{V}$ and $\mathcal{W}$ be two closed subspaces of
$\mathcal{H}$ such that $\mathcal{H}=\mathcal{V} \oplus
\mathcal{W}^{\perp}$. Let $T \in L(\mathcal{H},\mathcal{K})$ such
that ${\rm Ker}(T)=\mathcal{W}^{\perp}$. The map $A \in
\mathfrak{L}_{T}^{\mathcal{W}} \mapsto
\pi_{\mathcal{V},\mathcal{W}^{\perp}}A \in
\mathfrak{L}_{T}^{\mathcal{V},\mathcal{W}^{\perp}}$ is a linear
bijection, and its inverse is the map $B \in
\mathfrak{L}_{T}^{\mathcal{V},\mathcal{W}^{\perp}} \mapsto
\pi_{\mathcal{W}}B \in \mathfrak{L}_{T}^{\mathcal{W}}$.
\end{prop}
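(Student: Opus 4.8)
The plan is to check four things: that the assignment $A \mapsto \pi_{\mathcal{V},\mathcal{W}^{\perp}}A$ sends $\mathfrak{L}_{T}^{\mathcal{W}}$ into $\mathfrak{L}_{T}^{\mathcal{V},\mathcal{W}^{\perp}}$, that $B \mapsto \pi_{\mathcal{W}}B$ sends $\mathfrak{L}_{T}^{\mathcal{V},\mathcal{W}^{\perp}}$ into $\mathfrak{L}_{T}^{\mathcal{W}}$, that the two compositions are the identity on the respective sets, and that both maps are linear. Linearity is automatic because each map is left composition with a fixed bounded operator, so the real content lies in the three remaining points, all of which I expect to reduce to the two identities in Lemma~\ref{L composicion proyeccion oblicua con ortogonal}.

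For well-definedness of the forward map, fix $A \in \mathfrak{L}_{T}^{\mathcal{W}}$ and put $U = \pi_{\mathcal{V},\mathcal{W}^{\perp}}A$. The intertwining relation is a one-line computation: $UT = \pi_{\mathcal{V},\mathcal{W}^{\perp}}(AT) = \pi_{\mathcal{V},\mathcal{W}^{\perp}}\pi_{\mathcal{W}} = \pi_{\mathcal{V},\mathcal{W}^{\perp}}$ using $AT = \pi_{\mathcal{W}}$ and Lemma~\ref{L composicion proyeccion oblicua con ortogonal}(1). The image condition ${\rm Im}(U) = \mathcal{V}$ requires $\pi_{\mathcal{V},\mathcal{W}^{\perp}}(\mathcal{W}) = \mathcal{V}$, since ${\rm Im}(A) = \mathcal{W}$; I would obtain this by noting that for $v \in \mathcal{V}$ the element $\pi_{\mathcal{W}}v \in \mathcal{W}$ satisfies $\pi_{\mathcal{V},\mathcal{W}^{\perp}}\pi_{\mathcal{W}}v = \pi_{\mathcal{V},\mathcal{W}^{\perp}}v = v$, again by Lemma~\ref{L composicion proyeccion oblicua con ortogonal}(1) and $v \in \mathcal{V}$. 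The reverse map is treated symmetrically: for $B \in \mathfrak{L}_{T}^{\mathcal{V},\mathcal{W}^{\perp}}$ the relation $(\pi_{\mathcal{W}}B)T = \pi_{\mathcal{W}}\pi_{\mathcal{V},\mathcal{W}^{\perp}} = \pi_{\mathcal{W}}$ comes from part (2), and $\pi_{\mathcal{W}}(\mathcal{V}) = \mathcal{W}$ follows because $\pi_{\mathcal{W}}\pi_{\mathcal{V},\mathcal{W}^{\perp}}w = \pi_{\mathcal{W}}w = w$ for every $w \in \mathcal{W}$.

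Finally I would verify the two compositions are identities. Starting from $A$, I have $\pi_{\mathcal{W}}(\pi_{\mathcal{V},\mathcal{W}^{\perp}}A) = (\pi_{\mathcal{W}}\pi_{\mathcal{V},\mathcal{W}^{\perp}})A = \pi_{\mathcal{W}}A$ by part (2), and $\pi_{\mathcal{W}}A = A$ because ${\rm Im}(A) = \mathcal{W}$ forces $\pi_{\mathcal{W}}$ to act as the identity on the range of $A$. The other composition is dual: $\pi_{\mathcal{V},\mathcal{W}^{\perp}}(\pi_{\mathcal{W}}B) = \pi_{\mathcal{V},\mathcal{W}^{\perp}}B = B$ by part (1) together with ${\rm Im}(B) = \mathcal{V}$. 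I do not anticipate any serious obstacle; the one place that demands slight care is the pair of surjectivity-type identities $\pi_{\mathcal{V},\mathcal{W}^{\perp}}(\mathcal{W}) = \mathcal{V}$ and $\pi_{\mathcal{W}}(\mathcal{V}) = \mathcal{W}$, since these, rather than the intertwining relations, are what guarantee that the image constraints built into the definitions of $\mathfrak{L}_{T}^{\mathcal{V},\mathcal{W}^{\perp}}$ and $\mathfrak{L}_{T}^{\mathcal{W}}$ are actually met.
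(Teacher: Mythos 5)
Your proof is correct and follows essentially the same route as the paper's: both verify well-definedness of the two maps via the composition identities $\pi_{\mathcal{V},\mathcal{W}^{\perp}}\pi_{\mathcal{W}}=\pi_{\mathcal{V},\mathcal{W}^{\perp}}$ and $\pi_{\mathcal{W}}\pi_{\mathcal{V},\mathcal{W}^{\perp}}=\pi_{\mathcal{W}}$, and then check that the two compositions are identities using the image conditions ${\rm Im}(A)=\mathcal{W}$ and ${\rm Im}(B)=\mathcal{V}$. The only cosmetic difference is that you spell out the well-definedness of the reverse map, which the paper leaves as ``similar.''
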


\begin{proof}
First we will show that the map $A \in
\mathfrak{L}_{T}^{\mathcal{W}} \mapsto
\pi_{\mathcal{V},\mathcal{W}^{\perp}}A \in
\mathfrak{L}_{T}^{\mathcal{V},\mathcal{W}^{\perp}}$ is well defined.
Let $A\in\mathfrak{L}_{T}^{\mathcal{W}}$. We have,
$\pi_{\mathcal{V},\mathcal{W}^{\perp}}AT=\pi_{\mathcal{V},\mathcal{W}^{\perp}}\pi_{\mathcal{W}}=\pi_{\mathcal{V},\mathcal{W}^{\perp}}$.
On the other hand, ${\rm
Im}(\pi_{\mathcal{V},\mathcal{W}^{\perp}}A)\subseteq\mathcal{V}$.
Let $f\in\mathcal{V}$. Since ${\rm Im}(A)=\mathcal{W}$ there exists
$g\in \mathcal{K}$ such that $\pi_{\mathcal{W}}f=Ag$ and then
$f=\pi_{\mathcal{V},\mathcal{W}^{\perp}}f=\pi_{\mathcal{V},\mathcal{W}^{\perp}}\pi_{\mathcal{W}}f=\pi_{\mathcal{V},\mathcal{W}^{\perp}}Ag$.
Therefore, ${\rm
Im}(\pi_{\mathcal{V},\mathcal{W}^{\perp}}A)=\mathcal{V}$. This shows
that $\pi_{\mathcal{V},\mathcal{W}^{\perp}}A\in\mathfrak{L}_
{T}^{\mathcal{V},\mathcal{W}^{\perp}}$.

The linearity is trivial. Now we will prove that it is a bijection
showing that the map $B \in
\mathfrak{L}_{T}^{\mathcal{V},\mathcal{W}^{\perp}} \mapsto
\pi_{\mathcal{W}}B \in \mathfrak{L}_{T}^{\mathcal{W}}$ is its
inverse. First we note that in a similar manner as before it can be
proved that it is a well defined linear map. Let
$A\in\mathfrak{L}_{T}^{\mathcal{W}}$. Since ${\rm
Im}(A)=\mathcal{W}$,
$\pi_{\mathcal{W}}\pi_{\mathcal{V},\mathcal{W}^{\perp}}A=\pi_{\mathcal{W}}A=A$.
Let now $B \in \mathfrak{L}_{T}^{\mathcal{V},\mathcal{W}^{\perp}}$.
Using that ${\rm Im}(B)=\mathcal{V}$,
$\pi_{\mathcal{V},\mathcal{W}^{\perp}}\pi_{\mathcal{W}}B=\pi_{\mathcal{V},\mathcal{W}^{\perp}}B=B$.
This proves that each map is the inverse of the other.
\end{proof}

\subsection{Frames}

The concept of frame has been introduced by Duffin  and Schaeffer in
\cite{Duffin-Schaeffer (1952)}. Using a frame, each element of a
Hilbert space has a representation which in general is not unique.
This flexibility makes them attractive for many applications
involving signal expansions.

We will now recall the concept of frame for a closed subspace of
$\mathcal{H}$.

\begin{defn}\label{D frame}
Let $\mathcal{W}$ be a closed subspace of $\mathcal{H}$ and
$\{f_{i}\}_{i \in I} \subset \mathcal{W}$. Then $\{f_{i}\}_{i \in
I}$ is a \emph{frame} for $\mathcal{W}$, if there exist constants $0
< \alpha \leq \beta < \infty$ such that
\begin{equation}\label{E cond frame}
\alpha\|f\|^{2} \leq \sum_{i \in I}|\langle f,f_{i}\rangle |^{2}
\leq \beta\|f\|^{2}  \text{ for all $f \in \mathcal{W}$.}
\end{equation}
\end{defn}

If the right inequality in (\ref{E cond frame}) is satisfied,
$\{f_{i}\}_{i \in I}$ is a {\it Bessel sequence} for $\mathcal{W}$.
The constants $\alpha$ and $\beta$ are the {\it frame bounds}. In
case $\alpha=\beta,$ we call $\{f_{i}\}_{i \in I}$ an {\it
$\alpha$-tight frame}, and if $\alpha=\beta=1$ it is a {\it Parseval
frame} for $\mathcal{W}$.

To a Bessel sequence $\mathcal{F}=\{f_{i}\}_{i \in I}$ for
$\mathcal{W}$ we associate the {\it synthesis operator}

\centerline{$T_{\mathcal{F}}:\ell^2(I)\rightarrow \mathcal{H},$
$T_{\mathcal{F}}\{c_i\}_{i\in I}=\sum_{i\in I}c_if_i,$}

\noindent the {\it analysis operator}

\centerline{$T_{\mathcal{F}}^{*}: \mathcal{H}\rightarrow \ell^2(I)$,
$T_{\mathcal{F}}^{*}f=\{\langle f,f_i\rangle \}_{i\in I},$}

\noindent and the {\it frame operator}

\centerline{$S_{\mathcal{F}}=T_{\mathcal{F}}T_{\mathcal{F}}^{*}$.}

\noindent A Bessel sequence $\mathcal{F}=\{f_{i}\}_{i \in I}$ for
$\mathcal{W}$ is a frame for $\mathcal{W}$ if and only ${\rm
Im}(T_{\mathcal{F}})=\mathcal{W}$, or equivalently,
$S_{\mathcal{F}}$ is invertible when restricted to $\mathcal{W}$.
Furthermore, $\mathcal{F}$ is an $\alpha$-tight frame for
$\mathcal{W}$ if and only if $S_{\mathcal{F}}=\alpha
\pi_{\mathcal{W}}$.

If the subspace $\mathcal{W}$ is finite-dimensional we will consider
finite frames for it, i.e., frames with a finite number of elements.
It is worth to mention that if $\text{dim}(\mathcal{W})<\infty$ then
$\{f_{i}\}_{i \in I} \subset \mathcal{H}$ is a frame for
$\mathcal{W}$ if and only $\text{span}\{f_{i}\}_{i \in
I}=\mathcal{W}$.

For more details about frames we refer the reader to
\cite{Casazza-Kutyniok (2012), Christensen (2016), Kovacevic-Chebira
(2008)}. The concept of oblique dual frame \cite{Eldar (2003a),
Eldar (2003b), Eldar-Werther (2005)} is defined as follows:

\begin{defn}\label{D oblique dual frame}
Let $\mathcal{W}$ and $\mathcal{V}$ be two closed subspaces of
$\mathcal{H}$ such that $\mathcal{H}=\mathcal{V}\oplus
\mathcal{W}^{\perp}$. Let $\mathcal{F}=\{f_{i}\}_{i \in I}$ be a
frame for $\mathcal{W}$ and $\mathcal{G}=\{g_{i}\}_{i \in I}$ be a
frame for $\mathcal{V}$. If

\centerline{$T_{\mathcal{G}}T_{\mathcal{F}}^{*}=\pi_{\mathcal{V},\mathcal{W}^{\perp}},$}

\noindent we say that $\mathcal{G}$ is an {\em oblique dual frame}
of $\mathcal{F}$ on $\mathcal{V}$.
\end{defn}

The sequence
$\{\pi_{\mathcal{V},\mathcal{W}^{\perp}}S_{\mathcal{F}}^{\dagger}f_{i}\}_{i
\in I}$ is the \emph{canonical oblique dual frame} of $\{f_{i}\}_{i
\in I}$ on $\mathcal{V}$.

\begin{rem}
When $\mathcal{V}=\mathcal{W}$ we obtain the classical duals and we
simply say dual frame instead of oblique dual frame on
$\mathcal{W}$.
\end{rem}

A {\it Riesz basis} for $\mathcal{W}$ is a frame for $\mathcal{W}$
which is also a basis. Observe that a Riesz basis has a unique dual,
the canonical one.

\subsection{Fusion frames}

Fusion frames were introduced by Casazza and Kutyniok in
\cite{Casazza-Kutyniok (2004)} under the name of {\it frames of
subspaces}. They turned out to be a useful tool for handling
problems in sensor networking, distributed processing, etc.
Throughout the paper we will work with fusion frames for closed
subspaces of $\mathcal{H}$.

Assume $\{W_{i}\}_{i \in I}$ is a family of closed subspaces in
$\mathcal{W}$, and $\{w_{i}\}_{i \in I}$ a family of weights, i.e.,
$w_{i}
> 0$ for all $i \in I$. We denote $\{W_{i}\}_{i \in I}$ with $\mathbf{W}$, $\{w_{i}\}_{i
\in I}$ with $\mathbf{w}$ and $\{(W_i,w_{i})\}_{i \in I}$ with
$(\mathbf{W},\mathbf{w})$. If $T \in L(\mathcal{H},\mathcal{K})$ we
write $(T\mathbf{W},\mathbf{w})$ for $\{(TW_i, w_{i})\}_{i \in I}.$

We consider the Hilbert space $\mathcal{K}_{\mathcal{W}}:=\oplus_{i
\in I}W_{i}$.

\begin{defn}\label{D fusion frame}
We say that $(\mathbf{W},\mathbf{w})$ is a {\it fusion frame} for
$\mathcal{W}$, if there exist constants $0 < \alpha \leq \beta <
\infty$ such that
\begin{equation}\label{E cond ff}
\alpha\|f\|^{2} \leq \sum_{i \in I}w_{i}^{2}\|\pi_{W_{i}}(f)\|^{2}
\leq \beta\|f\|^{2}  \text{ for all $f \in \mathcal{W}$.}
\end{equation}
\end{defn}

We call $\alpha$ and $\beta$ the \textit{fusion frame bounds}. The
family $(\mathbf{W},\mathbf{w})$ is called an $\alpha$-\textit{tight
fusion frame} for $\mathcal{W}$, if in (\ref{E cond ff}) the
constants $\alpha$ and $\beta$ can be chosen so that $\alpha =
\beta$, and a \textit{Parseval fusion frame} for $\mathcal{W}$
provided that $\alpha = \beta = 1.$ If $(\mathbf{W},\mathbf{w})$ has
an upper fusion frame bound, but not necessarily a lower bound, it
is called a \textit{Bessel fusion sequence} for $\mathcal{W}$ with
Bessel fusion bound $\beta.$ If $w_{i}=c$ for all $i\in I,$ we write
$\mathbf{w}=c$. If $\mathcal{W}$ is the direct sum of the $W_i$ we
say that $(\mathbf{W},{\bf w})$ is a \textit{Riesz fusion basis} for
$\mathcal{W}$. We will refer to a fusion frame that is not a Riesz
fusion basis as an overcomplete fusion frame. A fusion frame
$(\mathbf{W},1)$ is an \textit{orthonormal fusion basis} for
$\mathcal{W}$ if {$\mathcal{W}$ is the orthogonal sum of the
subspaces $W_{i}.$}

To a Bessel fusion sequence $(\mathbf{W},\mathbf{w})$ for
$\mathcal{W}$ we  associate the \textit{synthesis operator}

\centerline{$T_{\mathbf{W},\mathbf{w}} : \mathcal{K}_{\mathcal{W}}
\rightarrow  \mathcal{H},$
$\,\,\,T_{\mathbf{W},\mathbf{w}}(f_{i})_{i \in I}=\sum_{i \in
  I}w_{i}f_{i},$}

\noindent the \textit{analysis operator}

\centerline{$T_{\mathbf{W},\mathbf{w}}^{*} : \mathcal{H} \rightarrow
\mathcal{K}_{\mathcal{W}},$
$\,\,\,T_{\mathbf{W},\mathbf{w}}^{*}f=(w_{i}\pi_{W_{i}}(f))_{i \in
I}$}

\noindent and the {\it fusion frame operator}

\centerline{$S_{\mathbf{W},\mathbf{w}}=T_{\mathbf{W},\mathbf{w}}T_{\mathbf{W},\mathbf{w}}^{*}.$}

As it happens for frames, $(\mathbf{W},\mathbf{w})$ is a Bessel
fusion sequence for $\mathcal{W}$ if and only if
$T_{\mathbf{W},\mathbf{w}}$ is a well defined bounded linear
operator. A Bessel fusion sequence $(\mathbf{W},\mathbf{w})$  for
$\mathcal{W}$ is a fusion frame for $\mathcal{W}$ if and only if
${\rm Im}(T_{\mathbf{W},\mathbf{w}})=\mathcal{W}$, or equivalently,
$S_{\mathbf{W},\mathbf{w}}$ restricted to $\mathcal{W}$ is
bijective. Additionally, $(\mathbf{W},\mathbf{w})$ is an
$\alpha$-tight fusion frame for $\mathcal{W}$ if and only if
$S_{\mathbf{W},\mathbf{w}}=\alpha \pi_{\mathcal{W}}.$

For finite-dimensional subspaces $\mathcal{W}$ we will consider
finite fusion frames, i.e., fusion frames with a finite set of
indices. Note that if $\text{dim}(\mathcal{W})<\infty$ then
$(\mathbf{W},\mathbf{w})$ is a frame for $\mathcal{W}$ if and only
$\text{span}\cup_{i \in I}W_{i}=\mathcal{W}$.

Having fusion frames allows local processing in each of the
subspaces. In view of this, having a set of local frames for its
subspaces is convenient.

\begin{defn}
Let $\mathcal{W}$ be a closed subspace of $\mathcal{H}$, let
$(\mathbf{W},{\bf w})$ be a fusion frame (\emph{Bessel fusion
sequence}) for $\mathcal{W}$, and let $\{f_{i,l}\}_{l\in L_i}$ be a
frame for $W_{i}$ for $i \in I$. Then $\{(W_i,w_i,\{f_{i,l}\}_{l\in
L_i})\}_{i \in I}$ is called a \emph{fusion frame system}
(\emph{Bessel fusion system}) for $\mathcal{W}$.
\end{defn}

Throughout this work we will use the notation
$\mathcal{F}_{i}=\{f_{i,l}\}_{l \in L_i}$,
$\mathcal{F}=\{\mathcal{F}_{i}\}_{i \in I},$ ${\bf
w}\mathcal{F}=\{w_{i}\mathcal{F}_{i}\}_{i \in I},$ and we write
$(\mathbf{W}, {\bf w}, \mathcal{F})$ for
$\{(W_i,w_i,\{f_{i,l}\}_{l\in L_i})\}_{i \in I}$. If $T \in
L(\mathcal{H},\mathcal{K})$ we write $T\mathcal{F}$ for
$\{\{T{f}_{i,l}\}_{l\in L_i}\}_{i \in I}$ and $T\mathcal{F}_{i}$ for
$\{Tf_{i,l}\}_{l \in L_i}$.

\begin{thm}\cite[Theorem 3.2]{Casazza-Kutyniok (2004)}\label{T wF marco sii WwF fusion frame system}
Let $\mathcal{W}$ be a closed subspace of $\mathcal{H}$. Given
$(\mathbf{W},\mathbf{w})$, let $\mathcal{F}_{i}$ be a frame for
$W_{i}$ with frame bounds $\alpha_{i}, \beta_{i}$ for each $i \in I$
such that $0 < \alpha ={\rm inf}_{i \in I}\alpha_{i} \leq {\rm
sup}_{i \in I}\beta_{i}= \beta < \infty$. The following assertions
are equivalents:
\begin{enumerate}
  \item $\mathbf{w}\mathcal{F}$ is a frame for $\mathcal{W}$.
  \item $(\mathbf{W},\mathbf{w})$ is a fusion frame for
  $\mathcal{W}$.
\end{enumerate}
If $(\mathbf{W},\mathbf{w})$ is a fusion frame for $\mathcal{W}$
with fusion frame bounds $\gamma$ and $\delta$, then
$\mathbf{w}\mathcal{F}$ is a frame for $\mathcal{W}$ with frame
bounds $\alpha \gamma$ and $\beta \delta$. If
$\mathbf{w}\mathcal{F}$ is a frame for $\mathcal{W}$ with frame
bounds $\gamma$ and $\delta$, then $(\mathbf{W}, {\bf w})$ is a
fusion frame for $\mathcal{W}$ with fusion frame bounds
$\frac{\gamma}{\beta}$ and $\frac{\delta}{\alpha}$.

The previous assertions are valid if we replace fusion frame by
Bessel fusion sequence and consider only the upper bounds.
\end{thm}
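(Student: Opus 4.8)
The plan is to reduce the entire statement to a single two-sided inequality comparing the frame sum of $\mathbf{w}\mathcal{F}$ with the fusion frame sum of $(\mathbf{W},\mathbf{w})$, and then to read off every claim from it. First I would fix $f \in \mathcal{W}$ (in fact any $f \in \mathcal{H}$ works) and regroup the frame sum of $\mathbf{w}\mathcal{F}$ according to the blocks $i \in I$:
\[
\sum_{i \in I}\sum_{l \in L_i} |\langle f, w_i f_{i,l}\rangle|^2 = \sum_{i \in I} w_i^2 \sum_{l \in L_i} |\langle f, f_{i,l}\rangle|^2 .
\]
The interchange of the order of summation is harmless because all terms are nonnegative. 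The key observation is that each $f_{i,l}$ lies in $W_i$, so $\langle f, f_{i,l}\rangle = \langle \pi_{W_i} f, f_{i,l}\rangle$ with $\pi_{W_i} f \in W_i$; this is what lets me bring the \emph{local} frame inequality into play even though $f$ itself need not lie in $W_i$.

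Next I would apply the frame condition \eqref{E cond frame} for $\mathcal{F}_i$ to the vector $\pi_{W_i} f \in W_i$, which gives, for each $i$,
\[
\alpha_i \|\pi_{W_i} f\|^2 \leq \sum_{l \in L_i} |\langle f, f_{i,l}\rangle|^2 \leq \beta_i \|\pi_{W_i} f\|^2 .
\]
Multiplying by $w_i^2$, summing over $i$, and replacing $\alpha_i$ by its infimum $\alpha$ and $\beta_i$ by its supremum $\beta$ yields the central inequality
\[
\alpha \sum_{i \in I} w_i^2 \|\pi_{W_i} f\|^2 \leq \sum_{i \in I} w_i^2 \sum_{l \in L_i} |\langle f, f_{i,l}\rangle|^2 \leq \beta \sum_{i \in I} w_i^2 \|\pi_{W_i} f\|^2 ,
\]
whose two outer expressions are $\alpha$ and $\beta$ times the fusion frame sum of $(\mathbf{W},\mathbf{w})$, and whose middle expression is exactly the frame sum of $\mathbf{w}\mathcal{F}$.

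With this inequality established, both implications and the explicit constants follow by elementary bookkeeping. For $(2)\Rightarrow(1)$, if $(\mathbf{W},\mathbf{w})$ satisfies \eqref{E cond ff} with bounds $\gamma,\delta$, I would chain those bounds through the central inequality to obtain $\alpha\gamma\|f\|^2 \leq \sum_{i,l}|\langle f, w_i f_{i,l}\rangle|^2 \leq \beta\delta\|f\|^2$, so $\mathbf{w}\mathcal{F}$ is a frame for $\mathcal{W}$ with bounds $\alpha\gamma$ and $\beta\delta$. For $(1)\Rightarrow(2)$, if $\mathbf{w}\mathcal{F}$ is a frame with bounds $\gamma,\delta$, I would use the upper half of the central inequality to get the lower fusion bound $\gamma/\beta$ and the lower half to get the upper fusion bound $\delta/\alpha$. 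The Bessel statement is the same argument using only the upper halves of the inequalities, so no lower bound is invoked.

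I expect the only genuine obstacle to be the very first step: justifying the replacement $\langle f, f_{i,l}\rangle = \langle \pi_{W_i} f, f_{i,l}\rangle$, so that the local frame inequality, which a priori applies only to vectors of $W_i$, can legitimately be used for an arbitrary $f$, together with the routine but necessary verification that the double sum may be reordered. Everything past the central inequality is arithmetic with the four constants $\alpha,\beta,\gamma,\delta$.
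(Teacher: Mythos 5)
Your proof is correct and is essentially the standard argument: the paper does not prove this theorem itself but cites it from Casazza--Kutyniok, and the proof there proceeds by exactly this regrouping of the double sum, the substitution $\langle f,f_{i,l}\rangle=\langle \pi_{W_i}f,f_{i,l}\rangle$ (valid since $f_{i,l}\in W_i$), and the sandwiching of the frame sum between $\alpha$ and $\beta$ times the fusion frame sum. All four bound relations and the Bessel variant then follow by the bookkeeping you describe.
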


For more details about fusion frames and fusion frame systems we
refer the reader to \cite{Casazza-Kutyniok (2004),
Casazza-Kutyniok-Li (2008)} (see also \cite[Chapter
13]{Casazza-Kutyniok (2012)}).

\section{Oblique duality for fusion frames and fusion frame systems}

One reason for considering oblique duality is the so called
consistent reconstruction. Based on the vectorial case \cite{Eldar
(2003a), Eldar-Werther (2005)} and the relation between frames and
fusion frames, we next introduce the concept of consistent
reconstruction for fusion frames.

\subsection{Consistent reconstruction}

Let $\mathcal{W}$ and $\mathcal{V}$ be two closed subspaces of
$\mathcal{H}$. Let $(\mathbf{W},{\bf w})$ be a fusion frame for
$\mathcal{W}$. Assume that the measurements $T_{\mathbf{W},{\bf
w}}^{*}f=(w_{i}\pi_{W_{i}}f)_{i \in I}$ of an unknown signal $f \in
\mathcal{H}$ are given. Our goal is the reconstruction of $f$ from
these measurements using a fusion frame $({\bf V},{\bf v})$ for
$\mathcal{V}$ in such a way that the reconstruction $\widehat{f}$ is
a good approximation of $f$. Specifically  the following two
conditions are required:
\begin{enumerate}
  \item[(i)] \textit{Uniqueness of the reconstructed signal}: If $f, g \in \mathcal{V}$ and $T_{\mathbf{W},{\bf
w}}^{*}f=T_{\mathbf{W},{\bf w}}^{*}g$, then $f=g$.

  \item[(ii)] \textit{Consistent sampling}: $T_{\mathbf{W},{\bf w}}^{*}\widehat{f}=T_{\mathbf{W},{\bf w}}^{*}f$
for all $f \in \mathcal{H}$.

\end{enumerate}

Requirement (i) is equivalent to
$\mathcal{V}\cap\mathcal{W}^{\perp}=\{0\}$. To see this, suppose
that (i) holds and consider $f \in
\mathcal{V}\cap\mathcal{W}^{\perp}$. Since $f \in
\mathcal{W}^{\perp}$, $T_{\mathbf{W},{\bf
w}}^{*}f=0=T_{\mathbf{W},{\bf w}}^{*}0$. Since $f \in \mathcal{V}$,
by (i) this implies that $f=0$, and thus
$\mathcal{V}\cap\mathcal{W}^{\perp}=\{0\}$. Suppose now that
$\mathcal{V}\cap\mathcal{W}^{\perp}=\{0\}$. Let $f, g \in
\mathcal{V}$ such that $T_{\mathbf{W},{\bf w}}^{*}(f-g)=0$. Thus
$f-g \in {\rm Ker}(T_{\mathbf{W},{\bf w}}^{*})={\rm
Im}(T_{\mathbf{W},{\bf w}})^{\perp}=\mathcal{W}^{\perp}$.
Consequenlty, $f-g=0$. Therefore, (i) holds.

In case that (ii) is satisfied we say that $\widehat{f} \in
\mathcal{V}$ is a \emph{consistent reconstruction} of $f \in
\mathcal{H}$ on $\mathcal{V}$ along $\mathcal{W}^{\perp}$.

From (i) and (ii), we deduce that if $f \in \mathcal{V}$ then
$\widehat{f}=f$. So in this case, $f$ can be perfectly
reconstructed.

The next result shows that consistent reconstruction is linked to
oblique projections.

\begin{thm}\label{T consistent reconstruction}
Let $\mathcal{W}$ and $\mathcal{V}$ be two closed subspaces of
$\mathcal{H}$ such that
$\mathcal{H}=\mathcal{V}\oplus\mathcal{W}^{\perp}$. Let
$(\mathbf{W},{\bf w})$ be a fusion frame for $\mathcal{W}$. Then
$\widehat{f} \in \mathcal{V}$ is a consistent reconstruction of $f
\in \mathcal{H}$ if and only if
$\widehat{f}=\pi_{\mathcal{V},\mathcal{W}^{\perp}}f$.
\end{thm}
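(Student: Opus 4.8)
The plan is to reduce both implications to a single observation already used in the discussion of requirement (i): for a fusion frame $(\mathbf{W},{\bf w})$ for $\mathcal{W}$ one has ${\rm Ker}(T_{\mathbf{W},{\bf w}}^{*})={\rm Im}(T_{\mathbf{W},{\bf w}})^{\perp}=\mathcal{W}^{\perp}$, since ${\rm Im}(T_{\mathbf{W},{\bf w}})=\mathcal{W}$. Consistency of $\widehat{f}$ means $T_{\mathbf{W},{\bf w}}^{*}\widehat{f}=T_{\mathbf{W},{\bf w}}^{*}f$, that is $T_{\mathbf{W},{\bf w}}^{*}(f-\widehat{f})=0$, which by the kernel computation is equivalent to $f-\widehat{f}\in\mathcal{W}^{\perp}$. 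Thus the whole statement reduces to showing: $\widehat{f}\in\mathcal{V}$ together with $f-\widehat{f}\in\mathcal{W}^{\perp}$ holds if and only if $\widehat{f}=\pi_{\mathcal{V},\mathcal{W}^{\perp}}f$.

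For the forward direction I would assume $\widehat{f}\in\mathcal{V}$ is a consistent reconstruction. Then $f-\widehat{f}\in\mathcal{W}^{\perp}$ by the observation above, so $f=\widehat{f}+(f-\widehat{f})$ is a decomposition of $f$ with $\widehat{f}\in\mathcal{V}$ and $f-\widehat{f}\in\mathcal{W}^{\perp}$. Since $\mathcal{H}=\mathcal{V}\oplus\mathcal{W}^{\perp}$ this decomposition is unique, and by the defining property of the oblique projection its $\mathcal{V}$-component is $\pi_{\mathcal{V},\mathcal{W}^{\perp}}f$; hence $\widehat{f}=\pi_{\mathcal{V},\mathcal{W}^{\perp}}f$.

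For the converse I would set $\widehat{f}=\pi_{\mathcal{V},\mathcal{W}^{\perp}}f$. Then $\widehat{f}\in{\rm Im}(\pi_{\mathcal{V},\mathcal{W}^{\perp}})=\mathcal{V}$, and $f-\widehat{f}=(I-\pi_{\mathcal{V},\mathcal{W}^{\perp}})f\in\mathcal{W}^{\perp}$, because $I-\pi_{\mathcal{V},\mathcal{W}^{\perp}}$ is the complementary oblique projection onto $\mathcal{W}^{\perp}$ along $\mathcal{V}$. Applying $T_{\mathbf{W},{\bf w}}^{*}$ and using ${\rm Ker}(T_{\mathbf{W},{\bf w}}^{*})=\mathcal{W}^{\perp}$ gives $T_{\mathbf{W},{\bf w}}^{*}(f-\widehat{f})=0$, i.e. $\widehat{f}$ is a consistent reconstruction.

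I do not expect a genuine obstacle here: the argument is essentially immediate once the kernel of the analysis operator is identified. The only points requiring care are invoking ${\rm Ker}(T_{\mathbf{W},{\bf w}}^{*})=\mathcal{W}^{\perp}$, which relies precisely on $(\mathbf{W},{\bf w})$ being a fusion frame for $\mathcal{W}$, and keeping straight that membership in $\mathcal{V}$ captures the \emph{image} of the oblique projection while the consistency condition captures its \emph{kernel} $\mathcal{W}^{\perp}$. The hypothesis $\mathcal{H}=\mathcal{V}\oplus\mathcal{W}^{\perp}$ is exactly what turns these two one-sided conditions into the unique identification of $\pi_{\mathcal{V},\mathcal{W}^{\perp}}f$.
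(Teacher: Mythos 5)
Your proposal is correct and follows essentially the same route as the paper's own proof: both directions hinge on the identification ${\rm Ker}(T_{\mathbf{W},{\bf w}}^{*})={\rm Im}(T_{\mathbf{W},{\bf w}})^{\perp}=\mathcal{W}^{\perp}$, the forward implication uses the uniqueness of the decomposition $f=\widehat{f}+(f-\widehat{f})$ in $\mathcal{V}\oplus\mathcal{W}^{\perp}$, and the converse uses that $f-\pi_{\mathcal{V},\mathcal{W}^{\perp}}f$ lies in $\mathcal{W}^{\perp}$. No gaps; the argument matches the paper's.
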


\begin{proof}
Suppose that $\widehat{f} \in \mathcal{V}$ is a consistent
reconstruction of $f \in \mathcal{W}$, i.e. (ii) holds. Then
$\widehat{f}-f \in {\rm Ker}(T_{\mathbf{W},{\bf w}}^{*})={\rm
Im}(T_{\mathbf{W},{\bf w}})^{\perp}=\mathcal{W}^{\perp}$. Thus
$f=\widehat{f}+(f-\widehat{f})$ where $\widehat{f} \in \mathcal{V}$
and $\widehat{f}-f \in \mathcal{W}^{\perp}$. Taking into account
that $\mathcal{H}=\mathcal{V}\oplus\mathcal{W}^{\perp}$, this
implies that $\widehat{f}=\pi_{\mathcal{V},\mathcal{W}^{\perp}}f$
and $\widehat{f}-f=\pi_{\mathcal{W}^{\perp},\mathcal{V}}f$.

Assume now that
$\widehat{f}=\pi_{\mathcal{V},\mathcal{W}^{\perp}}f$. Since
$\mathcal{H}=\mathcal{V}\oplus\mathcal{W}^{\perp}$, $\widehat{f}-f
=\pi_{\mathcal{W}^{\perp},\mathcal{V}}f \in \mathcal{W}^{\perp} =
{\rm Im}(T_{\mathbf{W},{\bf w}})^{\perp}={\rm
Ker}(T_{\mathbf{W},{\bf w}}^{*})$. Therefore $T_{\mathbf{W},{\bf
w}}^{*}(\widehat{f}-f)=0$ and (ii) holds.
\end{proof}

\subsection{Oblique dual fusion frames}

In \cite{Heineken-Morillas-Benavente-Zakowicz (2014),
Heineken-Morillas (2014)} the concepts of dual fusion frame and dual
fusion frames system are introduced and studied. Motivated by these
concepts and by Definition~\ref{D oblique dual frame} we introduce
now the definition of oblique dual fusion frame and later the one of
oblique dual fusion frame system.

\begin{defn}\label{D oblique fusion frame dual}
Let $\mathcal{W}$ and $\mathcal{V}$ be two closed subspaces of
$\mathcal{H}$ such that $\mathcal{H}=\mathcal{V}\oplus
\mathcal{W}^{\perp}$. Let $(\mathbf{W},{\bf w})$ be a fusion frame
for $\mathcal{W}$ and $({\bf V},{\bf v})$ be a fusion frame for
$\mathcal{V}$. We say that $(\mathbf{V},{\bf v})$ is an oblique dual
fusion frame of $(\mathbf{W},{\bf w})$ on $\mathcal{V}$ if there
exists $Q \in L(K_{\mathcal{W}}, K_{\mathcal{V}})$ such that
\begin{equation}\label{E TvQTw*=I}
T_{{\bf V},{\bf v}}QT^{*}_{\mathbf{W},{\bf
w}}=\pi_{\mathcal{V},\mathcal{W}^{\perp}}.
\end{equation}
\end{defn}
The operator $Q$ is actually important in the definition. If we need
to do an explicit reference to it we say that $({\bf V},{\bf v})$ is
a $Q$-oblique dual fusion frame of $(\mathbf{W},{\bf w})$. Note that
if $({\bf V},{\bf v})$ is a $Q$-oblique dual fusion frame of
$(\mathbf{W},{\bf w})$ on $\mathcal{V}$, then $({\bf W},{\bf w})$ is
a $Q^{*}$-oblique dual fusion frame of $(\mathbf{V},{\bf v})$ on
$\mathcal{W}$. As we will see in Lemma~\ref{L equivalencias}, Bessel
fusion sequences $(\mathbf{W},{\bf w})$ for $\mathcal{W}$ and $({\bf
V},{\bf v})$ for $\mathcal{V}$ that satisfy (\ref{E TvQTw*=I}), are
automatically fusion frames.

As a consequence of Definition~\ref{D oblique fusion frame dual} and
Theorem~\ref{T consistent reconstruction}, we obtain the following
result which establishes that oblique duality yields consistent
reconstruction.

\begin{cor}\label{C reconstruccion consistente sii dual oblicuo}
Let $\mathcal{W}$ and $\mathcal{V}$ be two closed subspaces of
$\mathcal{H}$ such that $\mathcal{H}=\mathcal{V}\oplus
\mathcal{W}^{\perp}$. Let $(\mathbf{W},{\bf w})$ be a fusion frame
for $\mathcal{W}$, $({\bf V},{\bf v})$ be a fusion frame for
$\mathcal{V}$ and $Q \in L(K_{\mathcal{W}}, K_{\mathcal{V}})$. Then
$\widehat{f}:=T_{{\bf V},{\bf v}}QT^{*}_{\mathbf{W},{\bf w}}f$ is a
consistent reconstruction of $f$ for all $f \in \mathcal{H}$ if and
only if $({\bf V},{\bf v})$ is a $Q$-oblique dual fusion frame of
$(\mathbf{W},{\bf w})$ on $\mathcal{V}$.
\end{cor}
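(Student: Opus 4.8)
The plan is to prove Corollary~\ref{C reconstruccion consistente sii dual oblicuo} by combining Theorem~\ref{T consistent reconstruction} with Definition~\ref{D oblique fusion frame dual}, exploiting the fact that the statement is essentially a pointwise reformulation of the operator identity~(\ref{E TvQTw*=I}). The key observation is that both sides of the desired equivalence are statements about the operator $T_{{\bf V},{\bf v}}QT^{*}_{\mathbf{W},{\bf w}}$, so the proof reduces to matching up the characterization of consistent reconstruction from Theorem~\ref{T consistent reconstruction} with the defining equation of a $Q$-oblique dual fusion frame.

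First I would address the forward direction. Assume that $\widehat{f}:=T_{{\bf V},{\bf v}}QT^{*}_{\mathbf{W},{\bf w}}f$ is a consistent reconstruction of $f$ for every $f \in \mathcal{H}$. One point to verify at the outset is that $\widehat{f}$ indeed lies in $\mathcal{V}$, which holds because ${\rm Im}(T_{{\bf V},{\bf v}}) = \mathcal{V}$ since $({\bf V},{\bf v})$ is a fusion frame for $\mathcal{V}$; this legitimizes applying Theorem~\ref{T consistent reconstruction}. By that theorem, consistency for each $f$ is equivalent to $\widehat{f} = \pi_{\mathcal{V},\mathcal{W}^{\perp}}f$, that is, $T_{{\bf V},{\bf v}}QT^{*}_{\mathbf{W},{\bf w}}f = \pi_{\mathcal{V},\mathcal{W}^{\perp}}f$ for all $f \in \mathcal{H}$. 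Since this holds for every $f$, the two bounded operators agree on all of $\mathcal{H}$, so $T_{{\bf V},{\bf v}}QT^{*}_{\mathbf{W},{\bf w}} = \pi_{\mathcal{V},\mathcal{W}^{\perp}}$, which is precisely~(\ref{E TvQTw*=I}); hence $({\bf V},{\bf v})$ is a $Q$-oblique dual fusion frame of $(\mathbf{W},{\bf w})$ on $\mathcal{V}$.

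For the converse direction I would simply reverse these steps. Assuming $({\bf V},{\bf v})$ is a $Q$-oblique dual fusion frame, equation~(\ref{E TvQTw*=I}) gives $T_{{\bf V},{\bf v}}QT^{*}_{\mathbf{W},{\bf w}} = \pi_{\mathcal{V},\mathcal{W}^{\perp}}$. Applying both sides to an arbitrary $f \in \mathcal{H}$ yields $\widehat{f} = \pi_{\mathcal{V},\mathcal{W}^{\perp}}f$, and again $\widehat{f} \in \mathcal{V}$ because ${\rm Im}(T_{{\bf V},{\bf v}}) = \mathcal{V}$. Theorem~\ref{T consistent reconstruction} then delivers that $\widehat{f}$ is a consistent reconstruction of $f$, and since $f$ was arbitrary this holds for all $f \in \mathcal{H}$.

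I do not anticipate a genuine obstacle here, since the result is a direct corollary; the only subtlety worth handling carefully is the passage between the pointwise statement (an identity for every $f$) and the operator identity~(\ref{E TvQTw*=I}), together with the membership $\widehat{f} \in \mathcal{V}$ needed to invoke Theorem~\ref{T consistent reconstruction}. Both are immediate once one notes that ${\rm Im}(T_{{\bf V},{\bf v}}) = \mathcal{V}$ and that equality of bounded operators is equivalent to their agreement on every vector.
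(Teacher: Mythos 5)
Your argument is correct and follows exactly the route the paper intends: the corollary is stated there as an immediate consequence of Definition~\ref{D oblique fusion frame dual} and Theorem~\ref{T consistent reconstruction}, which is precisely the combination you carry out, with the passage between the pointwise identity and the operator identity and the membership $\widehat{f}\in\mathcal{V}$ being the only (minor) points to check. Nothing further is needed.
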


It is worth to mention that one reason to introduce first a general
class of oblique dual fusion frames as in Definition~\ref{D oblique
fusion frame dual}, requiring only boundedness of the operator $Q$,
is to ask for the minimal conditions needed to obtain the different
desired properties for oblique dual fusion frames. In particular,
for this general class we have consistent reconstruction as
Corollary~\ref{C reconstruccion consistente sii dual oblicuo} shows
and the following lemma, which generalizes the basic properties that
are valid for dual and oblique dual frames. It is analogous to
\cite[Lemma 3.2]{Heineken-Morillas-Benavente-Zakowicz (2014)} and
gives equivalent conditions for two Bessel fusion sequences to be
oblique dual fusion frames.

\begin{lem}\label{L equivalencias}
Let $\mathcal{W}$ and $\mathcal{V}$ be two closed subspaces of
$\mathcal{H}$ such that $\mathcal{H}=\mathcal{V}\oplus
\mathcal{W}^{\perp} $. Let $(\mathbf{W}, {\bf w})$ be a Bessel
fusion sequence for $\mathcal{W}$, $(\mathbf{V}, {\bf v})$ be a
Bessel fusion sequence for $\mathcal{V}$, and let $Q \in
L(K_{\mathcal{W}}, K_{\mathcal{V}})$. Then the following statements
are equivalent:
\begin{enumerate}
  \item $T_{{\bf V},{\bf v}}Q T^{*}_{{\bf W},{\bf w}}f=f$   for all $f\in \mathcal{V}.$
  \item $T_{{\bf W},{\bf w}}Q^{*}T^{*}_{{\bf V},{\bf v}}f=f$ for all $f\in
  \mathcal{W}.$
  \item $\pi_{\mathcal{V},{\mathcal{W}}^\bot}f=T_{{\bf V},{\bf v}}Q T^{*}_{{\bf W},{\bf w}}f$ for all $f\in \mathcal{H}.$
  \item $\pi_{\mathcal{W},{\mathcal{V}}^\bot}f=T_{{\bf W},{\bf w}}Q^* T^{*}_{{\bf V},{\bf v}}f$ for all $f\in \mathcal{H}.$
  \item $\prode{\pi_{\mathcal{W},{\mathcal{V}}^\bot} f,g}=\prode{Q^*T^{*}_{{\bf V},{\bf v}}f,T_{{\bf W},{\bf w}}^{*}g}$
  for all $f, g \in \mathcal{H}$.
 \item $\prode{\pi_{\mathcal{V},{\mathcal{W}}^\bot} f,g}=\prode{QT^{*}_{{\bf W},{\bf w}}f,T_{{\bf V},{\bf v}}^{*}g}$
  for all $f, g \in \mathcal{H}$.

 \item $T^{*}_{{\bf W},{\bf w}}|_{\mathcal{V}}$ is injective, $T_{{\bf V},{\bf v}}Q$ is surjective and
  $\paren{T^{*}_{{\bf W},{\bf w}}T_{{\bf V},{\bf v}}Q}^{2}=T^{*}_{{\bf W},{\bf w}}T_{{\bf V},{\bf v}}Q$.
 \item $T^{*}_{{\bf V},{\bf v}}|_{\mathcal{W}}$ is injective, $T_{{\bf W},{\bf w}}Q^*$ is surjective and
  $\paren{T^{*}_{{\bf V},{\bf v}}T_{{\bf W},{\bf w}}Q^*}^{2}=T^{*}_{{\bf V},{\bf v}}T_{{\bf W},{\bf w}}Q^*$.
\end{enumerate}
In case any of these equivalent conditions are satisfied,
$(\mathbf{W}, {\bf w})$ is a fusion frame for $\mathcal{W}$,
$(\mathbf{V}, {\bf v})$ is a fusion frame for $\mathcal{V}$,
$(\mathbf{V}, {\bf v})$  is a $Q$-oblique dual fusion frame of
$(\mathbf{W}, {\bf w})$ on $\mathcal{V},$ and $(\mathbf{W}, {\bf
w})$ is an oblique $Q^{*}$-dual fusion frame of $(\mathbf{V}, {\bf
v})$ on $\mathcal{W}.$
\end{lem}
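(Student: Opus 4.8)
The plan is to organize the eight conditions around the single operator identity
$$T_{{\bf V},{\bf v}}Q T^{*}_{{\bf W},{\bf w}}=\pi_{\mathcal{V},\mathcal{W}^{\perp}},$$
which is condition (3) (equivalently, equation (\ref{E TvQTw*=I})), and to exploit the symmetry obtained by interchanging $(\mathbf{W},{\bf w})\leftrightarrow(\mathbf{V},{\bf v})$, $\mathcal{W}\leftrightarrow\mathcal{V}$ and $Q\leftrightarrow Q^{*}$. This symmetry sends (1) to (2), (3) to (4), (5) to (6) and (7) to (8); so it suffices to establish the equivalences (1)$\Leftrightarrow$(3), (3)$\Leftrightarrow$(6) and (1)$\Leftrightarrow$(7), together with the single self-symmetric bridge (3)$\Leftrightarrow$(4), and the rest follow for free. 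Throughout I write $P:=T_{{\bf V},{\bf v}}Q T^{*}_{{\bf W},{\bf w}}$ and use repeatedly that, since $(\mathbf{W},{\bf w})$ and $(\mathbf{V},{\bf v})$ are Bessel fusion sequences, ${\rm Im}(T_{{\bf W},{\bf w}})\subseteq\mathcal{W}$ and ${\rm Im}(T_{{\bf V},{\bf v}})\subseteq\mathcal{V}$, whence $\mathcal{W}^{\perp}\subseteq{\rm Ker}(T^{*}_{{\bf W},{\bf w}})$ and ${\rm Im}(P)\subseteq\mathcal{V}$.

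The elementary equivalences come first. For (3)$\Leftrightarrow$(6) I would simply move the adjoint $T_{{\bf V},{\bf v}}$ across the inner product, using $\prode{QT^{*}_{{\bf W},{\bf w}}f,T^{*}_{{\bf V},{\bf v}}g}=\prode{T_{{\bf V},{\bf v}}QT^{*}_{{\bf W},{\bf w}}f,g}$; the same manoeuvre gives (4)$\Leftrightarrow$(5). For (1)$\Leftrightarrow$(3): the implication (3)$\Rightarrow$(1) is immediate by restricting to $f\in\mathcal{V}$, where $\pi_{\mathcal{V},\mathcal{W}^{\perp}}f=f$; for (1)$\Rightarrow$(3) I would decompose an arbitrary $f=\pi_{\mathcal{V},\mathcal{W}^{\perp}}f+\pi_{\mathcal{W}^{\perp},\mathcal{V}}f$, note that $T^{*}_{{\bf W},{\bf w}}$ annihilates the second summand (as $\mathcal{W}^{\perp}\subseteq{\rm Ker}(T^{*}_{{\bf W},{\bf w}})$), and apply (1) to the first summand, which lies in $\mathcal{V}$. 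For the bridge (3)$\Leftrightarrow$(4) I would take adjoints of the identity in (3), using $\paren{T_{{\bf V},{\bf v}}QT^{*}_{{\bf W},{\bf w}}}^{*}=T_{{\bf W},{\bf w}}Q^{*}T^{*}_{{\bf V},{\bf v}}$ and the fact that the adjoint of $\pi_{\mathcal{V},\mathcal{W}^{\perp}}$ is $\pi_{\mathcal{W},\mathcal{V}^{\perp}}$ (its image is ${\rm Ker}(\pi_{\mathcal{V},\mathcal{W}^{\perp}})^{\perp}=\mathcal{W}$ and its kernel is ${\rm Im}(\pi_{\mathcal{V},\mathcal{W}^{\perp}})^{\perp}=\mathcal{V}^{\perp}$).

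The delicate step, and the one I expect to be the main obstacle, is (1)$\Leftrightarrow$(7). For (3)$\Rightarrow$(7): injectivity of $T^{*}_{{\bf W},{\bf w}}|_{\mathcal{V}}$ follows because $f\in\mathcal{V}$ with $T^{*}_{{\bf W},{\bf w}}f=0$ gives $f=Pf=0$; surjectivity of $T_{{\bf V},{\bf v}}Q$ onto $\mathcal{V}$ follows from $\mathcal{V}={\rm Im}(P)\subseteq{\rm Im}(T_{{\bf V},{\bf v}}Q)\subseteq\mathcal{V}$; and idempotency follows by inserting $P=\pi_{\mathcal{V},\mathcal{W}^{\perp}}$ into $\paren{T^{*}_{{\bf W},{\bf w}}T_{{\bf V},{\bf v}}Q}^{2}=T^{*}_{{\bf W},{\bf w}}\,P\,T_{{\bf V},{\bf v}}Q$ and using that $\pi_{\mathcal{V},\mathcal{W}^{\perp}}$ fixes ${\rm Im}(T_{{\bf V},{\bf v}}Q)\subseteq\mathcal{V}$. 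Conversely, for (7)$\Rightarrow$(1) the idempotency rewrites as $T^{*}_{{\bf W},{\bf w}}(P-I)T_{{\bf V},{\bf v}}Q=0$; surjectivity of $T_{{\bf V},{\bf v}}Q$ onto $\mathcal{V}$ then upgrades this to $T^{*}_{{\bf W},{\bf w}}(Pf-f)=0$ for every $f\in\mathcal{V}$, and since $Pf-f\in\mathcal{V}$, injectivity of $T^{*}_{{\bf W},{\bf w}}|_{\mathcal{V}}$ forces $Pf=f$, which is (1). The care needed here is in reading ``surjective'' as onto $\mathcal{V}$ (the image is automatically contained in $\mathcal{V}$) and in verifying that no property beyond the Bessel hypothesis is invoked.

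Finally, granting any of the equivalent conditions, I would read off the closing assertions from (3) and (4): since ${\rm Im}(T_{{\bf V},{\bf v}})\supseteq{\rm Im}(P)=\mathcal{V}$ and ${\rm Im}(T_{{\bf V},{\bf v}})\subseteq\mathcal{V}$, we get ${\rm Im}(T_{{\bf V},{\bf v}})=\mathcal{V}$, so $(\mathbf{V},{\bf v})$ is a fusion frame for $\mathcal{V}$; symmetrically (4) gives ${\rm Im}(T_{{\bf W},{\bf w}})=\mathcal{W}$ and hence $(\mathbf{W},{\bf w})$ is a fusion frame for $\mathcal{W}$. Condition (3) is precisely (\ref{E TvQTw*=I}), so $(\mathbf{V},{\bf v})$ is a $Q$-oblique dual fusion frame of $(\mathbf{W},{\bf w})$ on $\mathcal{V}$, and its symmetric partner (4) identifies $(\mathbf{W},{\bf w})$ as a $Q^{*}$-oblique dual fusion frame of $(\mathbf{V},{\bf v})$ on $\mathcal{W}$.
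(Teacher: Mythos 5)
Your proposal is correct, and its overall skeleton (adjoints for $(3)\Leftrightarrow(4)$, inner-product transposition for the weak formulations, and the injective/surjective/idempotent analysis for $(7)$) matches the paper's. There are two genuine differences worth noting. First, you systematically exploit the involution $(\mathbf{W},{\bf w})\leftrightarrow(\mathbf{V},{\bf v})$, $Q\leftrightarrow Q^{*}$ to derive $(2),(5),(8)$ from $(1),(6),(7)$, whereas the paper argues several of these implications separately (e.g.\ it proves $(5)\Rightarrow(4)$ and $(6)\Rightarrow(3)$ each by the nondegeneracy of the inner product); your route halves the casework, at the small cost that you should say explicitly that the swapped hypotheses hold, i.e.\ that $\mathcal{H}=\mathcal{V}\oplus\mathcal{W}^{\perp}$ is equivalent to $\mathcal{H}=\mathcal{W}\oplus\mathcal{V}^{\perp}$ (this is \cite[Lemma 2.1]{Christensen-Eldar (2004)}, quoted in the paper's preliminaries). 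Second, your proof of $(7)\Rightarrow(1)$ is more direct than the paper's: you factor the idempotency as $T^{*}_{{\bf W},{\bf w}}(P-I)T_{{\bf V},{\bf v}}Q=0$ with $P=T_{{\bf V},{\bf v}}QT^{*}_{{\bf W},{\bf w}}$ and then combine surjectivity onto $\mathcal{V}$ with injectivity of $T^{*}_{{\bf W},{\bf w}}|_{\mathcal{V}}$, while the paper passes through the algebraic decomposition $K_{\mathcal{W}}={\rm Ker}(T^{*}_{{\bf W},{\bf w}}T_{{\bf V},{\bf v}}Q)\oplus{\rm Im}(T^{*}_{{\bf W},{\bf w}}T_{{\bf V},{\bf v}}Q)$ induced by the idempotent and a representation of elements of $\mathcal{V}$. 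Both are valid; yours is shorter and avoids identifying ${\rm Ker}(T^{*}_{{\bf W},{\bf w}}T_{{\bf V},{\bf v}}Q)$ with ${\rm Ker}(T_{{\bf V},{\bf v}}Q)$. You also correctly flag the one interpretive point that the paper leaves implicit, namely that ``surjective'' in $(7)$ must be read as onto $\mathcal{V}$, which is how the paper itself uses it. The closing assertions are handled exactly as in the paper.
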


\begin{proof}

$(1) \Leftrightarrow (3)$ and $(2) \Leftrightarrow (3)$ are
immediate.

$(3) \Rightarrow (4): T_{{\bf V},{\bf v}}Q T^{*}_{{\bf W},{\bf
w}}=\pi_{\mathcal{V},{\mathcal{W}}^\bot}.$ Taking adjoint $T_{{\bf
W},{\bf w}}Q^* T^{*}_{{\bf V},{\bf
v}}=\pi_{\mathcal{V},{\mathcal{W}}^\bot}^*.$ But
$\pi_{\mathcal{V},{\mathcal{W}}^\bot}^*=\pi_{\mathcal{W},{\mathcal{V}}^\bot},$
hence $(3)$ follows.

$(4) \Rightarrow (5)$ is clear as well as $(3) \Rightarrow (6).$

$(5) \Rightarrow (4)$: For $f\in \mathcal{H},\,\,\,T_{{\bf W},{\bf
w}}Q^* T^{*}_{{\bf V},{\bf v}}f=T_{{\bf W},{\bf w}}Q^* T^{*}_{{\bf
V},{\bf v}}\pi_{\mathcal{V}}f$ is well defined since $(\mathbf{W},
{\bf w})$ is a Bessel fusion sequence for $\mathcal{W}$ and
$(\mathbf{V},{\bf v})$ is a Bessel fusion sequence for
$\mathcal{V}$. By $(5)$,

\centerline{$\prode{\pi_{\mathcal{W},{\mathcal{V}}^\bot} f-T_{{\bf
W},{\bf w}}Q^*T^{*}_{{\bf V},{\bf v}}f,g}=0\,\,\,\text{ for all } g
\in \mathcal{H},$}

\noindent and so we obtain $(4).$ Analogously $(6) \Rightarrow (3).$

$(1) \Leftrightarrow (7)$: By $(1)$, $T^{*}_{{\bf W},{\bf
w}}|_{\mathcal{V}}$ is injective, $T_{{\bf V},{\bf v}}Q$ is
surjective and

\centerline{$\paren{T^{*}_{{\bf W},{\bf w}}T_{{\bf V},{\bf
v}}Q}^{2}= T^{*}_{{\bf W},{\bf w}}\paren{T_{{\bf V},{\bf
v}}QT^{*}_{{\bf W},{\bf w}}}T_{{\bf V},{\bf v}}Q =T^{*}_{{\bf
W},{\bf w}}T_{{\bf V},{\bf v}}Q.$}

\noindent $(7) \Rightarrow (1)$: If $(T^{*}_{{\bf W},{\bf w}}T_{{\bf
V},{\bf v}}Q)^{2}=T^{*}_{{\bf W},{\bf w}}T_{{\bf V},{\bf v}}Q$ then
$K_{\mathcal{W}}={\rm Ker}(T^{*}_{{\bf W},{\bf w}}T_{{\bf V},{\bf
v}}Q)\oplus{\rm Im}(T^{*}_{{\bf W},{\bf w}}T_{{\bf V},{\bf v}}Q).$
Since $T^{*}_{{\bf W},{\bf w}}|_{\mathcal{V}}$ is injective we have
${\rm Ker}(T^{*}_{{\bf W},{\bf w}}T_{{\bf V},{\bf v}}Q)={\rm
Ker}(T_{{\bf V},{\bf v}}Q)$ and so $K_{\mathcal{W}}={\rm
Ker}(T_{{\bf V},{\bf v}}Q)\oplus{\rm Im}(T^{*}_{{\bf W},{\bf
w}}T_{{\bf V},{\bf v}}Q).$ Therefore, since ${\rm Im}(T_{{\bf
V},{\bf v}}Q)=\mathcal{V},$

\centerline{$\mathcal{V}=\set{T_{{\bf V},{\bf v}}Q\set{f_{i}}_{i \in
I}: \set{f_{i}}_{i \in I} \in {\rm Im}(T^{*}_{{\bf W},{\bf
w}}T_{{\bf V},{\bf v}}Q)}.$}

\noindent Let now $f \in \mathcal{V}$ with $f=T_{{\bf V},{\bf
v}}Q\set{f_{i}}_{i \in I}$ for some $\set{f_{i}}_{i \in I} \in {\rm
Im}(T^{*}_{{\bf W},{\bf w}}T_{{\bf V},{\bf v}}Q)$. Then

\centerline{$T_{{\bf V},{\bf v}}QT^{*}_{{\bf W},{\bf w}}f= T_{{\bf
V},{\bf v}}QT^{*}_{{\bf W},{\bf w}}T_{{\bf V},{\bf
v}}Q\set{f_{i}}_{i \in I}=T_{{\bf V},{\bf v}}Q\set{f_{i}}_{i \in
I}=f.$}

In a similar way it can be proved that $(3) \Leftrightarrow (8)$.

If $(1)$ is satisfied then $T_{{\bf V},{\bf v}}$  is onto and hence
$(\mathbf{V}, {\bf v})$ is a fusion frame for $\mathcal{V}$.
Similarly $(\mathbf{W}, {\bf w})$ is a a fusion frame for
$\mathcal{W}.$
\end{proof}

We will now present two special types of linear transformations $Q$
that make the reconstruction formula that follows from (\ref{E
TvQTw*=I}) simpler. In order to do that we need the selfadjoint
operator $M_{J,\mathbf{W}} : K_{\mathcal{W}} \rightarrow
K_{\mathcal{W}}, M_{J,\mathbf{W}}(f_i)_{i \in
I}=(\chi_{J}(i)f_{i})_{i \in I}.$ We just write $M_{J}$ if it clear
to which $\mathbf{W}$ we refer to. We use the notation
$M_{\{j\},\mathbf{W}}=M_{j,\mathbf{W}}$ and $M_{\{j\}}=M_{j}$.

\begin{defn}Let $Q \in L(K_{\mathcal{W}}, K_{\mathcal{V}})$.
\begin{enumerate}
\item If $QM_{j,\mathbf{W}}K_{\mathcal{W}}
\subseteq M_{j, {\bf V}}K_{\mathcal{V}}$ for each $j \in I,$ $Q$ is
called \emph{block-diagonal}.
\item If $QM_{j,\mathbf{W}}K_{\mathcal{W}}
= M_{j, {\bf V}}K_{\mathcal{V}}$ for each $j \in I,$ $Q$ is called
{\em component preserving}.
\end{enumerate}
\end{defn}

Note that $Q$ is block-diagonal if and only if
$QM_{J,\mathbf{W}}=M_{J,\mathbf{V}}Q$ for each $J \subseteq I$, or
equivalently, $QM_{j,\mathbf{W}}=M_{j, {\bf V}}Q$ for each $j \in
I$. Observe that if $Q$ is block-diagonal, then $Q^{*}$ is
block-diagonal. If in Definition~\ref{D oblique fusion frame dual}
$Q$ is block-diagonal (component preserving) we say that $({\bf
V},{\bf v})$ is a \emph{block-diagonal dual fusion frame}
(\emph{component preserving dual fusion frame}) of $(\mathbf{W},{\bf
w})$.

Another motivation for introducing the notion of oblique duality as
in Definition~\ref{D oblique fusion frame dual} is to obtain
flexibility, therefore asking for restrictions only when needed. The
general framework provided by Definition~\ref{D oblique fusion frame
dual} allows to adjust to the problem at hand. This is another
reason to start with the most general class and then naturally arise
the particular classes with which we work here: block-diagonal and
component preserving oblique dual fusion frames. As we will see in
Lemma~\ref{L Vi=ApiWj entonces V,v dual fusion frame}, $Q$ is
component preserving for oblique dual fusion frames obtained from
the oblique left inverses of $T^{*}_{\mathbf{W},{\bf w}}$. Also, $Q$
is block-diagonal for oblique dual fusion frame systems (see
Definition~\ref{D oblique dual fusion frame system} and
Remark~\ref{R Q Mi sum Wj subset Mi sum Vj}).

If $Q$ is block-diagonal, from (\ref{E TvQTw*=I}) we obtain the
following reconstruction formula:
\begin{equation}\label{E fcs}
\pi_{\mathcal{V},\mathcal{W}^{\perp}}f=\sum_{j \in
I}v_{j}w_{j}Q_{j}f~,~~\forall f \in \mathcal{H},\end{equation} where
$Q_{j} : \mathcal{H} \rightarrow V_{j}$ is given by
$Q_{j}f:=(QM_{j}(\pi_{W_{i}}f)_{i \in I})_{j}$. For each $j \in I$,
$Q_{j}$ is a bounded linear operator. Observe that $W_{j}^{\bot}
\subseteq \text{ker}(Q_{j})$ and we can recover the block-diagonal
(or component preserving) mapping $Q$ as $Q=\oplus_{j \in I}Q_{j}$.

Note that  $({\bf V},{\bf v})$ is a $Q$-oblique dual fusion frame of
$(\textbf{W},{\bf w})$ if and only if $({\bf V},{\bf c v})$ is a
$\frac{1}{\mathbf{c}}Q$-oblique dual fusion frame of
$(\textbf{W},{\bf w})$, where ${\bf c v}=\{c_{i}w_{i}\}_{i \in I}$
and $\frac{1}{\mathbf{c}}Q=\oplus_{j \in I}(\frac{1}{c_{j}}Q_{j})$,
with $0 < \inf_{i \in I}c_{i} \leq \sup_{i \in I}c_{i} < \infty$.
Both oblique dual fusion frames lead to the same reconstruction
formula. This freedom for the weights is desirable because we can
select those ${\bf v}$ such that the pair $({\bf V},{\bf v})$ is the
most suitable to treat simultaneously another problem not related
with the reconstruction formula.

We observe that in each term of (\ref{E fcs}) we can think the
product $v_{j}w_{j}||Q_{j}||$ as the weight that accompanies the
pair of subspaces $W_{j}$ and $V_{j},$ determining their importance
in the reconstruction.

\subsection{Oblique dual fusion frame systems}

We will define and study in this section the concept of oblique dual
fusion frame systems. In order to do that, we will need the
following operator, which we introduced in \cite{Heineken-Morillas
(2014)}, and which establishes the connection between the synthesis
operator of a fusion frame system and the synthesis operator of its
associated frame.

Let $(\mathbf{W}, {\bf w})$ be a Bessel fusion sequence for
$\mathcal{W}$ and $\mathcal{F}_{i}$ be a frame for $W_i$ with frame
bounds $\alpha_{i}, \beta_{i}$ for each $i \in I$ such that ${\rm
sup}_{i \in I}\beta_{i}= \beta < \infty$. Let

\centerline{$C_{\mathcal{F}}: \oplus_{i \in I}
\ell^2(L_{i})\rightarrow K_{\mathcal{W}},\,\,
C_{\mathcal{F}}((x_{i,l})_{l \in L_i})_{i \in
I}=(T_{\mathcal{F}_{i}}(x_{i,l})_{l \in L_i})_{i \in I}.$}

Note that $C_{\mathcal{F}}$ is a surjective bounded operator with
$||C_{\mathcal{F}}|| \leq \beta$. Its adjoint is
$C_{\mathcal{F}}^{*}: K_{\mathcal{W}} \rightarrow \oplus_{i \in I}
\ell^2(L_{i})$, given by $C_{\mathcal{F}}^{*}(g_{i})_{i \in
I}=(T_{\mathcal{F}_{i}}^{*}g_{i})_{i \in I}$ and satisfies
$||C_{\mathcal{F}}^{*}(g_{i})_{i \in I}|| \leq \beta||(g_{i})_{i \in
I}||$. If $0 < \alpha ={\rm inf}_{i \in I}\alpha_{i}$, we also have
$\alpha ||(g_{i})_{i \in I}|| \leq ||C_{\mathcal{F}}^{*}(g_{i})_{i
\in I}||$. The bounded left inverses of $C_{\mathcal{F}}^{*}$ are
all $C_{\widetilde{\mathcal{F}}} \in L(\oplus_{i \in I}
\ell^2(L_{i}), K_{\mathcal{W}})$ such that
$\widetilde{\mathcal{F}}_{i}$ is a dual frame of $\mathcal{F}_{i}$
with upper frame bound $\widetilde{\beta}_{i}$ for each $i \in I$
such that ${\rm sup}_{i \in I}\widetilde{\beta}_{i} < \infty$.
Observe that

\centerline{$T_{{\bf w}\mathcal{F}}=T_{\mathbf{W},{\bf
w}}C_{\mathcal{F}}\,\,\text { and }\,\,T_{\mathbf{W},{\bf
w}}=T_{{\bf w}\mathcal{F}}C_{\widetilde{\mathcal{F}}}^{*}.$}

We define oblique dual fusion frame systems as follows:
\begin{defn}\label{D oblique dual fusion frame system}
Let $\mathcal{W}$ and $\mathcal{V}$ be two closed subspaces of
$\mathcal{H}$ such that
$\mathcal{H}=\mathcal{V}\oplus\mathcal{W}^{\perp}$. Let
$(\mathbf{W},{\bf w}, \mathcal{F})$ be a fusion frame system for
$\mathcal{W}$ with upper local frame  bound $\beta_{i}$ for each $i
\in I$ such that ${\rm sup}_{i \in I}\beta_{i} < \infty$, $({\bf
V},{\bf v}, \mathcal{G})$ be a fusion frame system for $\mathcal{V}$
with local upper frame bound $\widetilde{\beta}_{i}$ for each $i \in
I$ such that ${\rm sup}_{i \in I}\widetilde{\beta}_{i} < \infty$ and
$|\mathcal{F}_{i}|=|\mathcal{G}_{i}|$ for each $i \in I$. Then
$({\bf V},{\bf v}, \mathcal{G})$ is an oblique dual fusion frame
system of $(\mathbf{W},{\bf w}, \mathcal{F})$ on $\mathcal{V}$ if
$({\bf V},{\bf v})$ is a
$C_{\mathcal{G}}C_{\mathcal{F}}^{*}$-oblique dual fusion frame of
$(\mathbf{W},{\bf w})$ on $\mathcal{V}$.
\end{defn}
\begin{rem}\label{R Q Mi sum Wj subset Mi sum Vj}
It is easy to see that the operator
$C_{\mathcal{G}}C_{\mathcal{F}}^{*}: K_{\mathcal{W}} \rightarrow
K_{\mathcal{V}},\, C_{\mathcal{G}}C_{\mathcal{F}}^{*}(f_{i})_{i \in
I}=(T_{\mathcal{G}_{i}}T_{\mathcal{F}_{i}}^{*}f_{i})_{i \in I}$ is
block-diagonal.
\end{rem}
If $C_{\mathcal{G}}C_{\mathcal{F}}^{*}$ in Definition~\ref{D oblique
dual fusion frame system}  is component preserving, we call $({\bf
V},{\bf v}, \mathcal{G})$ a \emph{component preserving oblique dual
fusion frame system} of $(\mathbf{W},{\bf w}, \mathcal{F}).$
\begin{rem}
If $\mathcal{W}=\mathcal{V}$ we have in Definition~\ref{D oblique
fusion frame dual} and in Definition~\ref{D oblique dual fusion
frame system} the concepts of dual fusion frame and dual fusion
frame system, respectively, considered in
\cite{Heineken-Morillas-Benavente-Zakowicz (2014), Heineken-Morillas
(2014)}. In this case, we simply say that $({\bf V},{\bf v})$ is a
$Q$- dual fusion frame of $(\mathbf{W},{\bf w})$ or that $({\bf
V},{\bf v}, \mathcal{G})$ is a dual fusion frame system of
$(\mathbf{W},{\bf w}, \mathcal{F})$.
\end{rem}

\section{Relation between block-diagonal oblique dual frames,
oblique dual fusion frame systems and oblique dual frames}

We defined oblique dual fusion frame systems in terms of
(block-diagonal) oblique dual fusion frames (see Definition~\ref{D
oblique dual fusion frame system} and Remark~\ref{R Q Mi sum Wj
subset Mi sum Vj}). Conversely, we can always associate to a
block-diagonal oblique dual fusion frame pair an oblique dual fusion
frame system pair. In order to see this we need the following two
auxiliary results.

\begin{lem}\label{L A=CGCF*}
If $A \in L(\mathcal{H}, \mathcal{K})$, then there exists a frame
$\mathcal{F}$ for $\mathcal{H}$ and a frame $\mathcal{G}$ for
$\mathcal{K}$ such that $|\mathcal{F}|=|\mathcal{G}|$ and
$A=T_{\mathcal{G}}T_{\mathcal{F}}^{*}$. We can choose $\mathcal{F}$
and $\mathcal{G}$ in such a way that their frame bounds are $1, 2$
and $1,1+||A||^{2}$, respectively.
\end{lem}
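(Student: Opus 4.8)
The plan is to produce the factorization $A = T_{\mathcal{G}}T_{\mathcal{F}}^{*}$ by starting from the ``obvious'' choice and then repairing it so that $\mathcal{G}$ becomes a frame for all of $\mathcal{K}$ rather than merely for $\overline{\mathrm{Im}(A)}$. I would fix an orthonormal basis $\{e_{i}\}_{i \in I_{1}}$ of $\mathcal{H}$ and an orthonormal basis $\{d_{k}\}_{k \in I_{2}}$ of $\mathcal{K}$ (both index sets countable, since the spaces are separable), set $I = I_{1} \sqcup I_{2}$, and define $\mathcal{F} = \{f_{n}\}_{n \in I}$ and $\mathcal{G} = \{g_{n}\}_{n \in I}$ by
\[
f_{i} = e_{i}, \quad g_{i} = A e_{i} \quad (i \in I_{1}), \qquad
f_{k} = 0, \quad g_{k} = d_{k} \quad (k \in I_{2}).
\]
Both families are indexed by the same set $I$, so $|\mathcal{F}| = |\mathcal{G}|$ holds automatically.

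For the factorization I would compute $T_{\mathcal{F}}^{*}f = (\langle f, e_{i}\rangle)_{i \in I_{1}} \oplus (0)_{k \in I_{2}}$ and then
\[
T_{\mathcal{G}}T_{\mathcal{F}}^{*}f
= \sum_{i \in I_{1}} \langle f, e_{i}\rangle A e_{i} + \sum_{k \in I_{2}} 0 \cdot d_{k}
= A\Big(\sum_{i \in I_{1}} \langle f, e_{i}\rangle e_{i}\Big) = Af,
\]
using that $\{e_{i}\}$ is an orthonormal basis of $\mathcal{H}$ together with the boundedness of $A$ to pull it outside the sum; hence $A = T_{\mathcal{G}}T_{\mathcal{F}}^{*}$. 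For the bounds, $\mathcal{F}$ simply inherits the Parseval property, since $\sum_{n \in I} |\langle f, f_{n}\rangle|^{2} = \sum_{i \in I_{1}} |\langle f, e_{i}\rangle|^{2} = \|f\|^{2}$, so $\mathcal{F}$ is a frame for $\mathcal{H}$ with bounds $1,1$, and in particular with the claimed bounds $1,2$. For $\mathcal{G}$ I would compute, for $g \in \mathcal{K}$,
\[
\sum_{n \in I} |\langle g, g_{n}\rangle|^{2}
= \sum_{i \in I_{1}} |\langle A^{*}g, e_{i}\rangle|^{2} + \sum_{k \in I_{2}} |\langle g, d_{k}\rangle|^{2}
= \|A^{*}g\|^{2} + \|g\|^{2},
\]
which lies in $[\|g\|^{2}, (1+\|A\|^{2})\|g\|^{2}]$ because $0 \le \|A^{*}g\|^{2} \le \|A\|^{2}\|g\|^{2}$; this gives $\mathcal{G}$ the frame bounds $1, 1+\|A\|^{2}$ and, in particular, shows $\mathcal{G}$ is Bessel so that $T_{\mathcal{G}}$ is a well-defined bounded operator.

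The one genuinely substantive point — and the reason for the otherwise puzzling zero vectors in $\mathcal{F}$ — is the lower frame bound for $\mathcal{G}$: the family $\{A e_{i}\}$ alone spans only $\overline{\mathrm{Im}(A)}$ and supplies no lower bound on $(\mathrm{Im}(A))^{\perp}$, so $\mathcal{G}$ would fail to be a frame for $\mathcal{K}$ whenever $A$ is not surjective. Appending the orthonormal basis $\{d_{k}\}$ of $\mathcal{K}$ repairs this and contributes exactly the $\|g\|^{2}$ term that produces the lower bound $1$, while appending the matching zero vectors to $\mathcal{F}$ is what keeps the product equal to $A$ rather than to a larger multiple. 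I expect the remaining verifications — boundedness of the two synthesis operators and the $\ell^{2}$-bookkeeping over $I = I_{1}\sqcup I_{2}$ — to be routine.
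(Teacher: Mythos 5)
Your proof is correct, and while it follows the same overall strategy as the paper's --- start from the tautological factorization of $A$ through a frame for $\mathcal{H}$ and a dual of it, then pad both families so that the $\mathcal{K}$-side becomes a frame for all of $\mathcal{K}$ while the padding on the $\mathcal{H}$-side contributes nothing to the product --- the two padding choices you make differ from the paper's in ways that matter. First, the paper augments $\mathcal{G}$ only by a frame for ${\rm Im}(A)^{\perp}$, whereas you append a full orthonormal basis of $\mathcal{K}$; this is precisely what makes the lower frame bound $1$ hold unconditionally, since the appended basis alone already contributes $\left\Vert g\right\Vert^{2}$ to the frame sum. With the paper's choice the quantity to be bounded below is $\left\Vert A^{*}g\right\Vert^{2}+\left\Vert \pi_{{\rm Im}(A)^{\perp}}g\right\Vert^{2}$, which for $g\in\overline{{\rm Im}(A)}$ reduces to $\left\Vert A^{*}g\right\Vert^{2}$ and need not dominate $\left\Vert g\right\Vert^{2}$ (if ${\rm Im}(A)$ is not closed it admits no positive lower bound at all); your construction sidesteps this and yields the stated bounds $1$ and $1+\left\Vert A\right\Vert^{2}$ for an arbitrary bounded $A$. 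Second, you pad $\mathcal{F}$ with zero vectors, an option the paper explicitly acknowledges before replacing it with a more elaborate nonzero family $\widetilde{\mathcal{G}}$ satisfying $T_{\mathcal{G}}T_{\widetilde{\mathcal{G}}}^{*}=0$; the zeros are perfectly admissible under the paper's definition of a frame, they keep $\mathcal{F}$ Parseval (so the bounds $1,2$ hold with room to spare), and they make the verification of $A=T_{\mathcal{G}}T_{\mathcal{F}}^{*}$ a one-line computation. In short, your route is more elementary than the printed one and, as far as the claimed frame bounds are concerned, more robust.
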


\begin{proof}
Let $\mathcal{F}$ be any frame for $\mathcal{H}$ and
$\widetilde{\mathcal{F}}$ be any dual frame of $\mathcal{F}$. Then
$A\widetilde{\mathcal{F}}$ is a frame for ${\rm Im}(A)$,
$|A\widetilde{\mathcal{F}}|=|\mathcal{F}|$ and
$A=T_{A\widetilde{\mathcal{F}}}T_{\mathcal{F}}^{*}$.

If ${\rm Im}(A)\neq \mathcal{K}$ let $\mathcal{G}=\{g_{i}\}_{i \in
J}$ be any frame for ${\rm Im}(A)^{\perp}$. If
$\widetilde{\mathcal{G}}$ is the family indexed by $J$ with all its
elements equal to the zero of $\mathcal{H}$, then
$T_{\mathcal{G}}T_{\widetilde{\mathcal{G}}}^{*}=0$. We can also
construct $\widetilde{\mathcal{G}}$ with not all of its elements
equal to zero that has this property. For this, we consider a frame
$\mathcal{G}=\{g_{j}\}_{j \in J}$ for ${\rm Im}(A)^{\perp}$ that is
not a basis. Let $\{c_{m}\}_{m \in \mathbb{M}}$ be an orthonormal
basis for ${\rm Ker}(T_{\mathcal{G}}) \subset \ell^{2}(J)$ where
$\mathbb{M}=\mathbb{N}$ or $\mathbb{M}=\{1, \ldots, M\}$. Let
$\{e_{l}\}_{l \in \mathbb{L}}$ be an orthonormal basis for
$\mathcal{H}$ where $\mathbb{L}=\mathbb{N}$ or $\mathbb{L}=\{1,
\ldots, L\}$. Let $\mathbb{I}$ any finite subset of $\mathbb{M} \cap
\mathbb{L}$ and $\widetilde{g}_{j}=\sum_{l \in
\mathbb{I}}\overline{c_{l}(j)}e_{l}$ for each $j \in J$. By the
linear independence of $\{e_{l}\}_{l \in \mathbb{L}}$ and
$\{c_{m}\}_{m \in \mathbb{M}}$, the vectors $\widetilde{g}_{j}$ can
not be all equal to $0$. We have $\sum_{j \in J}|\langle f,
\widetilde{g}_{j}\rangle|^{2} \leq \sum_{l \in \mathbb{I}}|\langle
f, e_{l}\rangle|^{2}\sum_{j \in J}|c_{l}(j)|^{2} \leq ||f||^{2}$.
Therefore, $\{\widetilde{g}_{j}\}_{j \in J}$ is a Bessel sequence
with Bessel bound $1$. Note that if $d \in \ell^{2}(J)$ and $f \in
\mathcal{H}$, then $\langle T_{\widetilde{\mathcal{G}}}d, f \rangle
= \langle \sum_{j \in J}d(j)\sum_{l \in
\mathbb{I}}\overline{c_{l}(j)}e_{l}, f\rangle = \sum_{j \in
J}d(j)\sum_{l \in \mathbb{I}}\overline{c_{l}(j)}\langle e_{l},
f\rangle = \langle d , \sum_{l \in \mathbb{I}}\langle f,
e_{l}\rangle c_{l}\rangle$, and then
$T_{\mathcal{G}}T_{\widetilde{\mathcal{G}}}^{*}f=\sum_{l \in
\mathbb{I}}\langle f, e_{l}\rangle T_{\mathcal{G}}c_{l}=0$ since
$c_{l} \in {\rm Ker}(T_{\mathcal{G}})$ for each $l \in \mathbb{I}$.

Finally, $\{\mathcal{F},\widetilde{\mathcal{G}}\}$ is a frame for
$\mathcal{H}$, $\{A\widetilde{\mathcal{F}},\mathcal{G}\}$ is a frame
for $\mathcal{K}$,
$|\{\mathcal{F},\widetilde{\mathcal{G}}\}|=|\{A\widetilde{\mathcal{F}},\mathcal{G}\}|$
and
$A=T_{\{A\widetilde{\mathcal{F}},\mathcal{G}\}}T_{\{\mathcal{F},\widetilde{\mathcal{G}}\}}^{*}$.

If we choose $\mathcal{F}$, $\widetilde{\mathcal{F}}$ and
$\mathcal{G}$ to be Parseval, the frame bounds of
$\{\mathcal{F},\widetilde{\mathcal{G}}\}$ and
$\{A\widetilde{\mathcal{F}},\mathcal{G}\}$ are $1, 2$ and
$1,1+||A||^{2}$, respectively.
\end{proof}

\begin{cor}\label{C Q=CGCF*}
If $Q \in L(K_{\mathcal{W}}, K_{\mathcal{V}})$ is block-diagonal
then there exists a frame $\mathcal{F}_{i}$ for $W_i$ with frame
bounds $\alpha_{i}, \beta_{i}$ for each $i \in I$, satisfying $ 0 <
{\rm inf}_{i \in I}\alpha_{i} \leq {\rm sup}_{i \in I}\beta_{i} <
\infty$, and a frame $\mathcal{G}_{i}$ for $V_i$ with
$|\mathcal{F}_{i}|=|\mathcal{G}_{i}|$ and frame bounds
$\widetilde{\alpha}_{i}, \widetilde{\beta}_{i}$ for each $i \in I$,
satisfying $ 0 < {\rm inf}_{i \in I}\widetilde{\alpha}_{i} \leq {\rm
sup}_{i \in I}\widetilde{\beta}_{i} < \infty$ such that
$Q=C_{\mathcal{G}}C_{\mathcal{F}}^{*}$.
\end{cor}

\begin{proof}
By Lemma~\ref{L A=CGCF*}, for each $i \in I$ there exists frames
$\mathcal{F}_{i}$ for $W_i$ with frame bounds $\alpha_{i}=1$,
$\beta_{i}=2$ and $\mathcal{G}_{i}$ for $V_i$ with frame bounds
$\widetilde{\alpha}_{i}=1$, $\widetilde{\beta}_{i}=1+||Q_{i}||^{2}$
such that $Q_{i}=T_{\mathcal{G}_{i}}T_{\mathcal{F}_{i}}^{*}$. Thus,
$Q=C_{\mathcal{G}}C_{\mathcal{F}}^{*}$. Moreover, $||Q_{i}|| \leq
||Q||$ for each $i \in I$.
\end{proof}

The next theorem asserts that a block-diagonal oblique dual fusion
frame pair can always be viewed as an oblique dual fusion frame
system pair.

\begin{thm}\label{T relacion entre dual y dual system}
Let $\mathcal{W}$ and $\mathcal{V}$ be two closed subspaces of
$\mathcal{H}$. Let $(\mathbf{W},{\bf w})$ be a fusion frame for
$\mathcal{W}$ and let $({\bf V},{\bf v})$ be a block diagonal
$Q$-oblique dual fusion frame of $(\mathbf{W},{\bf w})$ on
$\mathcal{V}$. Then there exists a frame $\mathcal{F}_{i}$ for $W_i$
with frame bounds $\alpha_{i}, \beta_{i}$ for each $i \in I$ such
that $0 < {\rm inf}_{i \in I}\alpha_{i} \leq {\rm sup}_{i \in
I}\beta_{i} < \infty$ and a frame $\mathcal{G}_{i}$ for $V_i$ with
frame bounds $\widetilde{\alpha}_{i}, \widetilde{\beta}_{i}$ for
each $i \in I$ such that $ 0 < {\rm inf}_{i \in
I}\widetilde{\alpha}_{i} \leq {\rm sup}_{i \in
I}\widetilde{\beta}_{i} < \infty$, such that $({\bf V},{\bf
v},\mathcal{G})$ is a dual fusion frame system of $({\bf W},{\bf
w},\mathcal{F})$ and $Q=C_{\mathcal{G}}C_{\mathcal{F}}^{*}$.
\end{thm}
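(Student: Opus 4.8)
Given a fusion frame $(\mathbf{W},{\bf w})$ for $\mathcal{W}$ and a block-diagonal $Q$-oblique dual fusion frame $({\bf V},{\bf v})$ of it on $\mathcal{V}$, produce local frames $\mathcal{F}_i$ for $W_i$ and $\mathcal{G}_i$ for $V_i$ with matching cardinalities and uniformly bounded frame bounds, such that $Q=C_{\mathcal{G}}C_{\mathcal{F}}^{*}$ and $({\bf V},{\bf v},\mathcal{G})$ is a dual fusion frame system of $({\bf W},{\bf w},\mathcal{F})$.

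Let me think about what needs to happen here.

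The hypothesis gives me a block-diagonal operator $Q$ and the oblique duality relation $T_{{\bf V},{\bf v}}QT^{*}_{\mathbf{W},{\bf w}}=\pi_{\mathcal{V},\mathcal{W}^{\perp}}$. I already have Corollary 4.3 (labeled \texttt{C Q=CGCF*} in the excerpt), which says precisely that a block-diagonal $Q$ factors as $Q=C_{\mathcal{G}}C_{\mathcal{F}}^{*}$ for suitable local frames with uniformly controlled bounds. So the construction of $\mathcal{F}$ and $\mathcal{G}$ is essentially handed to me; the real content is to verify that these factoring frames actually assemble into a dual fusion frame system.

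**Approach.** First I would invoke Corollary~\ref{C Q=CGCF*} to obtain frames $\mathcal{F}_{i}$ for $W_{i}$ and $\mathcal{G}_{i}$ for $V_{i}$, with $|\mathcal{F}_{i}|=|\mathcal{G}_{i}|$, with uniform bounds $0<\inf_i\alpha_i\le\sup_i\beta_i<\infty$ and $0<\inf_i\widetilde\alpha_i\le\sup_i\widetilde\beta_i<\infty$, and such that $Q=C_{\mathcal{G}}C_{\mathcal{F}}^{*}$. This immediately gives the last displayed conclusion $Q=C_{\mathcal{G}}C_{\mathcal{F}}^{*}$ and the cardinality condition for free.

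Next I must check that $(\mathbf{W},{\bf w},\mathcal{F})$ and $({\bf V},{\bf v},\mathcal{G})$ are genuine fusion frame systems satisfying the standing hypotheses of Definition~\ref{D oblique dual fusion frame system}. Since $(\mathbf{W},{\bf w})$ is a fusion frame for $\mathcal{W}$ and each $\mathcal{F}_{i}$ is a frame for $W_{i}$ with uniformly bounded local bounds, the pair $(\mathbf{W},{\bf w},\mathcal{F})$ is a fusion frame system by definition; the uniform upper bound $\sup_i\beta_i<\infty$ is exactly what the corollary supplies. Since $({\bf V},{\bf v})$ is a fusion frame for $\mathcal{V}$ (guaranteed by the oblique duality hypothesis, e.g.\ via Lemma~\ref{L equivalencias}) and the $\mathcal{G}_{i}$ are frames for $V_{i}$ with $\sup_i\widetilde\beta_i<\infty$, the pair $({\bf V},{\bf v},\mathcal{G})$ is likewise a fusion frame system. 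This verifies all the hypotheses in Definition~\ref{D oblique dual fusion frame system}.

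Finally I must confirm the duality. By Definition~\ref{D oblique dual fusion frame system}, $({\bf V},{\bf v},\mathcal{G})$ is an oblique dual fusion frame system of $(\mathbf{W},{\bf w},\mathcal{F})$ precisely when $({\bf V},{\bf v})$ is a $C_{\mathcal{G}}C_{\mathcal{F}}^{*}$-oblique dual fusion frame of $(\mathbf{W},{\bf w})$. But $C_{\mathcal{G}}C_{\mathcal{F}}^{*}=Q$ and $({\bf V},{\bf v})$ is by hypothesis a $Q$-oblique dual fusion frame of $(\mathbf{W},{\bf w})$, so the duality identity $T_{{\bf V},{\bf v}}\,C_{\mathcal{G}}C_{\mathcal{F}}^{*}\,T^{*}_{\mathbf{W},{\bf w}}=\pi_{\mathcal{V},\mathcal{W}^{\perp}}$ holds automatically. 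Since the excerpt considers the case $\mathcal{W}=\mathcal{V}$ separately (where it is called simply a dual fusion frame system), the statement as phrased reduces to substituting the factorization into the definition.

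**Where the work really sits.** The theorem is almost entirely bookkeeping once Corollary~\ref{C Q=CGCF*} is in hand; the only genuine obstacle is the uniform control of frame bounds, and that has already been dispatched inside the corollary's proof (via the estimate $\|Q_i\|\le\|Q\|$, which keeps $\widetilde\beta_i=1+\|Q_i\|^2$ bounded by $1+\|Q\|^2$). Thus the main thing I would be careful to state cleanly is that the uniform bounds produced by the corollary are exactly the integrability conditions $\sup_i\beta_i<\infty$ and $\sup_i\widetilde\beta_i<\infty$ required to form $C_{\mathcal{F}}$, $C_{\mathcal{G}}$ and to invoke Definition~\ref{D oblique dual fusion frame system}; no further estimate is needed.
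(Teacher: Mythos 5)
Your proposal is correct and follows exactly the route the paper takes: the paper's own proof is a one-line appeal to Definition~\ref{D oblique fusion frame dual}, Corollary~\ref{C Q=CGCF*} and Definition~\ref{D oblique dual fusion frame system}, which is precisely the argument you spell out (factor $Q=C_{\mathcal{G}}C_{\mathcal{F}}^{*}$ via the corollary, then substitute into the definition of oblique dual fusion frame system). Your additional care about the uniform frame bounds and the verification that both triples are genuine fusion frame systems is exactly the bookkeeping the paper leaves implicit.
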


\begin{proof}
It is a consequence of Definition~\ref{D oblique fusion frame dual},
Corollary~\ref{C Q=CGCF*} and Definition~\ref{D oblique dual fusion
frame system}.
\end{proof}

The following theorem establishes the connection between the notions
of oblique dual fusion frame system and oblique dual frame.

\begin{thm}\label{T dual fusion frame systems} Let $\mathcal{W}$
and $\mathcal{V}$ be two closed subspaces of $\mathcal{H}$ such that
$\mathcal{H}=\mathcal{V}\oplus\mathcal{W}^{\perp}$. Let
$(\mathbf{W},{\bf w}, \mathcal{F})$ be a Bessel fusion system for
$\mathcal{W}$ such that $\mathcal{F}_{i}$ has upper frame bound
$\beta_{i}$ for each $i \in I$ with ${\rm sup}_{i \in I}\beta_{i} <
\infty$, and let $({\bf V},{\bf v}, \mathcal{G})$ be a Bessel fusion
system for $\mathcal{V}$  such that $\mathcal{G}_{i}$ has upper
frame bound $\widetilde{\beta}_{i}$ for each $i \in I$ with ${\rm
sup}_{i \in I}\widetilde{\beta}_{i}< \infty$. If
$|\mathcal{F}_{i}|=|\mathcal{G}_{i}|$ for each $i \in I$ then the
following conditions are equivalent:
  \begin{enumerate}
    \item ${\bf w}\mathcal{F}$ and ${\bf v}\mathcal{G}$ are oblique dual frames for $\mathcal{H}.$
    \item  $(\mathbf{V}, {\bf v},\mathcal{G})$ is an oblique dual fusion frame system of $(\mathbf{W}, {\bf w},\mathcal{F})$ on $\mathcal{V}$.
  \end{enumerate}
\end{thm}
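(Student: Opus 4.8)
The key identities linking a fusion frame system to its associated frame are the two relations $T_{\mathbf{w}\mathcal{F}}=T_{\mathbf{W},\mathbf{w}}C_{\mathcal{F}}$ and $T_{\mathbf{W},\mathbf{w}}=T_{\mathbf{w}\mathcal{F}}C_{\widetilde{\mathcal{F}}}^{*}$ recorded just before Definition~\ref{D oblique dual fusion frame system}, together with the analogous identities for $(\mathbf{V},\mathbf{v},\mathcal{G})$. My plan is to rewrite the oblique-duality condition for the vectorial frames $\mathbf{w}\mathcal{F}$ and $\mathbf{v}\mathcal{G}$ as an identity between synthesis/analysis operators, and to substitute these factorizations so that it becomes exactly the condition $T_{\mathbf{V},\mathbf{v}}\,C_{\mathcal{G}}C_{\mathcal{F}}^{*}\,T_{\mathbf{W},\mathbf{w}}^{*}=\pi_{\mathcal{V},\mathcal{W}^{\perp}}$, which by Definition~\ref{D oblique fusion frame dual} and Definition~\ref{D oblique dual fusion frame system} is precisely statement~(2) with $Q=C_{\mathcal{G}}C_{\mathcal{F}}^{*}$.

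\textbf{Direction $(2)\Rightarrow(1)$.} Assume $(\mathbf{V},\mathbf{v},\mathcal{G})$ is an oblique dual fusion frame system, i.e.\ $T_{\mathbf{V},\mathbf{v}}C_{\mathcal{G}}C_{\mathcal{F}}^{*}T_{\mathbf{W},\mathbf{w}}^{*}=\pi_{\mathcal{V},\mathcal{W}^{\perp}}$. I would first note that, by Theorem~\ref{T wF marco sii WwF fusion frame system}, the hypotheses guarantee that $\mathbf{w}\mathcal{F}$ is a frame for $\mathcal{W}$ and $\mathbf{v}\mathcal{G}$ is a frame for $\mathcal{V}$, so Definition~\ref{D oblique dual frame} applies. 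Using $C_{\mathcal{F}}^{*}T_{\mathbf{W},\mathbf{w}}^{*}=(T_{\mathbf{W},\mathbf{w}}C_{\mathcal{F}})^{*}=T_{\mathbf{w}\mathcal{F}}^{*}$ and $T_{\mathbf{V},\mathbf{v}}C_{\mathcal{G}}=T_{\mathbf{v}\mathcal{G}}$, the left-hand side collapses to $T_{\mathbf{v}\mathcal{G}}T_{\mathbf{w}\mathcal{F}}^{*}$, so the duality identity becomes $T_{\mathbf{v}\mathcal{G}}T_{\mathbf{w}\mathcal{F}}^{*}=\pi_{\mathcal{V},\mathcal{W}^{\perp}}$, which is exactly statement~(1).

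\textbf{Direction $(1)\Rightarrow(2)$.} Conversely, assume $T_{\mathbf{v}\mathcal{G}}T_{\mathbf{w}\mathcal{F}}^{*}=\pi_{\mathcal{V},\mathcal{W}^{\perp}}$. Here I would substitute the second factorization $T_{\mathbf{W},\mathbf{w}}=T_{\mathbf{w}\mathcal{F}}C_{\widetilde{\mathcal{F}}}^{*}$ (and its analogue for $\mathbf{V}$) into the target expression $T_{\mathbf{V},\mathbf{v}}C_{\mathcal{G}}C_{\mathcal{F}}^{*}T_{\mathbf{W},\mathbf{w}}^{*}$ and try to reduce it to $T_{\mathbf{v}\mathcal{G}}T_{\mathbf{w}\mathcal{F}}^{*}$. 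The main obstacle is that the cleanest substitution runs the other way: the natural computation uses $T_{\mathbf{w}\mathcal{F}}=T_{\mathbf{W},\mathbf{w}}C_{\mathcal{F}}$, which gives $T_{\mathbf{v}\mathcal{G}}T_{\mathbf{w}\mathcal{F}}^{*}=T_{\mathbf{V},\mathbf{v}}C_{\mathcal{G}}C_{\mathcal{F}}^{*}T_{\mathbf{W},\mathbf{w}}^{*}$ directly, so in fact $(1)$ and $(2)$ are simply two ways of writing the same operator identity and the equivalence is an algebraic bookkeeping once both sides are expressed through $C_{\mathcal{F}},C_{\mathcal{G}}$. Thus the only genuine content beyond this identity is checking that the frame/Bessel hypotheses are the ones required by each definition, which is handled by Theorem~\ref{T wF marco sii WwF fusion frame system} as above, and by the constraint $|\mathcal{F}_{i}|=|\mathcal{G}_{i}|$ that makes $C_{\mathcal{G}}C_{\mathcal{F}}^{*}$ well defined as a block-diagonal operator (Remark~\ref{R Q Mi sum Wj subset Mi sum Vj}).
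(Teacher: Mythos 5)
Your proposal follows the same route as the paper's proof: the whole content is the operator identity $T_{\mathbf{v}\mathcal{G}}T_{\mathbf{w}\mathcal{F}}^{*}=T_{\mathbf{V},\mathbf{v}}C_{\mathcal{G}}C_{\mathcal{F}}^{*}T_{\mathbf{W},\mathbf{w}}^{*}$, which you correctly obtain from $T_{\mathbf{w}\mathcal{F}}=T_{\mathbf{W},\mathbf{w}}C_{\mathcal{F}}$ and its analogue for $\mathcal{G}$, and which shows that the equations in (1) and (2) are literally the same equation; the obstacle you describe in the direction $(1)\Rightarrow(2)$ is not real, for exactly the reason you then notice yourself: no second factorization is needed, the single identity serves both directions.

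The one step that is not justified as written is the claim that ``by Theorem~\ref{T wF marco sii WwF fusion frame system} the hypotheses guarantee that $\mathbf{w}\mathcal{F}$ is a frame for $\mathcal{W}$ and $\mathbf{v}\mathcal{G}$ is a frame for $\mathcal{V}$.'' The hypotheses only make $(\mathbf{W},\mathbf{w},\mathcal{F})$ and $(\mathbf{V},\mathbf{v},\mathcal{G})$ Bessel fusion systems with uniformly bounded \emph{upper} local bounds, so Theorem~\ref{T wF marco sii WwF fusion frame system} only yields that $\mathbf{w}\mathcal{F}$ and $\mathbf{v}\mathcal{G}$ are Bessel sequences (the frame equivalence in that theorem also requires the uniform lower local bound $\inf_{i\in I}\alpha_{i}>0$, which is not assumed here). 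The frame, respectively fusion frame, properties demanded by Definition~\ref{D oblique dual frame} and Definition~\ref{D oblique dual fusion frame system} must instead be extracted from the duality relation itself: Bessel sequences satisfying $T_{\mathbf{v}\mathcal{G}}T_{\mathbf{w}\mathcal{F}}^{*}=\pi_{\mathcal{V},\mathcal{W}^{\perp}}$ are automatically frames for $\mathcal{W}$ and $\mathcal{V}$ (this is \cite[Lemma 6.3.2]{Christensen (2016)}, or simply the observation that the relation forces ${\rm Im}(T_{\mathbf{v}\mathcal{G}})=\mathcal{V}$ and ${\rm Im}(T_{\mathbf{w}\mathcal{F}})=\mathcal{W}$), and Bessel fusion sequences satisfying $T_{\mathbf{V},\mathbf{v}}QT_{\mathbf{W},\mathbf{w}}^{*}=\pi_{\mathcal{V},\mathcal{W}^{\perp}}$ are automatically fusion frames by the last part of Lemma~\ref{L equivalencias}. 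With these two citations in place of your appeal to Theorem~\ref{T wF marco sii WwF fusion frame system} for frame-ness, your argument coincides with the paper's.
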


\begin{proof}
It follows from Theorem~\ref{T wF marco sii WwF fusion frame
system}, the equality $T_{{\bf v}\mathcal{G}}T_{{\bf
w}\mathcal{F}}^{*}=T_{{\bf V},{\bf
v}}C_{\mathcal{G}}C_{\mathcal{F}}^{*}T_{\mathbf{W},{\bf w}}^{*}$,
\cite[Lemma 6.3.2]{Christensen (2016)} and Lemma~\ref{L
equivalencias}.
\end{proof}


\section{Duals and oblique duals}

Oblique duality is a generalization of duality. The results in this
section provide methods to obtain dual fusion frames from oblique
dual fusion frmaes and viceversa.

\begin{prop}\label{P oblique dual system entonces dual system}
Let $\mathcal{W}$ and $\mathcal{V}$ be two closed subspaces of
$\mathcal{H}$ such that
$\mathcal{H}=\mathcal{V}\oplus\mathcal{W}^{\perp}$. Let
$(\mathbf{W},{\bf w}, \mathcal{F})$ be a fusion frame system for
$\mathcal{W}$ with local upper frame bounds $\beta_{i}$ for each $i
\in I$ such that ${\rm sup}_{i \in I}\beta_{i} < \infty$, $({\bf
V},{\bf v}, \mathcal{G})$ be a fusion frame system for $\mathcal{V}$
with local upper frame bounds $\widetilde{\beta}_{i}$ for each $i
\in I$ such that ${\rm sup}_{i \in I}\widetilde{\beta}_{i} < \infty$
and $|\mathcal{F}_{i}|=|\mathcal{G}_{i}|$ for each $i \in I$. If
$({\bf V},{\bf v}, \mathcal{G})$  is an oblique dual fusion frame
system of $(\mathbf{W},{\bf w}, \mathcal{F})$ on $\mathcal{V}$, then
$(\pi_{\mathcal{W}}(\mathbf{V}),{\bf
v},\pi_{\mathcal{W}}(\mathcal{G}))$
 is a dual fusion frame system of
$(\mathbf{W},{\bf w}, \mathcal{F})$  for $\mathcal{W}$ and
$(\pi_{\mathcal{V}}(\mathbf{W}),{\bf
w},\pi_{\mathcal{V}}(\mathcal{F}))$ is a dual fusion frame system of
 $({\bf V},{\bf v}, \mathcal{G})$ for $\mathcal{V}.$
\end{prop}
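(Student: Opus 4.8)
The plan is to verify directly the defining operator equation of a dual fusion frame system and then use the general equivalences to promote Bessel fusion sequences to fusion frames. The geometric fact underpinning everything is that the restriction $\pi_{\mathcal{W}}|_{\mathcal{V}}:\mathcal{V}\to\mathcal{W}$ is a bounded isomorphism. Injectivity holds because $\mathcal{H}=\mathcal{V}\oplus\mathcal{W}^{\perp}$ forces $\mathcal{V}\cap\mathcal{W}^{\perp}=\{0\}$, and surjectivity because $\pi_{\mathcal{W}}\pi_{\mathcal{V},\mathcal{W}^{\perp}}w=\pi_{\mathcal{W}}w=w$ for $w\in\mathcal{W}$ by Lemma~\ref{L composicion proyeccion oblicua con ortogonal}(2); the open mapping theorem then gives a bounded inverse. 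Consequently each $\pi_{\mathcal{W}}(V_{i})$ is closed, each $\pi_{\mathcal{W}}(\mathcal{G}_{i})=\{\pi_{\mathcal{W}}g_{i,l}\}_{l\in L_{i}}$ is a frame for $\pi_{\mathcal{W}}(V_{i})$ (image of a frame under an isomorphism) with $|\pi_{\mathcal{W}}(\mathcal{G}_{i})|=|\mathcal{G}_{i}|=|\mathcal{F}_{i}|$ and upper bound at most $\widetilde{\beta}_{i}$ since $\pi_{\mathcal{W}}$ is a contraction, and the global frame ${\bf v}\pi_{\mathcal{W}}(\mathcal{G})=\pi_{\mathcal{W}}({\bf v}\mathcal{G})$ is a frame for $\mathcal{W}$.

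Next I would establish the reconstruction identity. Using $T_{{\bf v}\mathcal{G}}=T_{{\bf V},{\bf v}}C_{\mathcal{G}}$, the identity $T_{\pi_{\mathcal{W}}(\mathbf{V}),{\bf v}}C_{\pi_{\mathcal{W}}(\mathcal{G})}=T_{{\bf v}\pi_{\mathcal{W}}(\mathcal{G})}$, and $T_{{\bf v}\pi_{\mathcal{W}}(\mathcal{G})}=\pi_{\mathcal{W}}T_{{\bf v}\mathcal{G}}$, I compute
\[
T_{\pi_{\mathcal{W}}(\mathbf{V}),{\bf v}}\,C_{\pi_{\mathcal{W}}(\mathcal{G})}C_{\mathcal{F}}^{*}\,T^{*}_{\mathbf{W},{\bf w}}
=\pi_{\mathcal{W}}\,T_{{\bf V},{\bf v}}C_{\mathcal{G}}C_{\mathcal{F}}^{*}T^{*}_{\mathbf{W},{\bf w}}
=\pi_{\mathcal{W}}\,\pi_{\mathcal{V},\mathcal{W}^{\perp}}
=\pi_{\mathcal{W}},
\]
where the middle equality is the hypothesis that $({\bf V},{\bf v})$ is a $C_{\mathcal{G}}C_{\mathcal{F}}^{*}$-oblique dual fusion frame of $(\mathbf{W},{\bf w})$ and the last is Lemma~\ref{L composicion proyeccion oblicua con ortogonal}(2). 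This is exactly the $C_{\pi_{\mathcal{W}}(\mathcal{G})}C_{\mathcal{F}}^{*}$-dual fusion frame equation for $(\pi_{\mathcal{W}}(\mathbf{V}),{\bf v})$ and $(\mathbf{W},{\bf w})$ in the case $\mathcal{V}=\mathcal{W}$.

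To finish the first assertion I need the two families to be genuine fusion frame systems; that $(\mathbf{W},{\bf w},\mathcal{F})$ is one is assumed. For $(\pi_{\mathcal{W}}(\mathbf{V}),{\bf v},\pi_{\mathcal{W}}(\mathcal{G}))$ I first observe it is a Bessel fusion system for $\mathcal{W}$: the local frames $\pi_{\mathcal{W}}(\mathcal{G}_{i})$ have uniformly bounded upper bounds and the global frame $\pi_{\mathcal{W}}({\bf v}\mathcal{G})$ is Bessel for $\mathcal{W}$, so the Bessel part of Theorem~\ref{T wF marco sii WwF fusion frame system} yields that $(\pi_{\mathcal{W}}(\mathbf{V}),{\bf v})$ is a Bessel fusion sequence. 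Since both $(\mathbf{W},{\bf w})$ and $(\pi_{\mathcal{W}}(\mathbf{V}),{\bf v})$ are Bessel fusion sequences for $\mathcal{W}$ and the displayed equation is condition $(3)$ of Lemma~\ref{L equivalencias} with $\mathcal{V}=\mathcal{W}$, that lemma promotes both to fusion frames and shows $(\pi_{\mathcal{W}}(\mathbf{V}),{\bf v})$ is a $C_{\pi_{\mathcal{W}}(\mathcal{G})}C_{\mathcal{F}}^{*}$-dual fusion frame of $(\mathbf{W},{\bf w})$; together with $|\pi_{\mathcal{W}}(\mathcal{G}_{i})|=|\mathcal{F}_{i}|$ this is precisely the assertion that $(\pi_{\mathcal{W}}(\mathbf{V}),{\bf v},\pi_{\mathcal{W}}(\mathcal{G}))$ is a dual fusion frame system of $(\mathbf{W},{\bf w},\mathcal{F})$.

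The second assertion is the mirror image: by the remark following Definition~\ref{D oblique fusion frame dual}, $(\mathbf{W},{\bf w})$ is a $C_{\mathcal{F}}C_{\mathcal{G}}^{*}$-oblique dual of $({\bf V},{\bf v})$ on $\mathcal{W}$, i.e. $(\mathbf{W},{\bf w},\mathcal{F})$ is an oblique dual fusion frame system of $({\bf V},{\bf v},\mathcal{G})$ on $\mathcal{W}$, so the same argument with the roles of $(\mathcal{W},\mathbf{W},\mathcal{F})$ and $(\mathcal{V},\mathbf{V},\mathcal{G})$ interchanged—now using that $\pi_{\mathcal{V}}|_{\mathcal{W}}:\mathcal{W}\to\mathcal{V}$ is an isomorphism and $\pi_{\mathcal{V}}\pi_{\mathcal{W},\mathcal{V}^{\perp}}=\pi_{\mathcal{V}}$—gives the claim. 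The step I expect to require the most care is the fusion-frame-system bookkeeping for the projected families: checking that $\pi_{\mathcal{W}}(V_{i})$ is closed, that each $\pi_{\mathcal{W}}(\mathcal{G}_{i})$ is a frame for it with uniformly controlled bounds, and that the projected weighted system is Bessel. All of these hinge on the isomorphism $\pi_{\mathcal{W}}|_{\mathcal{V}}$ and on Theorem~\ref{T wF marco sii WwF fusion frame system}; the operator identity itself is immediate.
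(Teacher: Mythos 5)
Your proposal is correct and follows essentially the same route as the paper's proof: the same operator computation $T_{\pi_{\mathcal{W}}(\mathbf{V}),{\bf v}}C_{\pi_{\mathcal{W}}(\mathcal{G})}C_{\mathcal{F}}^{*}T^{*}_{\mathbf{W},{\bf w}}=\pi_{\mathcal{W}}\pi_{\mathcal{V},\mathcal{W}^{\perp}}=\pi_{\mathcal{W}}$, the same promotion of the projected families to frames for the projected subspaces (you derive the isomorphism $\pi_{\mathcal{W}}|_{\mathcal{V}}$ directly where the paper cites Proposition 5.3.1 of Christensen), and the same final appeal to the duality equivalences (Lemma~\ref{L equivalencias} in the case $\mathcal{V}=\mathcal{W}$, which is the paper's cited Lemma 3.2 of Heineken--Morillas--Benavente--Zakowicz). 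The differences are only in which references are invoked, not in the argument itself.
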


\begin{proof}
Since $\mathcal{V} \cap \mathcal{W}^{\perp} = \{0\}$, if $f \in
\mathcal{V}$ is such that $\pi_{\mathcal{W}}f=0$, then $f=0$. So,
$\pi_{\mathcal{W}}(\mathcal{V})\neq\{0\}$. By \cite[Proposition
5.3.1]{Christensen (2016)}, $\pi_{\mathcal{W}}(\mathcal{G}_i)$ is a
frame for $\pi_{\mathcal{W}}(V_i)$ with upper frame bound
$\widetilde{\beta}_{i}$ for each $i \in I$ and
$\pi_{\mathcal{W}}(\mathcal{G})$ is a frame for
$\pi_{\mathcal{W}}(\mathcal{V})$. By Theorem~\ref{T wF marco sii WwF
fusion frame system}, $(\pi_{\mathcal{W}}(\mathbf{V}),{\bf
v},\pi_{\mathcal{W}}(\mathcal{G}))$ is a Bessel fusion sequence for
$\pi_{\mathcal{W}}(\mathcal{V})$. We have $T_{{\bf V},{\bf
v}}C_{\mathcal{G}}C_{\mathcal{F}}^{*}T^{*}_{{\bf W},{\bf
w}}=\pi_{\mathcal{V},{\mathcal{W}}^\bot}$. Also
$T_{\pi_{\mathcal{W}}(\mathbf{V}),{\bf
v}}C_{\pi_{\mathcal{W}}(\mathcal{G})}=\pi_{\mathcal{W}}T_{{\bf
v}\mathcal{G}}.$ Hence,

\centerline{$T_{\pi_{\mathcal{W}}(\mathbf{V}),{\bf
v}}C_{\pi_{\mathcal{W}}(\mathcal{G})}C_{\mathcal{F}}^{*}T^{*}_{{\bf
W},{\bf w}}=\pi_{\mathcal{W}}T_{{\bf
v}\mathcal{G}}C_{\mathcal{F}}^{*}T^{*}_{{\bf W},{\bf
w}}=\pi_{\mathcal{W}}T_{{\bf V},{\bf
v}}C_{\mathcal{G}}C_{\mathcal{F}}^{*}T^{*}_{{\bf W},{\bf
w}}=\pi_{\mathcal{W}}\pi_{\mathcal{V},{\mathcal{W}}^\bot}=\pi_{\mathcal{W}}.$}

By \cite[Lemma 3.2]{Heineken-Morillas-Benavente-Zakowicz (2014)},
$(\pi_{\mathcal{W}}(\mathbf{V}),{\bf
v},\pi_{\mathcal{W}}(\mathcal{G}))$
 is a dual fusion frame system of
$(\mathbf{W},{\bf w}, \mathcal{F})$  for $\mathcal{W}$.

The other assertion is proved in a similar way.
\end{proof}
By Theorem~\ref{T relacion entre dual y dual system} and
Definition~\ref{D oblique dual fusion frame system} we obtain the
following Corollary:
\begin{cor}\label{C oblique dual entonces dual}
Let $\mathcal{W}$ and $\mathcal{V}$ be two closed subspaces of
$\mathcal{H}$ such that
$\mathcal{H}=\mathcal{V}\oplus\mathcal{W}^{\perp}$. If $({\bf
V},{\bf v})$  is a block-diagonal $\oplus_{i \in I}Q_{i}$-oblique
dual fusion frame of $(\mathbf{W},{\bf w})$ on $\mathcal{V}$, then
$(\pi_{\mathcal{W}}(\mathbf{V}),{\bf v})$
 is a  $\oplus_{i \in
I}\pi_{\mathcal{W}}Q_{i}$-block-diagonal dual fusion frame of
$(\mathbf{W},{\bf w})$ for $\mathcal{W}$ and
$(\pi_{\mathcal{V}}(\mathbf{W}),{\bf w})$ is a $\oplus_{i \in
I}\pi_{\mathcal{V}}Q_{i}$-block-diagonal dual fusion frame of
 $({\bf V},{\bf v})$ for $\mathcal{V}.$
\end{cor}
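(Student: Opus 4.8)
The plan is to derive Corollary~\ref{C oblique dual entonces dual} directly from Theorem~\ref{T relacion entre dual y dual system} by specializing the general block-diagonal statement and then applying Proposition~\ref{P oblique dual system entonces dual system}. The starting point is that $({\bf V},{\bf v})$ is a block-diagonal $\oplus_{i \in I}Q_{i}$-oblique dual fusion frame of $(\mathbf{W},{\bf w})$ on $\mathcal{V}$. By Theorem~\ref{T relacion entre dual y dual system}, there exist local frames $\mathcal{F}_{i}$ for $W_i$ and $\mathcal{G}_{i}$ for $V_i$ with uniformly bounded frame bounds such that $({\bf V},{\bf v},\mathcal{G})$ is an oblique dual fusion frame system of $({\bf W},{\bf w},\mathcal{F})$ and, crucially, $\oplus_{i \in I}Q_{i}=Q=C_{\mathcal{G}}C_{\mathcal{F}}^{*}$. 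This realizes the abstract block-diagonal operator as a genuine system operator, which is exactly the hypothesis needed to invoke the system-level result.

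\textbf{The main step} is then to apply Proposition~\ref{P oblique dual system entonces dual system} to this system. That proposition yields that $(\pi_{\mathcal{W}}(\mathbf{V}),{\bf v},\pi_{\mathcal{W}}(\mathcal{G}))$ is a dual fusion frame system of $(\mathbf{W},{\bf w},\mathcal{F})$ for $\mathcal{W}$, and symmetrically that $(\pi_{\mathcal{V}}(\mathbf{W}),{\bf w},\pi_{\mathcal{V}}(\mathcal{F}))$ is a dual fusion frame system of $({\bf V},{\bf v},\mathcal{G})$ for $\mathcal{V}$. Unwinding Definition~\ref{D oblique dual fusion frame system} in the case $\mathcal{W}=\mathcal{V}$ (i.e.\ for ordinary dual fusion frame systems), the associated block-diagonal operator for the first system is $C_{\pi_{\mathcal{W}}(\mathcal{G})}C_{\mathcal{F}}^{*}$. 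The task is to identify this operator componentwise. Since the $i$-th component of $C_{\mathcal{G}}C_{\mathcal{F}}^{*}$ is $T_{\mathcal{G}_i}T_{\mathcal{F}_i}^{*}=Q_i$ (by Remark~\ref{R Q Mi sum Wj subset Mi sum Vj}), and since $T_{\pi_{\mathcal{W}}(\mathcal{G}_i)}=\pi_{\mathcal{W}}T_{\mathcal{G}_i}$, the $i$-th component of $C_{\pi_{\mathcal{W}}(\mathcal{G})}C_{\mathcal{F}}^{*}$ is $\pi_{\mathcal{W}}T_{\mathcal{G}_i}T_{\mathcal{F}_i}^{*}=\pi_{\mathcal{W}}Q_i$. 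Hence the operator is precisely $\oplus_{i \in I}\pi_{\mathcal{W}}Q_{i}$, giving the block-diagonal dual fusion frame asserted. The symmetric computation with $\pi_{\mathcal{V}}$ and the roles of $\mathcal{F},\mathcal{G}$ interchanged produces $\oplus_{i \in I}\pi_{\mathcal{V}}Q_{i}$ for the second system.

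\textbf{The principal obstacle} I anticipate is bookkeeping rather than conceptual: one must verify that the block-diagonal operator attached to the projected system really is the componentwise projection of the original $Q_i$, i.e.\ that passing from $C_{\mathcal{G}}C_{\mathcal{F}}^{*}$ to $C_{\pi_{\mathcal{W}}(\mathcal{G})}C_{\mathcal{F}}^{*}$ commutes correctly with $\pi_{\mathcal{W}}$ and the direct-sum decomposition. This is where the identity $T_{\pi_{\mathcal{W}}(\mathbf{V}),{\bf v}}C_{\pi_{\mathcal{W}}(\mathcal{G})}=\pi_{\mathcal{W}}T_{{\bf v}\mathcal{G}}$ from the proof of Proposition~\ref{P oblique dual system entonces dual system} does the real work; no genuinely new estimate is required. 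Thus the corollary follows as a clean combination of Theorem~\ref{T relacion entre dual y dual system}, Definition~\ref{D oblique dual fusion frame system}, and Proposition~\ref{P oblique dual system entonces dual system}, with the identification of the block-diagonal operators being the only point demanding care.
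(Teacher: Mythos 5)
Your proposal is correct and follows essentially the same route as the paper, which derives the corollary from Theorem~\ref{T relacion entre dual y dual system} together with Definition~\ref{D oblique dual fusion frame system} and the immediately preceding Proposition~\ref{P oblique dual system entonces dual system}. Your explicit identification of the $i$-th block of $C_{\pi_{\mathcal{W}}(\mathcal{G})}C_{\mathcal{F}}^{*}$ as $\pi_{\mathcal{W}}T_{\mathcal{G}_i}T_{\mathcal{F}_i}^{*}=\pi_{\mathcal{W}}Q_i$ is exactly the bookkeeping the paper leaves implicit.
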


The following two results can be proved in a similar way as
Proposition~\ref{P oblique dual system entonces dual system} and
Corollary~\ref{C oblique dual entonces dual}, respectively.

\begin{prop}
Let $\mathcal{W}$ and $\mathcal{V}$ be two closed subspaces of
$\mathcal{H}$ such that
$\mathcal{H}=\mathcal{V}\oplus\mathcal{W}^{\perp}$. Let
$(\mathbf{W},{\bf w}, \mathcal{F})$ be a fusion frame system for
$\mathcal{W}$ with local upper frame bound $\beta_{i}$ for each $i
\in I$ such that ${\rm sup}_{i \in I}\beta_{i} < \infty$,
$(\widetilde{{\bf W}},\widetilde{{\bf w}}, \widetilde{\mathcal{F}})$
be a fusion frame system for $\mathcal{W}$ with local upper frame
bound $\widetilde{\beta}_{i}$ for each $i \in I$ such that ${\rm
sup}_{i \in I}\widetilde{\beta}_{i} < \infty$ and
$|\mathcal{F}_{i}|=|\widetilde{\mathcal{F}}_{i}|$ for each $i \in
I$. If $(\widetilde{{\bf W}},\widetilde{{\bf w}},
\widetilde{\mathcal{F}})$ is a dual fusion frame system of
$(\mathbf{W},{\bf w}, \mathcal{F})$, then
$(\pi_{\mathcal{V},{\mathcal{W}}^\bot}\widetilde{{\bf
W}},\widetilde{{\bf w}},
\pi_{\mathcal{V},{\mathcal{W}}^\bot}\widetilde{\mathcal{F}})$ is an
oblique dual fusion frame system of $(\mathbf{W},{\bf w},
\mathcal{F})$ on $\mathcal{V}$.
\end{prop}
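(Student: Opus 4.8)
The plan is to mirror the proof of Proposition~\ref{P oblique dual system entonces dual system}, interchanging the roles of $\pi_{\mathcal{W}}$ and $\pi_{\mathcal{V},\mathcal{W}^{\perp}}$. First I would record that since $\mathcal{H}=\mathcal{V}\oplus\mathcal{W}^{\perp}$ we have ${\rm Ker}(\pi_{\mathcal{V},\mathcal{W}^{\perp}})=\mathcal{W}^{\perp}$, so $\pi_{\mathcal{V},\mathcal{W}^{\perp}}$ is injective on $\mathcal{W}$; as a bounded bijection from $\mathcal{W}$ onto $\mathcal{V}=\pi_{\mathcal{V},\mathcal{W}^{\perp}}(\mathcal{W})$ it is bounded below on $\mathcal{W}$, hence on each $\widetilde{W}_{i}\subseteq\mathcal{W}$. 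Consequently $\pi_{\mathcal{V},\mathcal{W}^{\perp}}(\widetilde{W}_{i})$ is closed and, by \cite[Proposition 5.3.1]{Christensen (2016)}, $\pi_{\mathcal{V},\mathcal{W}^{\perp}}(\widetilde{\mathcal{F}}_{i})$ is a frame for $\pi_{\mathcal{V},\mathcal{W}^{\perp}}(\widetilde{W}_{i})$ with upper bound bounded by $\widetilde{\beta}_{i}\|\pi_{\mathcal{V},\mathcal{W}^{\perp}}\|^{2}$, whose supremum over $i$ is finite because ${\rm sup}_{i\in I}\widetilde{\beta}_{i}<\infty$. Theorem~\ref{T wF marco sii WwF fusion frame system} then shows that $(\pi_{\mathcal{V},\mathcal{W}^{\perp}}(\widetilde{\mathbf{W}}),\widetilde{{\bf w}},\pi_{\mathcal{V},\mathcal{W}^{\perp}}(\widetilde{\mathcal{F}}))$ is a Bessel fusion system for $\mathcal{V}$, and the index condition is inherited since $|\pi_{\mathcal{V},\mathcal{W}^{\perp}}(\widetilde{\mathcal{F}}_{i})|=|\widetilde{\mathcal{F}}_{i}|=|\mathcal{F}_{i}|$.

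The key computational step is the operator identity
\[ T_{\pi_{\mathcal{V},\mathcal{W}^{\perp}}(\widetilde{\mathbf{W}}),\widetilde{{\bf w}}}\,C_{\pi_{\mathcal{V},\mathcal{W}^{\perp}}(\widetilde{\mathcal{F}})}=\pi_{\mathcal{V},\mathcal{W}^{\perp}}T_{\widetilde{{\bf w}}\widetilde{\mathcal{F}}}, \]
which I would verify by observing that $T_{\pi_{\mathcal{V},\mathcal{W}^{\perp}}(\widetilde{\mathcal{F}}_{i})}=\pi_{\mathcal{V},\mathcal{W}^{\perp}}T_{\widetilde{\mathcal{F}}_{i}}$ and pulling $\pi_{\mathcal{V},\mathcal{W}^{\perp}}$ out of the sum defining the synthesis operator. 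Combining this with $T_{\widetilde{{\bf w}}\widetilde{\mathcal{F}}}=T_{\widetilde{{\bf W}},\widetilde{{\bf w}}}C_{\widetilde{\mathcal{F}}}$ and the hypothesis $T_{\widetilde{{\bf W}},\widetilde{{\bf w}}}C_{\widetilde{\mathcal{F}}}C_{\mathcal{F}}^{*}T^{*}_{{\bf W},{\bf w}}=\pi_{\mathcal{W}}$ yields
\[ T_{\pi_{\mathcal{V},\mathcal{W}^{\perp}}(\widetilde{\mathbf{W}}),\widetilde{{\bf w}}}\,C_{\pi_{\mathcal{V},\mathcal{W}^{\perp}}(\widetilde{\mathcal{F}})}C_{\mathcal{F}}^{*}T^{*}_{{\bf W},{\bf w}}=\pi_{\mathcal{V},\mathcal{W}^{\perp}}\pi_{\mathcal{W}}=\pi_{\mathcal{V},\mathcal{W}^{\perp}}, \]
where the last equality is Lemma~\ref{L composicion proyeccion oblicua con ortogonal}(1) (this is the only place where part~(1) replaces part~(2) used in Proposition~\ref{P oblique dual system entonces dual system}).

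This last equation is precisely condition~(3) of Lemma~\ref{L equivalencias} for the bounded operator $Q=C_{\pi_{\mathcal{V},\mathcal{W}^{\perp}}(\widetilde{\mathcal{F}})}C_{\mathcal{F}}^{*}\in L(K_{\mathcal{W}},K_{\mathcal{V}})$. That lemma therefore promotes the two Bessel fusion systems to genuine fusion frames and certifies that $(\pi_{\mathcal{V},\mathcal{W}^{\perp}}(\widetilde{\mathbf{W}}),\widetilde{{\bf w}})$ is a $Q$-oblique dual fusion frame of $(\mathbf{W},{\bf w})$ on $\mathcal{V}$. Since $Q$ is exactly the operator $C_{\mathcal{G}}C_{\mathcal{F}}^{*}$ attached to the candidate system with $\mathcal{G}=\pi_{\mathcal{V},\mathcal{W}^{\perp}}(\widetilde{\mathcal{F}})$, Definition~\ref{D oblique dual fusion frame system} gives the desired conclusion.

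I expect the only genuinely delicate point to be the first paragraph: one must confirm that $\pi_{\mathcal{V},\mathcal{W}^{\perp}}$ is bounded below on $\mathcal{W}$ so that \cite[Proposition 5.3.1]{Christensen (2016)} applies and the images $\pi_{\mathcal{V},\mathcal{W}^{\perp}}(\widetilde{W}_{i})$ are closed subspaces. Everything after that is the same bookkeeping as in Proposition~\ref{P oblique dual system entonces dual system}, carried out in the oblique setting via Lemma~\ref{L equivalencias} rather than its orthogonal counterpart.
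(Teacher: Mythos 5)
Your proof is correct and is precisely the adaptation the paper intends: the authors state that this proposition ``can be proved in a similar way as Proposition~\ref{P oblique dual system entonces dual system}'', and your argument carries out exactly that adaptation, replacing $\pi_{\mathcal{W}}$ by $\pi_{\mathcal{V},\mathcal{W}^{\perp}}$, using the identity $T_{\pi_{\mathcal{V},\mathcal{W}^{\perp}}(\widetilde{\mathbf{W}}),\widetilde{{\bf w}}}C_{\pi_{\mathcal{V},\mathcal{W}^{\perp}}(\widetilde{\mathcal{F}})}=\pi_{\mathcal{V},\mathcal{W}^{\perp}}T_{\widetilde{{\bf w}}\widetilde{\mathcal{F}}}$, Lemma~\ref{L composicion proyeccion oblicua con ortogonal}(1), and Lemma~\ref{L equivalencias} in place of the orthogonal-case ingredients. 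Your attention to the fact that $\pi_{\mathcal{V},\mathcal{W}^{\perp}}|_{\mathcal{W}}$ is bounded below (so that images are closed and the upper frame bounds pick up only the factor $\|\pi_{\mathcal{V},\mathcal{W}^{\perp}}\|^{2}$) is a detail the paper leaves implicit, and it is handled correctly.
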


\begin{cor}\label{C dual entonces oblique dual}
Let $\mathcal{W}$ and $\mathcal{V}$ be two closed subspaces of
$\mathcal{H}$ such that
$\mathcal{H}=\mathcal{V}\oplus\mathcal{W}^{\perp}$. If $({\bf
\widetilde{W}},{\bf \widetilde{w}})$ is a block-diagonal $\oplus_{i
\in I}Q_{i}$-dual fusion frame of $(\mathbf{W},{\bf w})$ for
$\mathcal{W}$, then
$(\pi_{\mathcal{V},{\mathcal{W}}^\bot}(\mathbf{\widetilde{W}}),{\bf
\widetilde{w}})$
 is a block-diagonal $\oplus_{i \in
I}\pi_{\mathcal{V},{\mathcal{W}}^\bot}Q_{i}$-oblique dual fusion
frame of $(\mathbf{W},{\bf w})$ on $\mathcal{V}$.
\end{cor}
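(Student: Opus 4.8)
The plan is to imitate the derivation of Corollary~\ref{C oblique dual entonces dual}, routing the statement through the fusion frame system machinery instead of manipulating the operator $Q$ directly. First I would apply Theorem~\ref{T relacion entre dual y dual system}: since $({\bf \widetilde{W}},{\bf \widetilde{w}})$ is a block-diagonal $\oplus_{i \in I}Q_{i}$-dual fusion frame of $(\mathbf{W},{\bf w})$ for $\mathcal{W}$, there are local frames $\mathcal{F}_{i}$ for $W_{i}$ and $\widetilde{\mathcal{F}}_{i}$ for $\widetilde{W}_{i}$, with uniform upper and lower bounds, such that $(\widetilde{{\bf W}},\widetilde{{\bf w}},\widetilde{\mathcal{F}})$ is a dual fusion frame system of $(\mathbf{W},{\bf w},\mathcal{F})$ and $Q=C_{\widetilde{\mathcal{F}}}C_{\mathcal{F}}^{*}$, with $Q_{i}=T_{\widetilde{\mathcal{F}}_{i}}T_{\mathcal{F}_{i}}^{*}$ for each $i\in I$. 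This recasts the hypothesis as a statement about genuine dual fusion frame systems.

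Next I would feed this system into the preceding Proposition, the dual-to-oblique-dual transfer at the level of systems, obtaining that $(\pi_{\mathcal{V},\mathcal{W}^{\perp}}\widetilde{{\bf W}},\widetilde{{\bf w}},\pi_{\mathcal{V},\mathcal{W}^{\perp}}\widetilde{\mathcal{F}})$ is an oblique dual fusion frame system of $(\mathbf{W},{\bf w},\mathcal{F})$ on $\mathcal{V}$. By Definition~\ref{D oblique dual fusion frame system} this says exactly that $(\pi_{\mathcal{V},\mathcal{W}^{\perp}}\widetilde{{\bf W}},\widetilde{{\bf w}})$ is a $C_{\pi_{\mathcal{V},\mathcal{W}^{\perp}}\widetilde{\mathcal{F}}}C_{\mathcal{F}}^{*}$-oblique dual fusion frame of $(\mathbf{W},{\bf w})$ on $\mathcal{V}$, so it only remains to recognise this operator as $\oplus_{i \in I}\pi_{\mathcal{V},\mathcal{W}^{\perp}}Q_{i}$. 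By Remark~\ref{R Q Mi sum Wj subset Mi sum Vj} the operator $C_{\pi_{\mathcal{V},\mathcal{W}^{\perp}}\widetilde{\mathcal{F}}}C_{\mathcal{F}}^{*}$ is block-diagonal, and since the oblique projection pulls out of the synthesis sum, i.e. $T_{\pi_{\mathcal{V},\mathcal{W}^{\perp}}\widetilde{\mathcal{F}}_{i}}=\pi_{\mathcal{V},\mathcal{W}^{\perp}}T_{\widetilde{\mathcal{F}}_{i}}$, its $i$-th block equals $\pi_{\mathcal{V},\mathcal{W}^{\perp}}T_{\widetilde{\mathcal{F}}_{i}}T_{\mathcal{F}_{i}}^{*}=\pi_{\mathcal{V},\mathcal{W}^{\perp}}Q_{i}$, which is the claimed form.

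The step I expect to demand the most care is the one absorbed into the preceding Proposition: checking that $\pi_{\mathcal{V},\mathcal{W}^{\perp}}\widetilde{\mathcal{F}}_{i}$ is again a frame for the closed subspace $\pi_{\mathcal{V},\mathcal{W}^{\perp}}\widetilde{W}_{i}$, with bounds uniform in $i$. Here the hypothesis $\mathcal{H}=\mathcal{V}\oplus\mathcal{W}^{\perp}$ is essential: it makes $\pi_{\mathcal{V},\mathcal{W}^{\perp}}|_{\mathcal{W}}$ a bounded bijection of $\mathcal{W}$ onto $\mathcal{V}$ (injectivity is immediate from $\mathcal{W}\cap\mathcal{W}^{\perp}=\{0\}$), hence a topological isomorphism by the open mapping theorem, which as in \cite[Proposition 5.3.1]{Christensen (2016)} carries each frame $\widetilde{\mathcal{F}}_{i}\subseteq\widetilde{W}_{i}\subseteq\mathcal{W}$ to a frame for its image and keeps the bounds controlled by $\|\pi_{\mathcal{V},\mathcal{W}^{\perp}}\|$ and $\|(\pi_{\mathcal{V},\mathcal{W}^{\perp}}|_{\mathcal{W}})^{-1}\|$. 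The reconstruction identity then closes just as in that Proposition, the new ingredient being $\pi_{\mathcal{V},\mathcal{W}^{\perp}}\pi_{\mathcal{W}}=\pi_{\mathcal{V},\mathcal{W}^{\perp}}$ from Lemma~\ref{L composicion proyeccion oblicua con ortogonal}(1), which here plays the role that the orthogonal-projection identity plays in the proof of Corollary~\ref{C oblique dual entonces dual}.
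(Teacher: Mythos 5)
Your proposal is correct and follows essentially the same route the paper intends: the paper derives this corollary "in a similar way as Corollary~\ref{C oblique dual entonces dual}", i.e.\ by combining Theorem~\ref{T relacion entre dual y dual system} (to pass to a dual fusion frame system), the immediately preceding proposition (to project it to an oblique dual fusion frame system via $\pi_{\mathcal{V},\mathcal{W}^{\perp}}$), and Definition~\ref{D oblique dual fusion frame system}. Your extra care in identifying the $i$-th block of $C_{\pi_{\mathcal{V},\mathcal{W}^{\perp}}\widetilde{\mathcal{F}}}C_{\mathcal{F}}^{*}$ as $\pi_{\mathcal{V},\mathcal{W}^{\perp}}Q_{i}$ and in checking that $\pi_{\mathcal{V},\mathcal{W}^{\perp}}|_{\mathcal{W}}$ is an isomorphism onto $\mathcal{V}$ only makes explicit what the paper leaves implicit.
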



\section{Oblique dual families}

In \cite{Heineken-Morillas-Benavente-Zakowicz (2014),
Heineken-Morillas (2014)} it is shown that component preserving dual
fusion frames are related to the left inverses of the analysis
operator. In this section we will show that analogous results are
valid for component preserving oblique dual fusion frames.

The following Lemma can be deduced from Corollary~\ref{C oblique
dual entonces dual}, \cite[Lemma
3.4]{Heineken-Morillas-Benavente-Zakowicz (2014)}, Lemma~\ref{L
composicion proyeccion oblicua con ortogonal} and Proposition~\ref{P
relacion inversa inversa oblicua}. Nevertheless, we include a short
direct proof of it.

\begin{lem}\label{L V,v dual fusion frame entonces Vi=ApiWj}
Let $\mathcal{W}$ and $\mathcal{V}$ be two closed subspaces of
$\mathcal{H}$ such that
$\mathcal{H}=\mathcal{V}\oplus\mathcal{W}^{\perp}$. Let
$(\mathbf{W},{\bf w})$ be a fusion frame for $\mathcal{W}$. If
$({\bf V},{\bf v})$ is a $Q$-component preserving oblique dual
fusion frame of $(\mathbf{W},{\bf w})$ on $\mathcal{V}$ then
$V_{i}=AM_{i}K_{\mathcal{W}}$ for each $i\in I$ for
$A=T_{\mathbf{V},{\bf v}}Q \in \mathfrak{L}_{T^{*}_{\mathbf{W},{\bf
w}}}^{\mathcal{V},\mathcal{W}^{\perp}}$.
\end{lem}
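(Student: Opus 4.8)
The plan is to prove two things about $A = T_{\mathbf{V},\mathbf{v}}Q$: first that $A \in \mathfrak{L}_{T^{*}_{\mathbf{W},\mathbf{w}}}^{\mathcal{V},\mathcal{W}^{\perp}}$, and second that $V_{i} = AM_{i}K_{\mathcal{W}}$ for each $i \in I$. For the membership claim, by the Definition of $\mathfrak{L}_{T}^{\mathcal{V},\mathcal{W}^{\perp}}$ I must verify that $AT^{*}_{\mathbf{W},\mathbf{w}} = \pi_{\mathcal{V},\mathcal{W}^{\perp}}$ and that $\mathrm{Im}(A) = \mathcal{V}$. Observe that Lemma~\ref{L T inyectiva en V} applies since $\mathrm{Ker}(T^{*}_{\mathbf{W},\mathbf{w}}) = \mathrm{Im}(T_{\mathbf{W},\mathbf{w}})^{\perp} = \mathcal{W}^{\perp}$, which guarantees the oblique left inverse setting is well posed.

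First I would check the intertwining identity. Since $(\mathbf{V},\mathbf{v})$ is a $Q$-oblique dual fusion frame of $(\mathbf{W},\mathbf{w})$, equation~(\ref{E TvQTw*=I}) gives directly
\begin{equation*}
AT^{*}_{\mathbf{W},\mathbf{w}} = T_{\mathbf{V},\mathbf{v}}QT^{*}_{\mathbf{W},\mathbf{w}} = \pi_{\mathcal{V},\mathcal{W}^{\perp}}.
\end{equation*}
For the image condition, $\mathrm{Im}(A) \subseteq \mathrm{Im}(T_{\mathbf{V},\mathbf{v}}) = \mathcal{V}$ because $(\mathbf{V},\mathbf{v})$ is a fusion frame for $\mathcal{V}$. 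Conversely, from $AT^{*}_{\mathbf{W},\mathbf{w}} = \pi_{\mathcal{V},\mathcal{W}^{\perp}}$ and $\mathrm{Im}(\pi_{\mathcal{V},\mathcal{W}^{\perp}}) = \mathcal{V}$ we get $\mathcal{V} \subseteq \mathrm{Im}(A)$, so $\mathrm{Im}(A) = \mathcal{V}$. This establishes $A \in \mathfrak{L}_{T^{*}_{\mathbf{W},\mathbf{w}}}^{\mathcal{V},\mathcal{W}^{\perp}}$.

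The heart of the argument, and the step I expect to be the main obstacle, is the component equality $V_{i} = AM_{i}K_{\mathcal{W}}$. Here I would exploit that $Q$ is \emph{component preserving}, meaning $QM_{i,\mathbf{W}}K_{\mathcal{W}} = M_{i,\mathbf{V}}K_{\mathcal{V}}$ for each $i$. The plan is to compute
\begin{equation*}
AM_{i}K_{\mathcal{W}} = T_{\mathbf{V},\mathbf{v}}QM_{i,\mathbf{W}}K_{\mathcal{W}} = T_{\mathbf{V},\mathbf{v}}M_{i,\mathbf{V}}K_{\mathcal{V}}.
\end{equation*}
It then remains to identify $T_{\mathbf{V},\mathbf{v}}M_{i,\mathbf{V}}K_{\mathcal{V}}$ with $V_{i}$. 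Since $M_{i,\mathbf{V}}$ projects onto the $i$-th coordinate of $K_{\mathcal{V}} = \oplus_{j}V_{j}$, the set $M_{i,\mathbf{V}}K_{\mathcal{V}}$ consists of all sequences supported on the single index $i$ with entry in $V_{i}$; applying the synthesis operator $T_{\mathbf{V},\mathbf{v}}(f_{j})_{j} = \sum_{j}v_{j}f_{j}$ to such a sequence yields exactly $v_{i}f_{i}$ with $f_{i} \in V_{i}$, and since $v_{i} > 0$ this sweeps out all of $V_{i}$. Hence $AM_{i}K_{\mathcal{W}} = V_{i}$, completing the proof. The subtlety to handle carefully is keeping the two index-restriction operators $M_{i,\mathbf{W}}$ and $M_{i,\mathbf{V}}$ distinct and confirming that the component preserving hypothesis is what converts the former into the latter.
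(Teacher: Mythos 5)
Your proposal is correct and follows essentially the same route as the paper's (much terser) proof: verify $A=T_{\mathbf{V},\mathbf{v}}Q\in\mathfrak{L}_{T^{*}_{\mathbf{W},\mathbf{w}}}^{\mathcal{V},\mathcal{W}^{\perp}}$ directly from the duality identity, then use the component preserving property to get $AM_{i}K_{\mathcal{W}}=T_{\mathbf{V},\mathbf{v}}M_{i,\mathbf{V}}K_{\mathcal{V}}=V_{i}$. You have simply filled in the details the paper labels ``clearly,'' including the correct observation that $v_{i}>0$ is what makes $T_{\mathbf{V},\mathbf{v}}M_{i,\mathbf{V}}K_{\mathcal{V}}$ sweep out all of $V_{i}$.
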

\begin{proof}
Let $Q\in L(\mathcal{K}_{\mathcal{W}},\mathcal{K}_{\mathcal{V}})$ be
component preserving such that $T_{\mathbf{V},{\bf
v}}QT^{*}_{\mathbf{W},{\bf
w}}=\pi_{\mathcal{V},\mathcal{W}^{\perp}}$ and let
$A=T_{\mathbf{V},{\bf v}}Q$. Clearly,
$A\in\mathfrak{L}_{T^{*}_{\mathbf{W},{\bf
w}}}^{\mathcal{V},\mathcal{W}^{\perp}}$. Using that $Q$ is component
preserving, $AM_{i}K_{\mathcal{W}}=T_{\mathbf{V},{\bf
v}}QM_{i}K_{\mathcal{W}}=V_{i}$ for each $i \in I$.\end{proof}

A reciprocal of the previous result is:

\begin{lem}\label{L Vi=ApiWj entonces V,v dual fusion frame}
Let $\mathcal{W}$ and $\mathcal{V}$ be two closed subspaces of
$\mathcal{H}$ such that
$\mathcal{H}=\mathcal{V}\oplus\mathcal{W}^{\perp}$. Let
$(\mathbf{W},\mathbf{w})$ be a fusion frame for $\mathcal{W}$, $A
\in \mathfrak{L}_{T^{*}_{\mathbf{W},{\bf
w}}}^{\mathcal{V},\mathcal{W}^{\perp}}$ and
$V_{i}=AM_{i}\mathcal{K}_{\mathcal{W}}$ for each $i \in I$. If
$(\mathbf{V},\mathbf{v})$ is a Bessel fusion sequence for
$\mathcal{V}$ and

\centerline{$Q_{A,{\bf v}}: K_{\mathcal{W}} \rightarrow
K_{\mathcal{V}} \text{ , }Q_{A,{\bf v}}(f_j)_{j \in
I}=\paren{\frac{1}{v_{i}}AM_{i}(f_j)_{j \in I}}_{i \in I},$}

\noindent is a well defined bounded operator, then
$(\mathbf{V},\mathbf{v})$ is a $Q_{A,\mathbf{v}}$-component
preserving oblique dual fusion frame of $(\mathbf{W},\mathbf{w})$ on
$\mathcal{V}$.
\end{lem}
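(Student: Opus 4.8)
The plan is to reduce the whole statement to Lemma~\ref{L equivalencias} by first establishing the single operator identity $T_{\mathbf{V},{\bf v}}Q_{A,{\bf v}}=A$, and then to check separately that $Q_{A,{\bf v}}$ is component preserving. First I would compute the synthesis side. For $(f_j)_{j\in I}\in K_{\mathcal{W}}$ we have, by definition, $Q_{A,{\bf v}}(f_j)_{j\in I}=\paren{\frac{1}{v_i}AM_i(f_j)_{j\in I}}_{i\in I}$, an element of $K_{\mathcal{V}}$ by the standing hypothesis that $Q_{A,{\bf v}}$ is well defined and bounded. Applying $T_{\mathbf{V},{\bf v}}$ cancels the weights, giving $T_{\mathbf{V},{\bf v}}Q_{A,{\bf v}}(f_j)_{j\in I}=\sum_{i\in I}v_i\frac{1}{v_i}AM_i(f_j)_{j\in I}=\sum_{i\in I}AM_i(f_j)_{j\in I}$. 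Since the finite partial sums of $\sum_{i\in I}M_i(f_j)_{j\in I}$ converge to $(f_j)_{j\in I}$ in $K_{\mathcal{W}}$ and $A$ is bounded, I can pull $A$ through the sum to obtain $T_{\mathbf{V},{\bf v}}Q_{A,{\bf v}}=A$.

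Composing on the right with $T^{*}_{\mathbf{W},{\bf w}}$ and using $A\in\mathfrak{L}_{T^{*}_{\mathbf{W},{\bf w}}}^{\mathcal{V},\mathcal{W}^{\perp}}$ yields $T_{\mathbf{V},{\bf v}}Q_{A,{\bf v}}T^{*}_{\mathbf{W},{\bf w}}=AT^{*}_{\mathbf{W},{\bf w}}=\pi_{\mathcal{V},\mathcal{W}^{\perp}}$, which is exactly condition (3) of Lemma~\ref{L equivalencias}. Since $(\mathbf{W},{\bf w})$ and $(\mathbf{V},{\bf v})$ are Bessel fusion sequences by hypothesis, that lemma promotes them to fusion frames and certifies that $(\mathbf{V},{\bf v})$ is a $Q_{A,{\bf v}}$-oblique dual fusion frame of $(\mathbf{W},{\bf w})$ on $\mathcal{V}$. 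It then remains only to upgrade ``oblique dual'' to ``component preserving oblique dual''. For this I would verify $Q_{A,{\bf v}}M_{j,\mathbf{W}}K_{\mathcal{W}}=M_{j,{\bf V}}K_{\mathcal{V}}$ for each $j\in I$. The key algebraic fact is $M_iM_{j,\mathbf{W}}=M_{j,\mathbf{W}}$ when $i=j$ and $0$ otherwise, so $Q_{A,{\bf v}}M_{j,\mathbf{W}}(f_k)_{k\in I}$ is supported in the $j$-th slot with $j$-th entry $\frac{1}{v_j}AM_{j,\mathbf{W}}(f_k)_{k\in I}$. As $(f_k)_{k\in I}$ ranges over $K_{\mathcal{W}}$ this entry ranges over $\frac{1}{v_j}AM_{j}K_{\mathcal{W}}=V_j$ (using $V_j=AM_j\mathcal{K}_{\mathcal{W}}$ and $v_j>0$), which is precisely the $j$-th slot of $M_{j,{\bf V}}K_{\mathcal{V}}$; this gives the desired equality.

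The computations are essentially bookkeeping, and I expect the only delicate point to be the interchange $\sum_{i}AM_i=A\sum_i M_i$ used to derive $T_{\mathbf{V},{\bf v}}Q_{A,{\bf v}}=A$. This rearrangement is legitimate precisely because $A$ is bounded and, by hypothesis, $Q_{A,{\bf v}}(f_j)_{j\in I}$ genuinely lies in $K_{\mathcal{V}}$, so that the synthesis sum converges (unconditionally, by the Bessel property of $(\mathbf{V},{\bf v})$) while the net of finite partial sums converges by continuity to $A(f_j)_{j\in I}$; the two limits must then agree. I would make a point of invoking the well-definedness assumption on $Q_{A,{\bf v}}$ at exactly this juncture rather than treating the rearrangement as automatic.
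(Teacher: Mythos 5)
Your proof is correct and takes essentially the same route as the paper's: establish $T_{\mathbf{V},\mathbf{v}}Q_{A,\mathbf{v}}=A$, compose on the right with $T^{*}_{\mathbf{W},\mathbf{w}}$ to get $\pi_{\mathcal{V},\mathcal{W}^{\perp}}$, and verify that $Q_{A,\mathbf{v}}$ is component preserving; the paper merely asserts these three facts without the computations you spell out. Your explicit appeal to Lemma~\ref{L equivalencias} to promote the Bessel fusion sequences to fusion frames is likewise implicit in the paper (via the remark following Definition~\ref{D oblique fusion frame dual}), so there is no substantive difference in approach.
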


\begin{proof}
From the hypotheses, $\overline{\text{span}}\bigcup_{i \in
I}V_{i}={\rm Im}(A)=\mathcal{V}$, $Q_{A,{\bf v}}$ is component
preserving and $A=T_{\mathbf{V},{\bf v}}Q_{A,{\bf v}}$. Since
$A\in\mathfrak{L}_{T^{*}_{\mathbf{W},{\bf
w}}}^{\mathcal{V},\mathcal{W}^{\perp}}$, $T_{\mathbf{V},{\bf
v}}Q_{A,{\bf v}}T^{*}_{\mathbf{W},{\bf
w}}=\pi_{\mathcal{V},\mathcal{W}^{\perp}}$. So $(\mathbf{V},{\bf
v})$ is a $Q_{A,{\bf v}}$-component preserving oblique dual fusion
frame of $(\mathbf{W},{\bf w})$ on $\mathcal{V}$.
\end{proof}

See \cite[Remark 3.6]{Heineken-Morillas-Benavente-Zakowicz (2014)}
for sufficient conditions for $(\mathcal{V},v)$ being a Bessel
fusion sequence for $\mathcal{V}$ and for $Q_{A,v}$ being a well
defined bounded operator in Lemma~\ref{L Vi=ApiWj entonces V,v dual
fusion frame}. For the case in which $\mathcal{W}$ and $\mathcal{V}$
are finite-dimensional, Lemma~\ref{L V,v dual fusion frame entonces
Vi=ApiWj} and Lemma~\ref{L Vi=ApiWj entonces V,v dual fusion frame}
lead to the following characterization:

\begin{thm}\label{T V,v dual fusion
frame sii Vi=ApiWj} Let $\mathcal{W}$ and $\mathcal{V}$ be
finite-dimensional subspaces of the Hilbert space $\mathcal{H}$ such
that $\mathcal{H}=\mathcal{V}\oplus \mathcal{W}^{\perp}$. Let
$\mathcal{W}$ and $\mathcal{V}$ be two subspaces of $\mathcal{H}$
such that $\mathcal{H}=\mathcal{V}\oplus\mathcal{W}^{\perp}$. Let
$(\mathbf{W},{\bf w})$ be a fusion frame for $\mathcal{W}$. Then
$({\bf V},{\bf v})$ is a $Q$-component preserving oblique dual
fusion frame of $(\mathbf{W},{\bf w})$ on $\mathcal{V}$ if and only
if $V_{i}=AM_{i}K_{\mathcal{W}}$ for each $i\in I$ and $Q=Q_{A,{\bf
v}},$ for some $A \in \mathfrak{L}_{T^{*}_{\mathbf{W},{\bf
w}}}^{\mathcal{V},\mathcal{W}^{\perp}}$ with ${\rm
Im}(A)=\mathcal{V}$. Moreover, any element of
$\mathfrak{L}_{T^{*}_{\mathbf{W},{\bf
w}}}^{\mathcal{V},\mathcal{W}^{\perp}}$ with ${\rm
Im}(A)=\mathcal{V}$ is of the form $T_{{\bf V},{\bf v}}Q$ where
$({\bf V},{\bf v})$ is some $Q$-component preserving oblique dual
fusion frame of $(\mathbf{W},{\bf w})$ on $\mathcal{V}$.
\end{thm}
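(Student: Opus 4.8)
The plan is to prove Theorem~\ref{T V,v dual fusion frame sii Vi=ApiWj} by leveraging the two lemmas that precede it, namely Lemma~\ref{L V,v dual fusion frame entonces Vi=ApiWj} and Lemma~\ref{L Vi=ApiWj entonces V,v dual fusion frame}, with the finite-dimensionality hypothesis serving to discharge the technical Bessel/boundedness side conditions that appear in the latter. I would split the argument into the two directions of the biconditional and then establish the final ``moreover'' clause separately.

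For the forward direction, suppose $({\bf V},{\bf v})$ is a $Q$-component preserving oblique dual fusion frame of $(\mathbf{W},{\bf w})$ on $\mathcal{V}$. I would apply Lemma~\ref{L V,v dual fusion frame entonces Vi=ApiWj} directly: it yields $A=T_{\mathbf{V},{\bf v}}Q \in \mathfrak{L}_{T^{*}_{\mathbf{W},{\bf w}}}^{\mathcal{V},\mathcal{W}^{\perp}}$ with $V_{i}=AM_{i}K_{\mathcal{W}}$ for each $i\in I$. The only thing to check beyond the lemma's conclusion is that $Q=Q_{A,{\bf v}}$. For this I would verify that $Q$ and $Q_{A,{\bf v}}$ agree componentwise: since $Q$ is component preserving, $QM_{i}K_{\mathcal{W}}=M_{i,{\bf V}}K_{\mathcal{V}}$, and by construction $AM_{i}=T_{\mathbf{V},{\bf v}}QM_{i}=v_{i}(\text{the }i\text{-th component of }QM_{i})$ on each $V_i$; unwinding the definition $Q_{A,{\bf v}}(f_j)_j=(\frac{1}{v_i}AM_i(f_j)_j)_i$ then forces $Q_{A,{\bf v}}=Q$. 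Note that $\mathrm{Im}(A)=\mathcal{V}$ is automatic from $A\in\mathfrak{L}_{T^{*}_{\mathbf{W},{\bf w}}}^{\mathcal{V},\mathcal{W}^{\perp}}$ by definition of that set.

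For the reverse direction, suppose $V_{i}=AM_{i}K_{\mathcal{W}}$ and $Q=Q_{A,{\bf v}}$ for some $A \in \mathfrak{L}_{T^{*}_{\mathbf{W},{\bf w}}}^{\mathcal{V},\mathcal{W}^{\perp}}$. I would appeal to Lemma~\ref{L Vi=ApiWj entonces V,v dual fusion frame}, whose hypotheses require that $(\mathbf{V},\mathbf{v})$ be a Bessel fusion sequence for $\mathcal{V}$ and that $Q_{A,{\bf v}}$ be well defined and bounded. This is precisely where finite-dimensionality does the work: when $\mathcal{W}$ and $\mathcal{V}$ are finite-dimensional the index set is effectively finite (or the subspaces are finite-dimensional so each $V_i$ is finite-dimensional and $\overline{\mathrm{span}}\bigcup_i V_i=\mathcal{V}=\mathrm{Im}(A)$ is finite-dimensional), so every such sequence is automatically Bessel and $Q_{A,{\bf v}}$ is automatically a bounded operator, with no extra conditions needed. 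Invoking the lemma then gives that $(\mathbf{V},\mathbf{v})$ is a $Q_{A,{\bf v}}$-component preserving oblique dual fusion frame, as required.

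Finally, for the ``moreover'' statement I would take an arbitrary $A \in \mathfrak{L}_{T^{*}_{\mathbf{W},{\bf w}}}^{\mathcal{V},\mathcal{W}^{\perp}}$ with $\mathrm{Im}(A)=\mathcal{V}$, define $V_i=AM_iK_{\mathcal{W}}$ and choose any convenient weights ${\bf v}$ (for instance ${\bf v}=1$), then apply the reverse direction just proved to produce a $Q_{A,{\bf v}}$-component preserving oblique dual $({\bf V},{\bf v})$ with $A=T_{{\bf V},{\bf v}}Q_{A,{\bf v}}$, exhibiting $A$ in the claimed form. The step I expect to be the main obstacle, or at least the one demanding the most care, is the componentwise identification $Q=Q_{A,{\bf v}}$ in the forward direction: one must argue that the component preserving property pins down $Q$ entirely from $A=T_{\mathbf{V},{\bf v}}Q$ and not merely up to something acting trivially, using that $T_{\mathbf{V},{\bf v}}$ restricted to each component $M_{i,{\bf V}}K_{\mathcal{V}}$ is injective enough to recover the weighted component maps. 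The Bessel and boundedness verifications, by contrast, are routine once finite-dimensionality is invoked.
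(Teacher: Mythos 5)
Your proposal is correct and follows essentially the same route as the paper, which presents this theorem precisely as the finite-dimensional consequence of Lemma~\ref{L V,v dual fusion frame entonces Vi=ApiWj} and Lemma~\ref{L Vi=ApiWj entonces V,v dual fusion frame}, with finite-dimensionality (and hence a finite index set) discharging the Bessel and boundedness hypotheses. Your identification $Q=Q_{A,\mathbf{v}}$ in the forward direction is the right extra step and is confirmed by the computation $Q_{A,\mathbf{v}}(f_j)_{j\in I}=\paren{\tfrac{1}{v_i}T_{\mathbf{V},\mathbf{v}}M_{i,\mathbf{V}}Q(f_j)_{j\in I}}_{i\in I}=Q(f_j)_{j\in I}$, which the paper itself records in the remark following the theorem.
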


The above results show that component preserving oblique dual fusion
frames can be obtained in a similar manner as in the vectorial case
(see \cite[Lemma 6.3.5]{Christensen (2016)}).

\begin{rem}
By Proposition~\ref{P relacion inversa inversa oblicua},
Theorem~\ref{T V,v dual fusion frame sii Vi=ApiWj} and \cite[Theorem
3.5]{Heineken-Morillas (2014)}, if $\mathcal{W}$ and $\mathcal{V}$
are finite-dimensional, there is a bijection between component
preserving dual and component preserving oblique dual fusion frames.
\end{rem}

\begin{rem}
As a consequence of Theorem~\ref{T V,v dual fusion frame sii
Vi=ApiWj}, if $\mathcal{W}$ and $\mathcal{V}$ are finite-dimensional
we can always associate to any $Q$-oblique dual fusion frame $({\bf
V},{\bf v})$ of $(\mathbf{W},{\bf w})$ the $Q_{A,{\bf
\tilde{v}}}$-component preserving oblique dual fusion frame
$\{(AM_{i}K_{\mathcal{W}},\tilde{v}_{i})\}_{i \in I}$ with
$A=T_{{\bf V},{\bf v}}Q$ and $\{\widetilde{v}_{i}\}_{i \in I}$
arbitrary weights. Furthermore, if $Q$ is block-diagonal, then
$Q_{T_{{\bf V},{\bf v}}Q,{\bf v}}(f_i)_{i \in I}=Q(f_i)_{i \in I}$
for each $(f_i)_{i \in I} \in K_{\mathcal{W}}$.
\end{rem}

\begin{rem}\label{R conjunto inversas oblicuas de TWw*}
It is easy to see that
\begin{align*}
\mathfrak{L}_{T^{*}_{\mathbf{W},{\bf
w}}}^{\mathcal{V},\mathcal{W}^{\perp}}&=\{\pi_{\mathcal{V},\mathcal{W}^{\perp}}S_{\mathbf{W},{\bf
w}}^{\dagger}T_{\mathbf{W},{\bf
w}}+B(I_{K_{\mathcal{W}}}-T^{*}_{\mathbf{W},{\bf
w}}S_{\mathbf{W},{\bf w}}^{\dagger}T_{\mathbf{W},{\bf w}}):B\in
L(K_{\mathcal{W}},\mathcal{V})\}\\&=\{\pi_{\mathcal{V},\mathcal{W}^{\perp}}S_{\mathbf{W},{\bf
w}}^{\dagger}T_{\mathbf{W},{\bf w}}+B:B\in
L(K_{\mathcal{W}},\mathcal{V}) \text{ and } BT^{*}_{\mathbf{W},{\bf
w}}=0\}.
\end{align*}
In addition, by \cite[Lemma 4]{Christensen-Eldar (2006)},
\begin{align*}
\mathfrak{L}_{T^{*}_{\mathbf{W},{\bf
w}}}^{\mathcal{V},\mathcal{W}^{\perp}}&=\{B(T^{*}_{\mathbf{W},{\bf
w}}B)^{\dagger}:B\in L(\mathcal{H},K_{\mathcal{W}}) \text{ and }
{\rm
Im}(B)=\mathcal{V}\}\\&=\{\pi_{\mathcal{V},\mathcal{W}^{\perp}}(BT^{*}_{\mathbf{W},{\bf
w}})^{\dagger}B:B\in L(K_{\mathcal{W}},\mathcal{H}) \text{, }
\overline{{\rm Im}(B)}={\rm Im}(B) \text{ and } K_{\mathcal{W}}={\rm
Ker}(H) \oplus {\rm Im}(T^{*}_{\mathbf{W},{\bf w}})\}.
\end{align*}
\end{rem}

As a consequence of Theorem~\ref{T V,v dual fusion frame sii
Vi=ApiWj} we obtain the following alternative characterization of
component preserving oblique dual fusion frames.

\begin{thm}
Let $\mathcal{W}$ and $\mathcal{V}$ be finite-dimensional subspaces
of the Hilbert space $\mathcal{H}$ such that
$\mathcal{H}=\mathcal{V}\oplus \mathcal{W}^{\perp}$. Assume
$(\mathbf{W},{\bf w})$ is a fusion frame for $\mathcal{W}$. Then the
$Q$-component preserving oblique dual fusion frames of
$(\mathbf{W},{\bf w})$ on $\mathcal{V}$ are the families
\begin{equation}\label{expr1odff}
\{(V_i,v_{i})\}_{i\in
I}=\{(\pi_{\mathcal{V},{\mathcal{W}}^\bot}(HT^{*}_{{\bf W},{\bf
w}})^{\dagger}Z_i,v_i)\}_{i\in I},
\end{equation}
where $(\mathbf{Z},{\bf w})$ is a fusion frame sequence that
satisfies $HT^{*}_{{\bf W},{\bf w}}=T_{{\bf Z},{\bf
w}}(HM_iT^{*}_{{\bf W},{\bf w}})_{i \in I},$ for some $H \in
L(K_{\mathcal{W}}, \mathcal{H})$ with $\overline{{\rm Im}(H)}={\rm
Im}(H)$ and ${\rm Ker}(H)\oplus {\rm Im}(T^{*}_{{\bf W},{\bf
w}})=K_{\mathcal{W}}$ and
$Q=Q_{\pi_{\mathcal{V},{\mathcal{W}}^\bot}(HT^{*}_{{\bf W},{\bf
w}})^{\dagger}H,{\bf v}}$. Also,

\centerline{$\{(V_i,v_{i})\}_{i\in I}=\{(B(T^*_{{\bf W},{\bf
w}}B)^{\dagger}\pi_{ {\rm Im}(T^{*}_{{\bf W},{\bf w}}),\mathcal{S}}
 M_{i}K_{\mathcal{W}}, v_i)\}_{i\in I},$}

\noindent where $B\in L(K_{\mathcal{W}},\mathcal{H})$ is such that
${\rm Im}(B)=\mathcal{V}$, and $\mathcal{S}$ is a subspace of
$K_{\mathcal{W}}$ such that $K_{\mathcal{W}}={\rm Im}(T^{*}_{{\bf
W},{\bf w}})\oplus\mathcal{S}.$
\end{thm}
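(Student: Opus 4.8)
The plan is to reduce the whole statement to Theorem~\ref{T V,v dual fusion frame sii Vi=ApiWj}, which already identifies the $Q$-component preserving oblique dual fusion frames of $(\mathbf{W},{\bf w})$ on $\mathcal{V}$ as exactly the families $\{(AM_iK_{\mathcal{W}},v_i)\}_{i\in I}$ with $Q=Q_{A,{\bf v}}$, where $A$ runs over the oblique left inverses in $\mathfrak{L}_{T^{*}_{\mathbf{W},{\bf w}}}^{\mathcal{V},\mathcal{W}^{\perp}}$ with ${\rm Im}(A)=\mathcal{V}$. Consequently, each of the two displayed descriptions is obtained by substituting, into $V_i=AM_iK_{\mathcal{W}}$ and $Q=Q_{A,{\bf v}}$, one of the parametrizations of $\mathfrak{L}_{T^{*}_{\mathbf{W},{\bf w}}}^{\mathcal{V},\mathcal{W}^{\perp}}$ collected in Remark~\ref{R conjunto inversas oblicuas de TWw*}. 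Since $\mathcal{W}$ and $\mathcal{V}$ are finite-dimensional, $T^{*}_{\mathbf{W},{\bf w}}$ has closed range and all pseudo-inverses occurring below are well defined.

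For the first family~(\ref{expr1odff}) I would take $A=\pi_{\mathcal{V},\mathcal{W}^{\perp}}(HT^{*}_{\mathbf{W},{\bf w}})^{\dagger}H$, which is the second description of $\mathfrak{L}_{T^{*}_{\mathbf{W},{\bf w}}}^{\mathcal{V},\mathcal{W}^{\perp}}$ in Remark~\ref{R conjunto inversas oblicuas de TWw*} (with the operator there renamed $H$, subject to $\overline{{\rm Im}(H)}={\rm Im}(H)$ and $K_{\mathcal{W}}={\rm Ker}(H)\oplus{\rm Im}(T^{*}_{\mathbf{W},{\bf w}})$). Substituting into $V_i=AM_iK_{\mathcal{W}}$ gives $V_i=\pi_{\mathcal{V},\mathcal{W}^{\perp}}(HT^{*}_{\mathbf{W},{\bf w}})^{\dagger}HM_iK_{\mathcal{W}}$, so that setting $Z_i:=HM_iK_{\mathcal{W}}$ produces exactly~(\ref{expr1odff}), with $Q=Q_{A,{\bf v}}$ as stated. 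What then has to be verified is that $(\mathbf{Z},{\bf w})$ is a fusion frame sequence and that the factorization $HT^{*}_{\mathbf{W},{\bf w}}=T_{\mathbf{Z},{\bf w}}(HM_iT^{*}_{\mathbf{W},{\bf w}})_{i\in I}$ holds; this is just the componentwise decomposition of $HT^{*}_{\mathbf{W},{\bf w}}$ through the subspaces $Z_i$, and it relies on the closed-range and kernel hypotheses on $H$. Conversely, for any admissible pair $(H,\mathbf{Z})$ the operator $A$ lies in $\mathfrak{L}_{T^{*}_{\mathbf{W},{\bf w}}}^{\mathcal{V},\mathcal{W}^{\perp}}$ with ${\rm Im}(A)=\mathcal{V}$, and the factorization guarantees $\pi_{\mathcal{V},\mathcal{W}^{\perp}}(HT^{*}_{\mathbf{W},{\bf w}})^{\dagger}Z_i=AM_iK_{\mathcal{W}}$, so the family is precisely the component preserving oblique dual attached to $A$ by Theorem~\ref{T V,v dual fusion frame sii Vi=ApiWj}.

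For the second family I would use the first description of $\mathfrak{L}_{T^{*}_{\mathbf{W},{\bf w}}}^{\mathcal{V},\mathcal{W}^{\perp}}$, namely $A=B(T^{*}_{\mathbf{W},{\bf w}}B)^{\dagger}$ with $B\in L(K_{\mathcal{W}},\mathcal{H})$ and ${\rm Im}(B)=\mathcal{V}$. The only genuine point is the insertion of the oblique projection $\pi_{{\rm Im}(T^{*}_{\mathbf{W},{\bf w}}),\mathcal{S}}$. I would set $A'=B(T^{*}_{\mathbf{W},{\bf w}}B)^{\dagger}\pi_{{\rm Im}(T^{*}_{\mathbf{W},{\bf w}}),\mathcal{S}}$ and check $A'\in\mathfrak{L}_{T^{*}_{\mathbf{W},{\bf w}}}^{\mathcal{V},\mathcal{W}^{\perp}}$ with ${\rm Im}(A')=\mathcal{V}$: because ${\rm Im}(T^{*}_{\mathbf{W},{\bf w}})$ is fixed by the projection one has $\pi_{{\rm Im}(T^{*}_{\mathbf{W},{\bf w}}),\mathcal{S}}T^{*}_{\mathbf{W},{\bf w}}=T^{*}_{\mathbf{W},{\bf w}}$, whence $A'T^{*}_{\mathbf{W},{\bf w}}=B(T^{*}_{\mathbf{W},{\bf w}}B)^{\dagger}T^{*}_{\mathbf{W},{\bf w}}=\pi_{\mathcal{V},\mathcal{W}^{\perp}}$; and since the projection has image ${\rm Im}(T^{*}_{\mathbf{W},{\bf w}})={\rm Im}(T^{*}_{\mathbf{W},{\bf w}}B)$ (using ${\rm Im}(B)=\mathcal{V}$ and ${\rm Ker}(T^{*}_{\mathbf{W},{\bf w}})=\mathcal{W}^{\perp}$), the image of $A'$ coincides with that of $A=B(T^{*}_{\mathbf{W},{\bf w}}B)^{\dagger}$, namely $\mathcal{V}$. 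Thus every family $\{(A'M_iK_{\mathcal{W}},v_i)\}$ is a component preserving oblique dual. Conversely, taking $\mathcal{S}={\rm Im}(T^{*}_{\mathbf{W},{\bf w}})^{\perp}$ turns the projection into the orthogonal one onto ${\rm Im}(T^{*}_{\mathbf{W},{\bf w}})={\rm Im}(T^{*}_{\mathbf{W},{\bf w}}B)$, and since $(T^{*}_{\mathbf{W},{\bf w}}B)^{\dagger}$ vanishes on ${\rm Im}(T^{*}_{\mathbf{W},{\bf w}}B)^{\perp}$ we get $A'=A$, so every $A=B(T^{*}_{\mathbf{W},{\bf w}}B)^{\dagger}$ is recovered; this yields the second family.

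The main obstacle I anticipate is the bookkeeping in the first description: correctly accommodating the weights ${\bf w}$ in the factorization $HT^{*}_{\mathbf{W},{\bf w}}=T_{\mathbf{Z},{\bf w}}(HM_iT^{*}_{\mathbf{W},{\bf w}})_{i\in I}$ and confirming that $(\mathbf{Z},{\bf w})$ is genuinely a fusion frame sequence, since it is this factorization that makes the expression $\pi_{\mathcal{V},\mathcal{W}^{\perp}}(HT^{*}_{\mathbf{W},{\bf w}})^{\dagger}Z_i$ legitimate and identifies it with $AM_iK_{\mathcal{W}}$. Everything else is a formal substitution into Theorem~\ref{T V,v dual fusion frame sii Vi=ApiWj} and Remark~\ref{R conjunto inversas oblicuas de TWw*}, together with the defining fixed-point property of the oblique projections.
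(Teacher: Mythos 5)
Your argument is correct. The first half coincides with the paper's proof: both obtain (\ref{expr1odff}) by feeding the parametrization $A=\pi_{\mathcal{V},\mathcal{W}^{\perp}}(HT^{*}_{\mathbf{W},\mathbf{w}})^{\dagger}H$ from Remark~\ref{R conjunto inversas oblicuas de TWw*} into Theorem~\ref{T V,v dual fusion frame sii Vi=ApiWj} and setting $Z_i=HM_iK_{\mathcal{W}}$; like the paper, you leave the verification that $(\mathbf{Z},\mathbf{w})$ is a fusion frame sequence satisfying the stated factorization as a routine check, though you at least flag the weight bookkeeping that the paper passes over silently. For the second display your route genuinely differs. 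The paper does not re-parametrize: it writes $\pi_{\mathcal{V},\mathcal{W}^{\perp}}=RT^{*}_{\mathbf{W},\mathbf{w}}$ with $R=B(T^{*}_{\mathbf{W},\mathbf{w}}B)^{\dagger}$, substitutes this into the already established expression $V_i=\pi_{\mathcal{V},\mathcal{W}^{\perp}}(HT^{*}_{\mathbf{W},\mathbf{w}})^{\dagger}HM_iK_{\mathcal{W}}$, and uses the identity $T^{*}_{\mathbf{W},\mathbf{w}}(HT^{*}_{\mathbf{W},\mathbf{w}})^{\dagger}H=\pi_{{\rm Im}(T^{*}_{\mathbf{W},\mathbf{w}}),{\rm Ker}(H)}$, thereby exhibiting the second family as a literal rewriting of the first with $\mathcal{S}={\rm Ker}(H)$. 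You instead prove a double inclusion directly: every $B(T^{*}_{\mathbf{W},\mathbf{w}}B)^{\dagger}\pi_{{\rm Im}(T^{*}_{\mathbf{W},\mathbf{w}}),\mathcal{S}}$ is an admissible oblique left inverse with image $\mathcal{V}$, and the choice $\mathcal{S}={\rm Im}(T^{*}_{\mathbf{W},\mathbf{w}})^{\perp}$ recovers every $B(T^{*}_{\mathbf{W},\mathbf{w}}B)^{\dagger}$. Both arguments are sound (your identities $\pi_{{\rm Im}(T^{*}_{\mathbf{W},\mathbf{w}}),\mathcal{S}}T^{*}_{\mathbf{W},\mathbf{w}}=T^{*}_{\mathbf{W},\mathbf{w}}$ and ${\rm Im}(T^{*}_{\mathbf{W},\mathbf{w}}B)={\rm Im}(T^{*}_{\mathbf{W},\mathbf{w}})$ check out, the latter because $T^{*}_{\mathbf{W},\mathbf{w}}$ annihilates $\mathcal{W}^{\perp}$ and $\mathcal{H}=\mathcal{V}\oplus\mathcal{W}^{\perp}$): the paper's version buys the explicit correspondence $\mathcal{S}={\rm Ker}(H)$ linking the two descriptions, while yours is self-contained and does not need to route the second display through the first.
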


\begin{proof}
By Theorem~\ref{T V,v dual fusion frame sii Vi=ApiWj} and
Remark~\ref{R conjunto inversas oblicuas de TWw*}, the $Q$-oblique
component preserving oblique dual fusion frames of $(\mathbf{W},{\bf
w})$ on $\mathcal{V}$ are

\centerline{$\{(V_i,v_{i})\}_{i\in
I}=\{(\pi_{\mathcal{V},{\mathcal{W}}^\bot}(HT^{*}_{{\bf W},{\bf
w}})^{\dagger}HM_i(K_{\mathcal{W}}),v_i)\}_{i\in I},$}

\noindent where $H\in L(K_{\mathcal{W}},\mathcal{H})$ is such that
$\overline{{\rm Im}(H)}={\rm Im}(H)$ and $K_{\mathcal{W}}={\rm
Ker}(H)\oplus {\rm Im}(T^{*}_{{\bf W},{\bf w}}).$ Taking
$Z_i=HM_i(K_{\mathcal{W}})$ we get (\ref{expr1odff}).

From Remark~\ref{R conjunto inversas oblicuas de TWw*},
$\pi_{\mathcal{V},{\mathcal{W}}^\bot}=RT^{*}_{{\bf W},{\bf w}},$
where $R=B(T^*_{{\bf W},{\bf w}}B)^{\dagger}$ with $B\in
L(\mathcal{H},K_{\mathcal{W}})$ and ${\rm Im}(B)=\mathcal{V}$. So we
can write, by (\ref{expr1odff}),

\centerline{$V_i=RT^{*}_{{\bf W},{\bf w}}(HT^{*}_{{\bf W},{\bf
w}})^{\dagger}HM_i(K_{\mathcal{W}})=R\pi_{ {\rm Im}(T^{*}_{{\bf
W},{\bf w}}),\mathcal{S}}M_i(K_{\mathcal{W}}),$}

\noindent where $\mathcal{S}={\rm Ker}(H).$
\end{proof}

We will show now that we can construct component preserving oblique
dual fusion frame systems from a given fusion frame for a closed
subspace of $\mathcal{H}$ via local dual frames and an oblique left
inverse of its analysis operator.

\begin{prop}\label{P A TW T fusion frame system dual}
Let $\mathcal{V}$ and $\mathcal{W}$ be two closed subspaces of
$\mathcal{H}$ such that $\mathcal{H}=\mathcal{V} \oplus
\mathcal{W}^{\perp} $. Let $(\mathbf{W}, {\bf w})$ be a fusion frame
for $\mathcal{W}$, $A\in \mathfrak{L}_{T^{*}_{\mathbf{W},{\bf
w}}}^{\mathcal{V},\mathcal{W}^{\perp}}$ and ${\bf v}$ be a
collection of weights such that $\inf_{i \in I}v_i>0$. For each
$i\in I$ let $\{f_{i,l}\}_{l\in L_i}$ and $\{\tilde{f}_{i,l}\}_{l\in
L_i}$ be dual frames for $W_i$, $\beta_i$ upper frame bound of
$\{f_{i,l}\}_{l\in L_i}$ for each $i \in I$ such that $\sup_{i \in
I}\beta_i<\infty$, $\tilde{\alpha}_i$ and $\tilde{\beta}_i$ frame
bounds of $\{\tilde{f}_{i,l}\}_{l\in L_i}$ for each $i \in I$ such
that $\sup_{i \in I}\widetilde{\beta}_i<\infty$, $\mathcal{G}_i=\{
\frac{1}{v_{i}}A(\chi_{i}(j)\tilde{f}_{i,l})_{j \in I}\}_{l\in L_i}$
and $V_{i}=\overline{{\rm span}}\mathcal{G}_i$. Then
\begin{enumerate}
 \item  $\mathcal{G}_i$ is a frame for $V_{i}$ with frame
bounds $\|A^{\dagger}\|^{-2}\frac{\widetilde{\alpha}_i}{v_{i}^{2}}$
and $\|A\|^{2}\frac{\widetilde{\beta}_i}{v_{i}^{2}}.$
 \item $(\mathbf{V},\mathbf{v},\mathcal{G})$ is a component preserving $Q_{A,\mathbf{v}}$-oblique dual fusion frame system of $(\mathbf{W}, {\bf w},
 \mathcal{F})$ on $\mathcal{V}$.
\end{enumerate}
\end{prop}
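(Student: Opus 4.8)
The plan is to recognise that, for each $i\in I$, the family $\mathcal{G}_i$ is the image under the bounded operator $\frac{1}{v_i}A$ of a copy of the dual frame $\{\tilde f_{i,l}\}_{l\in L_i}$ placed in the $i$-th summand of $K_{\mathcal{W}}$. Setting $\phi_{i,l}:=(\chi_i(j)\tilde f_{i,l})_{j\in I}\in K_{\mathcal{W}}$, the elements of $\mathcal{G}_i$ are $g_{i,l}=\frac{1}{v_i}A\phi_{i,l}$, and $\{\phi_{i,l}\}_{l\in L_i}$ is precisely the image of $\{\tilde f_{i,l}\}_{l\in L_i}$ under the canonical isometry $W_i\to M_iK_{\mathcal{W}}$; hence it is a frame for the closed subspace $M_iK_{\mathcal{W}}$ with the same bounds $\tilde\alpha_i,\tilde\beta_i$. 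With this reduction, (1) becomes an instance of the behaviour of a frame under a bounded operator with closed range, while (2) follows by combining (1) with Lemma~\ref{L Vi=ApiWj entonces V,v dual fusion frame} and the identity $C_{\mathcal{G}}C_{\mathcal{F}}^{*}=Q_{A,{\bf v}}$.

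For (1), I would compute, for $f\in V_i$, that $\sum_{l\in L_i}|\langle f,g_{i,l}\rangle|^{2}=\frac{1}{v_i^{2}}\sum_{l\in L_i}|\langle M_iA^{*}f,\phi_{i,l}\rangle|^{2}$, using $M_i\phi_{i,l}=\phi_{i,l}$ and the selfadjointness of $M_i$. Inserting $M_iA^{*}f\in M_iK_{\mathcal{W}}$ into the frame inequalities for $\{\phi_{i,l}\}_{l\in L_i}$ gives $\frac{\tilde\alpha_i}{v_i^{2}}\|M_iA^{*}f\|^{2}\le\sum_{l\in L_i}|\langle f,g_{i,l}\rangle|^{2}\le\frac{\tilde\beta_i}{v_i^{2}}\|M_iA^{*}f\|^{2}$. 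The estimate $\|M_iA^{*}f\|\le\|A^{*}f\|\le\|A\|\,\|f\|$ yields the upper frame bound $\|A\|^{2}\tilde\beta_i/v_i^{2}$. For the lower bound I would use that ${\rm Im}(A)=\mathcal{V}$ is closed, so $A^{\dagger}$ exists and $A$ is bounded below by $\|A^{\dagger}\|^{-1}$ on ${\rm Ker}(A)^{\perp}$; the claimed constant $\|A^{\dagger}\|^{-2}\tilde\alpha_i/v_i^{2}$ then reduces to a lower estimate of the form $\|M_iA^{*}f\|\ge\|A^{\dagger}\|^{-1}\|f\|$ for $f\in V_i=\overline{A(M_iK_{\mathcal{W}})}$, which is the delicate point flagged below. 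This part may alternatively be phrased as an application of \cite[Proposition 5.3.1]{Christensen (2016)} to the closed-range operator $\frac{1}{v_i}A$, as already used in the proof of Proposition~\ref{P oblique dual system entonces dual system}.

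For (2), I would first observe that $Q_{A,{\bf v}}$ is a well-defined bounded operator: since $\inf_{i}v_i>0$, $\|Q_{A,{\bf v}}(f_j)_{j\in I}\|^{2}=\sum_i\frac{1}{v_i^{2}}\|AM_i(f_j)_{j\in I}\|^{2}\le\frac{\|A\|^{2}}{(\inf_i v_i)^{2}}\|(f_j)_{j\in I}\|^{2}$, and a short computation using $\sum_iM_i={\rm Id}$ gives $A=T_{\mathbf{V},{\bf v}}Q_{A,{\bf v}}$. By (1) and $\sup_i\tilde\beta_i<\infty$ the upper bounds of the $\mathcal{G}_i$ are uniform, so by Theorem~\ref{T wF marco sii WwF fusion frame system} $(\mathbf{V},{\bf v})$ is a Bessel fusion sequence for $\mathcal{V}$, and $V_i=\overline{A(M_iK_{\mathcal{W}})}$. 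Lemma~\ref{L Vi=ApiWj entonces V,v dual fusion frame} then shows that $(\mathbf{V},{\bf v})$ is a $Q_{A,{\bf v}}$-component preserving oblique dual fusion frame of $(\mathbf{W},{\bf w})$ on $\mathcal{V}$; in particular, by Lemma~\ref{L equivalencias}, $(\mathbf{V},{\bf v})$ is a fusion frame for $\mathcal{V}$, so with (1) it is a fusion frame system. Finally I would identify $Q_{A,{\bf v}}$ with $C_{\mathcal{G}}C_{\mathcal{F}}^{*}$: by Remark~\ref{R Q Mi sum Wj subset Mi sum Vj}, $C_{\mathcal{G}}C_{\mathcal{F}}^{*}(f_j)_{j\in I}=(T_{\mathcal{G}_i}T_{\mathcal{F}_i}^{*}f_i)_{i\in I}$, and $T_{\mathcal{G}_i}T_{\mathcal{F}_i}^{*}f_i=\sum_{l}\langle f_i,f_{i,l}\rangle\frac{1}{v_i}A\phi_{i,l}=\frac{1}{v_i}A\,M_i(f_j)_{j\in I}$, the last equality using that $\{\tilde f_{i,l}\}$ is a dual frame of $\{f_{i,l}\}$ on $W_i$, i.e.\ $\sum_l\langle f_i,f_{i,l}\rangle\tilde f_{i,l}=f_i$. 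Thus $C_{\mathcal{G}}C_{\mathcal{F}}^{*}=Q_{A,{\bf v}}$, which is component preserving, and Definition~\ref{D oblique dual fusion frame system} gives the conclusion.

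I expect the main obstacle to be the lower frame bound in (1): one must bound $\|M_iA^{*}f\|$ from below for $f\in V_i$, equivalently show that $A$ (or $AM_i$) is bounded below on the $i$-th summand with the uniform constant $\|A^{\dagger}\|^{-1}$. This does not follow directly from $A$ being bounded below on ${\rm Ker}(A)^{\perp}$, because $M_iA^{*}f$ is only a compression of $A^{*}f$; the argument should exploit the specific structure of $A$ as an element of $\mathfrak{L}_{T^{*}_{\mathbf{W},{\bf w}}}^{\mathcal{V},\mathcal{W}^{\perp}}$ together with the hypotheses (as in \cite[Remark 3.6]{Heineken-Morillas-Benavente-Zakowicz (2014)}) ensuring that $(\mathbf{V},{\bf v})$ is Bessel and $Q_{A,{\bf v}}$ bounded.
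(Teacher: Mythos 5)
Your proposal is correct and follows essentially the same route as the paper: part (1) via \cite[Proposition 5.3.1]{Christensen (2016)} applied to the closed-range operator $\frac{1}{v_i}A$, Besselness via the estimate $\sum_{i,l}|\langle A^{*}g,\phi_{i,l}\rangle|^{2}\le\|A\|^{2}\sup_{i}\tilde{\beta}_{i}\|g\|^{2}$, the identification $C_{\mathcal{G}}C_{\mathcal{F}}^{*}=Q_{A,\mathbf{v}}$ through the local duality $\sum_{l}\langle h_{i},f_{i,l}\rangle\tilde{f}_{i,l}=h_{i}$, and the conclusion from Lemma~\ref{L Vi=ApiWj entonces V,v dual fusion frame}. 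The only point to tighten is that Theorem~\ref{T wF marco sii WwF fusion frame system} yields Besselness of $(\mathbf{V},\mathbf{v})$ from the global Bessel bound for ${\bf v}\mathcal{G}$ (obtained by summing your displayed inequality in part (1) over $i$, using $\sum_{i}\|M_{i}A^{*}g\|^{2}=\|A^{*}g\|^{2}$) together with the uniform local bounds, not from the uniform local upper bounds alone --- this is exactly the computation the paper writes out.
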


\begin{proof}By
\cite[Proposition 5.3.1]{Christensen (2016)}, (1) holds.

Let $g \in \mathcal{H}$. We have $\sum_{i \in I}\sum_{l\in
L_{i}}|\langle g,A(\chi_{i}(j)\tilde{f}_{i,l})_{j \in
I}\rangle|^{2}=\sum_{i \in I}\sum_{l\in L_{i}}|\langle
A^{*}g,(\chi_{i}(j)\tilde{f}_{i,l})_{j \in I}\rangle|^{2}\leq\sum_{i
\in I}\widetilde{\beta}_i\|M_{i}A^{*}g\|^{2}\leq\sup_{i \in
I}\widetilde{\beta}_i\sum_{i \in
I}\|M_{i}A^{*}g\|^{2}=\|A^{*}g\|^{2}\sup_{i \in
I}\widetilde{\beta}_i\leq\|g\|^{2}\|A\|^{2}\sup_{i \in
I}\widetilde{\beta}_i$. As a consequence of Theorem~\ref{T wF marco
sii WwF fusion frame system}, $(\mathbf{V},\mathbf{v},\mathcal{G})$
is a Bessel fusion system for $\mathcal{H}$ with upper frame bound
$\|A\|^{2}\sup_{i \in I}\widetilde{\beta}_i$.

If $(h_i)_{i\in I} \in K_{\mathcal{W}}$, then $Q_{A,\mathbf{v}}$ is
a well defined bounded operator with
$||Q_{A,\mathbf{v}}||\leq\frac{||A||^{2}}{\inf_{i \in I}v_{i}^{2}}$
and $Q_{A,\mathbf{v}}(h_i)_{i\in I}=(\frac{1}{v_{i}}AM_{i}(h_{j})_{j
\in I})_{i\in I}=( \frac{1}{v_{i}}AM_{i}(\sum_{l\in
L_j}<h_j,f_j^l>\tilde{f}^l_j)_{j \in I})_{i\in I}=(\sum_{l\in
L_i}<h_i,f_{i,l}> \frac{1}{v_{i}}A(\chi_{i}(j)\tilde{f}_{i,l})_{j
\in I})_{i\in I}=C_{\mathcal{G}}C_{\mathcal{F}}^{*}(h_i)_{i\in I}$.
Hence (3) follows from (1), (2) and Lemma~\ref{L Vi=ApiWj entonces
V,v dual fusion frame}.\end{proof}

The next proposition presents a way to construct component
preserving oblique dual fusion frame systems from a given frame for
a subspace, using an oblique left inverse of its analysis operator.

\begin{prop}\label{P A TF T fusion frame system dual}
Let $\mathcal{V}$ and $\mathcal{W}$ be two closed subspaces of
$\mathcal{H}$ such that $\mathcal{H}=\mathcal{V} \oplus
\mathcal{W}^{\perp} $. Let ${\bf w}$ and ${\bf v}$ be two
collections of weights such that $\inf_{i \in I}v_i>0$. Let ${\bf
w}\mathcal{F}$ be a frame for $\mathcal{W}$ with local upper frame
bound $\beta_i$ for each $i \in I$ such that $\sup_{i \in
I}\beta_i<\infty$, $A \in \mathfrak{L}_{T_{{\bf
w}\mathcal{F}}^{*}}^{\mathcal{V},\mathcal{W}^{\perp}}$ and
$\{\{e_{i,l}\}_{l\in L_i}\}_{i\in I}$ be the standard basis for
$\oplus_{i \in I}\ell^2(L_{i})$. For each $i \in I$, set
$W_{i}=\overline{\text{span}}\{f_{i,l}\}_{l\in L_i}$ and
$V_{i}=\overline{\text{span}}\{\frac{1}{v_{i}}Ae_{i,l}\}_{l\in
L_i}$. Let $\mathcal{G}=\{\{\frac{1}{v_{i}}Ae_{i,l}\}_{l\in
L_i}\}_{i \in I}.$ Then
\begin{enumerate}
 \item $\{\frac{1}{v_{i}}Ae_{i,l}\}_{l\in L_i}$ is a frame for $V_{i}$ with frame
bounds $\frac{\|A^{\dagger}\|^{-2}}{v_{i}^{2}}$ and
$\frac{\|A\|^{2}}{v_{i}^{2}}$.
 \item $(\mathbf{V}, {\bf v}, \mathcal{G})$ is an oblique dual fusion frame system of $(\mathbf{W}, {\bf w},
 \mathcal{F})$ on $\mathcal{V}$.
\end{enumerate}
\end{prop}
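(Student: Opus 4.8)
The plan is to mirror the proof of Proposition~\ref{P A TW T fusion frame system dual}, replacing the local dual frames by the images under $A$ of the standard basis. Throughout, let $\iota_i:\ell^2(L_i)\to\oplus_{j\in I}\ell^2(L_j)$ denote the canonical isometry onto the $i$-th coordinate block, so that the synthesis operator of $\mathcal{G}_i=\{\frac{1}{v_i}Ae_{i,l}\}_{l\in L_i}$ is exactly $\frac{1}{v_i}A\iota_i$ and its image is $V_i$.

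For (1), since $\{e_{i,l}\}_{l\in L_i}$ is an orthonormal basis of $\ell^2(L_i)$, I would apply \cite[Proposition 5.3.1]{Christensen (2016)} to the bounded operator $\frac{1}{v_i}A\iota_i$: this yields that $\mathcal{G}_i$ is a frame for $V_i={\rm Im}(\frac{1}{v_i}A\iota_i)$ with frame bounds $\|(\frac{1}{v_i}A\iota_i)^{\dagger}\|^{-2}$ and $\|\frac{1}{v_i}A\iota_i\|^{2}$. Controlling the norms of $A\iota_i$ and $(A\iota_i)^{\dagger}$ by $\|A\|$ and $\|A^{\dagger}\|$, exactly as in the proof of Proposition~\ref{P A TW T fusion frame system dual}, gives the stated bounds $\frac{\|A^{\dagger}\|^{-2}}{v_i^{2}}$ and $\frac{\|A\|^{2}}{v_i^{2}}$.

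For (2), I would verify the ingredients of Definition~\ref{D oblique dual fusion frame system}. The cardinality condition $|\mathcal{F}_i|=|\mathcal{G}_i|$ holds since both families are indexed by $L_i$, and the uniform upper bounds are finite: $\sup_i\beta_i<\infty$ by hypothesis, while $\widetilde{\beta}_i=\frac{\|A\|^{2}}{v_i^{2}}\le\frac{\|A\|^{2}}{\inf_i v_i^{2}}<\infty$ by (1) and $\inf_i v_i>0$. That $(\mathbf{W},{\bf w},\mathcal{F})$ is a fusion frame system follows from Theorem~\ref{T wF marco sii WwF fusion frame system}, since ${\bf w}\mathcal{F}$ is a frame for $\mathcal{W}$. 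For $(\mathbf{V},{\bf v},\mathcal{G})$, I observe that ${\bf v}\mathcal{G}=\{Ae_{i,l}\}_{i\in I,\,l\in L_i}$ is precisely the image of the standard basis of $\oplus_{j\in I}\ell^2(L_j)$ under $A$; by \cite[Proposition 5.3.1]{Christensen (2016)} this is a frame for ${\rm Im}(A)=\mathcal{V}$, so in particular $\overline{\text{span}}\bigcup_i V_i=\mathcal{V}$, and Theorem~\ref{T wF marco sii WwF fusion frame system} then shows $(\mathbf{V},{\bf v})$ is a fusion frame for $\mathcal{V}$ and hence $(\mathbf{V},{\bf v},\mathcal{G})$ is a fusion frame system.

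The heart of the argument is the operator identity $T_{{\bf V},{\bf v}}C_{\mathcal{G}}C_{\mathcal{F}}^{*}=AC_{\mathcal{F}}^{*}$. I would establish it by computing, for $(h_j)_{j\in I}\in K_{\mathcal{W}}$, that the $i$-th component of $C_{\mathcal{G}}C_{\mathcal{F}}^{*}(h_j)_j$ equals $T_{\mathcal{G}_i}T_{\mathcal{F}_i}^{*}h_i=\frac{1}{v_i}A\iota_i T_{\mathcal{F}_i}^{*}h_i=\frac{1}{v_i}AC_{\mathcal{F}}^{*}M_i(h_j)_j$; applying $T_{{\bf V},{\bf v}}$, the weight $v_i$ cancels and $\sum_i M_i=I$ collapses the sum to $AC_{\mathcal{F}}^{*}$. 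Composing on the right with $T_{\mathbf{W},{\bf w}}^{*}$ and using the relation $T_{{\bf w}\mathcal{F}}=T_{\mathbf{W},{\bf w}}C_{\mathcal{F}}$ (so $C_{\mathcal{F}}^{*}T_{\mathbf{W},{\bf w}}^{*}=T_{{\bf w}\mathcal{F}}^{*}$), I obtain
\[
T_{{\bf V},{\bf v}}C_{\mathcal{G}}C_{\mathcal{F}}^{*}T_{\mathbf{W},{\bf w}}^{*}=AT_{{\bf w}\mathcal{F}}^{*}=\pi_{\mathcal{V},\mathcal{W}^{\perp}},
\]
the last equality because $A\in\mathfrak{L}_{T_{{\bf w}\mathcal{F}}^{*}}^{\mathcal{V},\mathcal{W}^{\perp}}$. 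Thus $({\bf V},{\bf v})$ is a $C_{\mathcal{G}}C_{\mathcal{F}}^{*}$-oblique dual fusion frame of $(\mathbf{W},{\bf w})$ (the fusion frame property being automatic by Lemma~\ref{L equivalencias}), which is exactly Definition~\ref{D oblique dual fusion frame system}. The main obstacle is the bookkeeping in this identity—correctly tracking how $A$ acts blockwise through $C_{\mathcal{G}}C_{\mathcal{F}}^{*}$ and how the weights $v_i$ cancel—together with confirming that $(\mathbf{V},{\bf v})$ genuinely spans $\mathcal{V}$; once $T_{{\bf V},{\bf v}}C_{\mathcal{G}}C_{\mathcal{F}}^{*}=AC_{\mathcal{F}}^{*}$ is in hand, the conclusion is a direct application of the left-inverse property of $A$.
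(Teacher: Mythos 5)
Your proof is correct, and for part (2) it takes a more self-contained route than the paper. The paper disposes of (2) in two citations: it checks that ${\bf v}\mathcal{G}=\{Ae_{i,l}\}_{i\in I, l\in L_i}$ is Bessel (via the computation $\sum_{i,l}|\langle g,Ae_{i,l}\rangle|^{2}=\|A^{*}g\|^{2}$ and Theorem~\ref{T wF marco sii WwF fusion frame system}), invokes \cite[Lemma B.1]{Christensen-Eldar (2004)} to conclude that ${\bf v}\mathcal{G}$ is an oblique dual frame of ${\bf w}\mathcal{F}$ on $\mathcal{V}$, and then transfers this to the fusion level by Theorem~\ref{T dual fusion frame systems}. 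You instead verify Definition~\ref{D oblique dual fusion frame system} directly, establishing $T_{{\bf V},{\bf v}}C_{\mathcal{G}}C_{\mathcal{F}}^{*}=AC_{\mathcal{F}}^{*}$ by a blockwise computation and then composing with $T_{\mathbf{W},{\bf w}}^{*}$ via $C_{\mathcal{F}}^{*}T_{\mathbf{W},{\bf w}}^{*}=T_{{\bf w}\mathcal{F}}^{*}$. The two arguments are mathematically the same at bottom --- your identity $T_{{\bf V},{\bf v}}C_{\mathcal{G}}=A$ is exactly the relation $T_{{\bf v}\mathcal{G}}=T_{{\bf V},{\bf v}}C_{\mathcal{G}}$ specialized to this $\mathcal{G}$, and $AT_{{\bf w}\mathcal{F}}^{*}=\pi_{\mathcal{V},\mathcal{W}^{\perp}}$ is the content of Lemma B.1 --- but your version unwinds the citations into an explicit computation, which makes the weight cancellation and the role of $A$ transparent; the paper's version buys brevity and reuses Theorem~\ref{T dual fusion frame systems}. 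One small caution: your claim that Theorem~\ref{T wF marco sii WwF fusion frame system} upgrades $(\mathbf{V},{\bf v})$ from Bessel to fusion frame requires a uniform positive lower bound on the local frame bounds $\|A^{\dagger}\|^{-2}/v_{i}^{2}$, hence $\sup_{i}v_{i}<\infty$, which is not among the hypotheses; your parenthetical appeal to Lemma~\ref{L equivalencias} is the correct way to get the fusion frame property once the duality identity is in hand, so keep that and drop the stronger claim.
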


\begin{proof}
Part (1) is a consecuence of \cite[Proposition 5.3.1]{Christensen
(2016)}.

If $g \in \mathcal{H}$, $\sum_{i \in I}\sum_{l\in L_{i}}|\langle
g,Ae_{i,l}\rangle|^{2}=\sum_{i \in I}\sum_{l\in L_{i}}|\langle
A^{*}g,e_{i,l}\rangle|^{2}=\|A^{*}g\|^{2}\leq\|A\|^{2}\|g\|^{2}$. By
Theorem~\ref{T wF marco sii WwF fusion frame system},
$(\mathbf{V},\mathbf{v},\mathcal{G})$ is a Bessel fusion system for
$\mathcal{H}$ with upper frame bound $\|A\|^{2}$.

By \cite[Lemma B.1]{Christensen-Eldar (2004)}, ${\bf v}\mathcal{G}$
is an oblique dual frame of ${\bf w}\mathcal{F}$ on $\mathcal{V}$.
So, part (2) follows from Theorem~\ref{T dual fusion frame
systems}.\end{proof}

\section{The canonical oblique dual fusion frame} \label{Ej dual canonico}

Let $(\mathbf{W},{\bf w})$ be a fusion frame for $\mathcal{W}$. Let
$A=\pi_{\mathcal{V},\mathcal{W}^{\perp}}S_{\mathbf{W},{\bf
w}}^{\dagger}T_{\mathbf{W},{\bf w}} \in
\mathfrak{L}_{T^{*}_{\mathbf{W},{\bf
w}}}^{\mathcal{V},\mathcal{W}^{\perp}}$ and ${\bf v}$ be a sequence
of weights such that
$(\pi_{\mathcal{V},\mathcal{W}^{\perp}}S_{\mathbf{W},{\bf
w}}^{\dagger}\mathbf{W},{\bf v})$ is a Bessel fusion sequence for
$\mathcal{V}$. Assume that $Q_{A,{\bf v}} : K_{\mathcal{W}}
\rightarrow \oplus_{i \in I}
\pi_{\mathcal{V},\mathcal{W}^{\perp}}S_{\mathbf{W},{\bf
w}}^{\dagger}W_i$ given by $Q_{A,{\bf v}}(f_i)_{i \in
I}=(\frac{w_{i}}{v_{i}}\pi_{\mathcal{V},\mathcal{W}^{\perp}}S_{\mathbf{W},{\bf
w}}^{\dagger}f_{i})_{i \in I}$ is a well defined bounded operator
(see \cite[Remark 3.6]{Heineken-Morillas-Benavente-Zakowicz
(2014)}). By Lemma~\ref{L Vi=ApiWj entonces V,v dual fusion frame},
$(\pi_{\mathcal{V},\mathcal{W}^{\perp}}S_{\mathbf{W},{\bf
w}}^{\dagger}\mathbf{W},{\bf v})$ is a $Q_{A,{\bf v}}$-component
preserving dual of $(\mathbf{W},{\bf w})$ on $\mathcal{V}$. In
particular we can take $\mathbf{v}=\mathbf{w}$. In fact,
$(S_{\mathbf{W},{\bf w}}^{\dagger}\mathbf{W},{\bf w})$ is a
$Q_{S_{\mathbf{W},{\bf w}}^{\dagger}T_{\mathbf{W},{\bf w}},{\bf
w}}$-component preserving dual of $(\mathbf{W},{\bf w})$ (see
\cite[Example 3.7]{Heineken-Morillas-Benavente-Zakowicz (2014)}).
Then, by Corollary~\ref{C dual entonces oblique dual},
$(\pi_{\mathcal{V},\mathcal{W}^{\perp}}S_{\mathbf{W},{\bf
w}}^{\dagger}\mathbf{W},{\bf w})$ is a $Q_{A,{\bf w}}$-oblique
component preserving dual of $(\mathbf{W},{\bf w})$ on
$\mathcal{V}$.

Given $\mathbf{v}$ we will refer to this dual as the {\em canonical
oblique dual with weights ${\bf v}$} and to

\centerline{$Q_{\pi_{\mathcal{V},\mathcal{W}^{\perp}}S_{\mathbf{W},{\bf
w}}^{\dagger}T_{\mathbf{W},{\bf w}},{\bf
v}}^{*}T_{\pi_{\mathcal{V},\mathcal{W}^{\perp}}S_{\mathbf{W},{\bf
w}}^{\dagger}\mathbf{W},{\bf v}}^{*}f=T_{\mathbf{W},{\bf
w}}^{*}S_{\mathbf{W},{\bf
w}}^{\dagger}\pi_{\mathcal{W},\mathcal{V}^{\perp}}f$}

\noindent as the {\em oblique fusion frame coefficients} of $f \in
\mathcal{H}$ with respect to $(\mathbf{W},{\bf w})$ on
$\mathcal{V}$. We note that if $\mathcal{V}=\mathcal{W}$, the
canonical oblique dual fusion frames reduce to the canonical dual
fusion frames as defined in
\cite{Heineken-Morillas-Benavente-Zakowicz (2014), Heineken-Morillas
(2014)}.

Furthermore, if in Definition~\ref{D oblique dual fusion frame
system} $({\bf V},{\bf v})$ is a canonical oblique dual fusion frame
of $(\mathbf{W},{\bf w})$  we say that $({\bf V},{\bf v},
\mathcal{G})$ is a \emph{canonical oblique dual fusion frame system}
of $(\mathbf{W},{\bf w}, \mathcal{F})$.

The theorem below follows from Theorem~\ref{T V,v dual fusion frame
sii Vi=ApiWj}. It gives a characterizations of canonical oblique
dual fusion frames and can be proved in a similar way as
\cite[Theorem 2]{Christensen-Eldar (2006)}.
\begin{thm}
Let $\mathcal{V}$ and $\mathcal{W}$ be two finite-dimensional
subspaces of $\mathcal{H}$ such that $\mathcal{H}=\mathcal{V} \oplus
\mathcal{W}^{\perp} $. Assume $(\mathbf{W},{\bf w})$ is a fusion
frame for a subspace $\mathcal{W}\subseteq \mathcal{H}$. Then the
canonical $Q$-oblique dual fusion frames of $(\mathbf{W},{\bf w})$
on $\mathcal{V}$ are

\centerline{$\{(V_i,v_{i})\}_{i\in
I}=\{(\pi_{\mathcal{V},{\mathcal{W}}^\bot}(HT^{*}_{{\bf W},{\bf
w}})^{\dagger}Z_i,v_i)\}_{i\in I},$}

\noindent where $(\mathbf{Z},{\bf w})$ is a fusion frame sequence
that satisfies $HT^{*}_{{\bf W},{\bf w}}=T_{{\bf Z},{\bf
w}}(HM_iT^{*}_{{\bf W},{\bf w}})_{i \in I},$ for some $H \in
L(K_{\mathcal{W}}, \mathcal{H})$ with ${\rm Ker}(H)={\rm
Ker}(T_{{\bf W},{\bf w}})$ and
$Q=Q_{\pi_{\mathcal{V},{\mathcal{W}}^\bot}(HT^{*}_{{\bf W},{\bf
w}})^{\dagger}H,{\bf v}}$. Also

\centerline{$\{(V_i,v_{i})\}_{i\in I}=\{(B(T^{*}_{{\bf W},{\bf
w}}B)^{\dagger}M_i(K_{\mathcal{W}}),v_i)\}_{i\in I},$}

\noindent where $B\in L(K_{\mathcal{W}},\mathcal{H})$ is such that
${\rm Im}(B)=\mathcal{V}.$
\end{thm}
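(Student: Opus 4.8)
The plan is to read off the canonical oblique duals from the characterization in the preceding theorem (that is, from Theorem~\ref{T V,v dual fusion frame sii Vi=ApiWj} together with Remark~\ref{R conjunto inversas oblicuas de TWw*}) by isolating the single oblique left inverse that produces the canonical dual. By the discussion in Section~\ref{Ej dual canonico}, the canonical oblique dual on $\mathcal{V}$ is the $Q_{A_{0},{\bf v}}$-component preserving oblique dual fusion frame associated through Theorem~\ref{T V,v dual fusion frame sii Vi=ApiWj} with $A_{0}=\pi_{\mathcal{V},\mathcal{W}^{\perp}}S_{\mathbf{W},{\bf w}}^{\dagger}T_{\mathbf{W},{\bf w}}\in\mathfrak{L}_{T^{*}_{\mathbf{W},{\bf w}}}^{\mathcal{V},\mathcal{W}^{\perp}}$, with subspaces $V_{i}=A_{0}M_{i}K_{\mathcal{W}}$. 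First I would record the standard pseudo-inverse identity $S_{\mathbf{W},{\bf w}}^{\dagger}T_{\mathbf{W},{\bf w}}=(T_{\mathbf{W},{\bf w}}T^{*}_{\mathbf{W},{\bf w}})^{\dagger}T_{\mathbf{W},{\bf w}}=(T^{*}_{\mathbf{W},{\bf w}})^{\dagger}$, so that $A_{0}=\pi_{\mathcal{V},\mathcal{W}^{\perp}}(T^{*}_{\mathbf{W},{\bf w}})^{\dagger}$; note that $A_{0}$ is the unique element of $\mathfrak{L}_{T^{*}_{\mathbf{W},{\bf w}}}^{\mathcal{V},\mathcal{W}^{\perp}}$ vanishing on ${\rm Im}(T^{*}_{\mathbf{W},{\bf w}})^{\perp}$, since any oblique left inverse $R$ is already pinned down on ${\rm Im}(T^{*}_{\mathbf{W},{\bf w}})$ by $RT^{*}_{\mathbf{W},{\bf w}}=\pi_{\mathcal{V},\mathcal{W}^{\perp}}$. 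Everything then reduces to deciding which of the representatives in Remark~\ref{R conjunto inversas oblicuas de TWw*} coincide with $A_{0}$.

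For the first displayed family I would show that ${\rm Ker}(H)={\rm Ker}(T_{\mathbf{W},{\bf w}})={\rm Im}(T^{*}_{\mathbf{W},{\bf w}})^{\perp}$ forces $(HT^{*}_{\mathbf{W},{\bf w}})^{\dagger}H=(T^{*}_{\mathbf{W},{\bf w}})^{\dagger}$, whence $\pi_{\mathcal{V},\mathcal{W}^{\perp}}(HT^{*}_{\mathbf{W},{\bf w}})^{\dagger}H=A_{0}$. The verification is by evaluating both sides on ${\rm Im}(T^{*}_{\mathbf{W},{\bf w}})$ and on its orthogonal complement: on ${\rm Im}(T^{*}_{\mathbf{W},{\bf w}})^{\perp}={\rm Ker}(H)$ both operators vanish, while on the range the kernel hypothesis yields ${\rm Ker}(HT^{*}_{\mathbf{W},{\bf w}})={\rm Ker}(T^{*}_{\mathbf{W},{\bf w}})$, so that $(HT^{*}_{\mathbf{W},{\bf w}})^{\dagger}(HT^{*}_{\mathbf{W},{\bf w}})=\pi_{{\rm Ker}(T^{*}_{\mathbf{W},{\bf w}})^{\perp}}=(T^{*}_{\mathbf{W},{\bf w}})^{\dagger}T^{*}_{\mathbf{W},{\bf w}}$ forces agreement there. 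Applying Theorem~\ref{T V,v dual fusion frame sii Vi=ApiWj} to $A=A_{0}$ then gives $V_{i}=A_{0}M_{i}K_{\mathcal{W}}=\pi_{\mathcal{V},\mathcal{W}^{\perp}}(HT^{*}_{\mathbf{W},{\bf w}})^{\dagger}Z_{i}$ with $Z_{i}=HM_{i}K_{\mathcal{W}}$ and $Q=Q_{A_{0},{\bf v}}$, while the statements that $(\mathbf{Z},{\bf w})$ is a fusion frame sequence satisfying the displayed identity are inherited verbatim from the preceding theorem. For the converse I would simply take $H=T_{\mathbf{W},{\bf w}}$, which meets ${\rm Ker}(H)={\rm Ker}(T_{\mathbf{W},{\bf w}})$ and gives $Z_{i}=W_{i}$, recovering the canonical dual of Section~\ref{Ej dual canonico}.

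For the second displayed family I would prove the sharper statement $B(T^{*}_{\mathbf{W},{\bf w}}B)^{\dagger}=A_{0}$ for every $B\in L(K_{\mathcal{W}},\mathcal{H})$ with ${\rm Im}(B)=\mathcal{V}$. Because $T^{*}_{\mathbf{W},{\bf w}}|_{\mathcal{V}}$ is injective (Lemma~\ref{L T inyectiva en V}) and $\dim\mathcal{V}=\dim\mathcal{W}=\dim{\rm Im}(T^{*}_{\mathbf{W},{\bf w}})$, one has ${\rm Im}(T^{*}_{\mathbf{W},{\bf w}}B)=T^{*}_{\mathbf{W},{\bf w}}(\mathcal{V})={\rm Im}(T^{*}_{\mathbf{W},{\bf w}})$, so ${\rm Ker}((T^{*}_{\mathbf{W},{\bf w}}B)^{\dagger})={\rm Im}(T^{*}_{\mathbf{W},{\bf w}})^{\perp}$ and $B(T^{*}_{\mathbf{W},{\bf w}}B)^{\dagger}$ annihilates ${\rm Im}(T^{*}_{\mathbf{W},{\bf w}})^{\perp}$; a short computation with the pseudo-inverse and the decomposition $\mathcal{H}=\mathcal{V}\oplus\mathcal{W}^{\perp}$ gives $B(T^{*}_{\mathbf{W},{\bf w}}B)^{\dagger}T^{*}_{\mathbf{W},{\bf w}}=\pi_{\mathcal{V},\mathcal{W}^{\perp}}$. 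By the uniqueness noted in the first paragraph these two facts force $B(T^{*}_{\mathbf{W},{\bf w}}B)^{\dagger}=A_{0}$, and hence $V_{i}=A_{0}M_{i}K_{\mathcal{W}}=B(T^{*}_{\mathbf{W},{\bf w}}B)^{\dagger}M_{i}K_{\mathcal{W}}$, as claimed (any $B$ with ${\rm Im}(B)=\mathcal{V}$ works, so the converse is immediate). Equivalently, this formula can be extracted from the second family of the preceding theorem by choosing $\mathcal{S}={\rm Im}(T^{*}_{\mathbf{W},{\bf w}})^{\perp}$, which turns $\pi_{{\rm Im}(T^{*}_{\mathbf{W},{\bf w}}),\mathcal{S}}$ into the orthogonal projection and lets it be absorbed into $B(T^{*}_{\mathbf{W},{\bf w}}B)^{\dagger}$.

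The delicate points, and the main obstacle, are the two pseudo-inverse identities $(HT^{*}_{\mathbf{W},{\bf w}})^{\dagger}H=(T^{*}_{\mathbf{W},{\bf w}})^{\dagger}$ and $B(T^{*}_{\mathbf{W},{\bf w}}B)^{\dagger}=A_{0}$: they must be justified by careful tracking of the ranges and kernels of the composites $HT^{*}_{\mathbf{W},{\bf w}}$ and $T^{*}_{\mathbf{W},{\bf w}}B$ (all closed, as the relevant spaces are finite-dimensional), and it is precisely the kernel hypothesis on $H$, respectively the image hypothesis on $B$, that degrades the general oblique left inverse to the canonical one $A_{0}$.
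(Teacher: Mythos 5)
Your argument is correct and follows exactly the route the paper indicates: the paper gives no explicit proof here, merely stating that the result follows from Theorem~\ref{T V,v dual fusion frame sii Vi=ApiWj} (i.e.\ by specializing the preceding general characterization and Remark~\ref{R conjunto inversas oblicuas de TWw*} to the canonical left inverse $\pi_{\mathcal{V},\mathcal{W}^{\perp}}S_{\mathbf{W},{\bf w}}^{\dagger}T_{\mathbf{W},{\bf w}}$), which is precisely what you do. Your two pseudo-inverse identities $(HT^{*}_{\mathbf{W},{\bf w}})^{\dagger}H=(T^{*}_{\mathbf{W},{\bf w}})^{\dagger}$ and $B(T^{*}_{\mathbf{W},{\bf w}}B)^{\dagger}=\pi_{\mathcal{V},\mathcal{W}^{\perp}}(T^{*}_{\mathbf{W},{\bf w}})^{\dagger}$ are verified correctly and supply the details the paper leaves to the reader.
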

The following lemma implies that oblique fusion frame coefficients
are those which have minimal norm among all other coefficients.
\begin{lem}
Let $\mathcal{W}$ and $\mathcal{V}$ be two closed subspaces of
$\mathcal{H}$ such that
$\mathcal{H}=\mathcal{V}\oplus\mathcal{W}^{\perp}$. Let
$(\mathbf{W},{\bf w})$ be a fusion frame for $\mathcal{W}$ and $f
\in \mathcal{H}$. For all $(f_{i})_{i \in I} \in K_{\mathcal{W}}$
satisfying $T_{\mathbf{W},{\bf w}}(f_{i})_{i \in
I}=\pi_{\mathcal{W},\mathcal{V}^{\perp}}f$, we have

\centerline{$||(f_{i})_{i \in I}||^{2}=||T_{\mathbf{W},{\bf
w}}^{*}S_{\mathbf{W},{\bf
w}}^{\dagger}\pi_{\mathcal{W},\mathcal{V}^{\perp}}f||^{2}+||(f_{i})_{i
\in I}-T_{\mathbf{W},{\bf w}}^{*}S_{\mathbf{W},{\bf
w}}^{\dagger}\pi_{\mathcal{W},\mathcal{V}^{\perp}}f||^{2}.$}
\end{lem}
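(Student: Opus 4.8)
The plan is to recognize this as the standard ``minimal-norm'' orthogonal decomposition and to prove it by exhibiting the relevant Pythagorean splitting. First I would abbreviate $g:=\pi_{\mathcal{W},\mathcal{V}^{\perp}}f$, noting that $g\in\mathcal{W}$ since the image of the oblique projection $\pi_{\mathcal{W},\mathcal{V}^{\perp}}$ is $\mathcal{W}$, and set $x_{0}:=T_{\mathbf{W},{\bf w}}^{*}S_{\mathbf{W},{\bf w}}^{\dagger}g$, the candidate minimal-norm element appearing on the right-hand side of the identity.

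The first key step is to verify that $x_{0}$ is itself an admissible coefficient sequence, that is, $T_{\mathbf{W},{\bf w}}x_{0}=g$. Indeed $T_{\mathbf{W},{\bf w}}x_{0}=S_{\mathbf{W},{\bf w}}S_{\mathbf{W},{\bf w}}^{\dagger}g$, and since $(\mathbf{W},{\bf w})$ is a fusion frame for $\mathcal{W}$ the synthesis operator has closed range $\mathcal{W}$, so $\operatorname{Im}(S_{\mathbf{W},{\bf w}})=\mathcal{W}$ and $S_{\mathbf{W},{\bf w}}S_{\mathbf{W},{\bf w}}^{\dagger}=\pi_{\mathcal{W}}$, the orthogonal projection onto $\mathcal{W}$. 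As $g\in\mathcal{W}$ we get $\pi_{\mathcal{W}}g=g$, hence $T_{\mathbf{W},{\bf w}}x_{0}=g$.

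Next, given any $(f_{i})_{i\in I}\in K_{\mathcal{W}}$ with $T_{\mathbf{W},{\bf w}}(f_{i})_{i\in I}=g$, I would set $h:=(f_{i})_{i\in I}-x_{0}$, so that $T_{\mathbf{W},{\bf w}}h=g-g=0$, i.e. $h\in{\rm Ker}(T_{\mathbf{W},{\bf w}})$. The crucial orthogonality is then immediate: $x_{0}=T_{\mathbf{W},{\bf w}}^{*}(S_{\mathbf{W},{\bf w}}^{\dagger}g)\in{\rm Im}(T_{\mathbf{W},{\bf w}}^{*})$, while $h\in{\rm Ker}(T_{\mathbf{W},{\bf w}})={\rm Im}(T_{\mathbf{W},{\bf w}}^{*})^{\perp}$, whence $\langle x_{0},h\rangle=0$. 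Applying the Pythagorean identity to the decomposition $(f_{i})_{i\in I}=x_{0}+h$ yields $\|(f_{i})_{i\in I}\|^{2}=\|x_{0}\|^{2}+\|h\|^{2}$, which is exactly the claimed equality after substituting back $x_{0}=T_{\mathbf{W},{\bf w}}^{*}S_{\mathbf{W},{\bf w}}^{\dagger}\pi_{\mathcal{W},\mathcal{V}^{\perp}}f$ and $h=(f_{i})_{i\in I}-x_{0}$.

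This argument is essentially routine, and the only point that requires care is the first step, namely justifying $S_{\mathbf{W},{\bf w}}S_{\mathbf{W},{\bf w}}^{\dagger}g=g$. This rests on the facts recalled in Section~2, that a fusion frame for $\mathcal{W}$ has synthesis operator with closed range $\mathcal{W}$ and that consequently ${\rm Im}(S_{\mathbf{W},{\bf w}})=\mathcal{W}$, so that the Moore--Penrose product $S_{\mathbf{W},{\bf w}}S_{\mathbf{W},{\bf w}}^{\dagger}$ is the orthogonal projection onto $\mathcal{W}$; combined with $g\in\mathcal{W}$ this closes the gap. No genuine obstacle arises beyond careful bookkeeping of these range and kernel identities.
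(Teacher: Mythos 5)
Your proof is correct and follows essentially the same route as the paper: write $(f_i)_{i\in I}$ as $x_0+h$ with $x_0=T_{\mathbf{W},{\bf w}}^{*}S_{\mathbf{W},{\bf w}}^{\dagger}\pi_{\mathcal{W},\mathcal{V}^{\perp}}f\in{\rm Im}(T_{\mathbf{W},{\bf w}}^{*})$ and $h\in{\rm Ker}(T_{\mathbf{W},{\bf w}})={\rm Im}(T_{\mathbf{W},{\bf w}}^{*})^{\perp}$, then apply the Pythagorean identity. The only difference is that you explicitly verify $T_{\mathbf{W},{\bf w}}x_0=\pi_{\mathcal{W},\mathcal{V}^{\perp}}f$ via $S_{\mathbf{W},{\bf w}}S_{\mathbf{W},{\bf w}}^{\dagger}=\pi_{\mathcal{W}}$, a step the paper leaves implicit.
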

\begin{proof}
Suppose that $(f_{i})_{i \in I} \in K_{\mathcal{W}}$ satisfies
$T_{\mathbf{W},{\bf w}}(f_{i})_{i \in
I}=\pi_{\mathcal{W},\mathcal{V}^{\perp}}f$. Then

\centerline{$(f_{i})_{i \in I}-T_{\mathbf{W},{\bf
w}}^{*}S_{\mathbf{W},{\bf
w}}^{\dagger}\pi_{\mathcal{W},\mathcal{V}^{\perp}}f \in {\rm
Ker}(T_{\mathbf{W},{\bf w}})={\rm Im}(T_{\mathbf{W},{\bf
w}}^{*})^{\perp}$.}

\noindent Since $T_{\mathbf{W},{\bf w}}^{*}S_{\mathbf{W},{\bf
w}}^{\dagger}\pi_{\mathcal{W},\mathcal{V}^{\perp}}f \in {\rm
Im}(T_{\mathbf{W},{\bf w}}^{*})$, the conclusion follows.
\end{proof}

\begin{rem}\label{R dual trivial}
Let $\mathcal{W}$ and $\mathcal{V}$ be two closed subspaces of
$\mathcal{H}$ such that
$\mathcal{H}=\mathcal{V}\oplus\mathcal{W}^{\perp}$. Let
$(\mathbf{W},{\bf w})$ be a fusion frame for $\mathcal{W}$ and
$({\bf V},{\bf v})$  be a fusion frame for $\mathcal{V}$.

Since $\mathcal{H}=\mathcal{V}\oplus\mathcal{W}^{\perp}$, the
operator $T_{\mathbf{W},{\bf w}}^{*}T_{\mathbf{V},{\bf v}} :
K_{\mathcal{V}} \rightarrow K_{\mathcal{W}}$ given by
$T_{\mathbf{W},{\bf w}}^{*}T_{\mathbf{V},{\bf v}}(g_{i})_{i \in
I}=\sum_{i \in I}v_{i}T_{\mathbf{W},{\bf w}}^{*}g_{i}=(w_{l}\sum_{i
\in I}v_{i}\pi_{W_{l}}g_{i})_{l \in I}$, satisfies ${\rm
Ker}(T_{\mathbf{W},{\bf w}}^{*}T_{\mathbf{V},{\bf v}})={\rm
Ker}(T_{\mathbf{V},{\bf v}})$ and ${\rm Im}(T_{\mathbf{W},{\bf
w}}^{*}T_{\mathbf{V},{\bf v}})={\rm Im}(T_{\mathbf{W},{\bf
w}}^{*})$.

If $f \in \mathcal{W}^{\perp}={\rm Ker}(T_{\mathbf{W},{\bf
w}}^{*})$, $T_{\mathbf{V},{\bf v}}(T_{\mathbf{W},{\bf
w}}^{*}T_{\mathbf{V},{\bf v}})^{\dagger}T_{\mathbf{W},{\bf
w}}^{*}f=0$. If $g \in \mathcal{V}={\rm Im}(T_{\mathbf{V},{\bf
v}})$, there exists $(g_{i})_{i \in I} \in K_{\mathcal{V}}$ such
that $g=T_{\mathbf{V},{\bf v}}(g_{i})_{i \in I}$. Then
$T_{\mathbf{V},{\bf v}}(T_{\mathbf{W},{\bf w}}^{*}T_{\mathbf{V},{\bf
v}})^{\dagger}T_{\mathbf{W},{\bf w}}^{*}g=T_{\mathbf{V},{\bf
v}}\pi_{{\rm Ker}(T_{\mathbf{V},{\bf v}})^{\perp}}(g_{i})_{i \in
I}=T_{\mathbf{V},{\bf v}}(g_{i})_{i \in I}=g$. Thus
$T_{\mathbf{V},{\bf v}}(T_{\mathbf{W},{\bf w}}^{*}T_{\mathbf{V},{\bf
v}})^{\dagger}T_{\mathbf{W},{\bf
w}}^{*}=\pi_{\mathcal{V},\mathcal{W}^{\perp}}$ and consequently,
$({\bf V},{\bf v})$ is a $(T_{\mathbf{W},{\bf
w}}^{*}T_{\mathbf{V},{\bf v}})^{\dagger}$-oblique dual fusion frame
of $(\mathbf{W},{\bf w})$ on $\mathcal{V}$.

This shows that given $(\mathbf{W},{\bf w})$ a fusion frame for
$\mathcal{W}$ and $({\bf V},{\bf v})$  a fusion frame for
$\mathcal{V}$ we can always do the analysis with one of them and the
synthesis with the other leading to a consistent reconstruction.
Note that this happens in the general framework of Definition~\ref{D
oblique fusion frame dual} where we do not impose any additional
condition on $Q$.

We note that the component preserving oblique duals associate with
$T_{\mathbf{V},{\bf v}}(T_{\mathbf{W},{\bf w}}^{*}T_{\mathbf{V},{\bf
v}})^{\dagger} \in \mathfrak{L}_{T^{*}_{\mathbf{W},{\bf
w}}}^{\mathcal{V},\mathcal{W}^{\perp}}$ obtained applying
Lemma~\ref{L Vi=ApiWj entonces V,v dual fusion frame} are the
canonical ones. To see this we will prove that $T_{\mathbf{V},{\bf
v}}(T_{\mathbf{W},{\bf w}}^{*}T_{\mathbf{V},{\bf
v}})^{\dagger}=\pi_{\mathcal{V},\mathcal{W}^{\perp}}S_{\mathbf{W},{\bf
w}}^{\dagger}T_{\mathbf{W},{\bf w}}$. We have on one hand

\centerline{$T_{\mathbf{V},{\bf v}}(T_{\mathbf{W},{\bf
w}}^{*}T_{\mathbf{V},{\bf v}})^{\dagger}T_{\mathbf{W},{\bf
w}}^{*}=\pi_{\mathcal{V},\mathcal{W}^{\perp}}S_{\mathbf{W},{\bf
w}}^{\dagger}T_{\mathbf{W},{\bf w}}T_{\mathbf{W},{\bf
w}}^{*}=\pi_{\mathcal{V},\mathcal{W}^{\perp}}$.}

\noindent Let now $(g_{i})_{i \in I} \in {\rm Im}(T_{\mathbf{W},{\bf
w}}^{*})^{\perp}={\rm Ker}(T_{\mathbf{W},{\bf w}})={\rm
Ker}((T_{\mathbf{W},{\bf w}}^{*}T_{\mathbf{V},{\bf v}})^{\dagger})$.
Then $T_{\mathbf{V},{\bf v}}(T_{\mathbf{W},{\bf
w}}^{*}T_{\mathbf{V},{\bf v}})^{\dagger}(g_{i})_{i \in
I}=\pi_{\mathcal{V},\mathcal{W}^{\perp}}S_{\mathbf{W},{\bf
w}}^{\dagger}T_{\mathbf{W},{\bf w}}(g_{i})_{i \in I}=0$.
\end{rem}

\subsection{Existence of non-canonical oblique dual fusion frames}\label{Existence of non-canonical}

A Bessel fusion sequence $(\mathbf{W},{\bf w})$ is a Riesz fusion
basis for $\mathcal{W}$ if and only if $T_{\mathbf{W},{\bf w}}$ is
injective, or equivalently, $S_{\mathbf{W},{\bf
w}}^{\dagger}T_{\mathbf{W},{\bf w}}$ is the unique element in
$\mathfrak{L}_{T_{\mathbf{W},{\bf w}}^{*}}^{\mathcal{W}}$. In this
case, by Proposition~\ref{P relacion inversa inversa oblicua},
$\pi_{\mathcal{V},\mathcal{W}^{\perp}}S_{\mathbf{W},{\bf
w}}^{\dagger}T_{\mathbf{W},{\bf w}}$ is the unique element in
$\mathfrak{L}_{T_{\mathbf{W},{\bf
w}}^{*}}^{\mathcal{V},\mathcal{W}^{\perp}}$. So, by Lemma~\ref{L V,v
dual fusion frame entonces Vi=ApiWj}, if $(\mathbf{W},{\bf w})$ is a
Riesz fusion basis for $\mathcal{W}$ the only component preserving
duals of $(\mathbf{W},{\bf w})$ are
$(\pi_{\mathcal{V},\mathcal{W}^{\perp}}S_{\mathbf{W},{\bf
w}}^{\dagger}\mathbf{W},{\bf v})$. It is easy to see that if
$Q_{\pi_{\mathcal{V},\mathcal{W}^{\perp}}S_{\mathbf{W},{\bf
w}}^{\dagger}T_{\mathbf{W},{\bf w}},{\bf v}}$ is well defined and
bounded for the weights $\mathbf{v}$, this component preserving
oblique dual coincides with the canonical one with weights
$\mathbf{v}$, i.e. the operator $Q$ for this dual is
$Q_{\pi_{\mathcal{V},\mathcal{W}^{\perp}}S_{\mathbf{W},{\bf
w}}^{\dagger}T_{\mathbf{W},{\bf w}},{\bf v}}$. We also have:
\begin{prop}\label{DualRFB} Let $\mathcal{W}$ and $\mathcal{V}$ be
two closed subspaces of $\mathcal{H}$ such that
$\mathcal{H}=\mathcal{V}\oplus\mathcal{W}^{\perp}$. Let
$(\mathbf{W},{\bf w})$ be a Riesz fusion basis for $\mathcal{W}$ and
${\bf v}$ a family of weights. The following assertions hold:
\begin{enumerate}

\item $(\pi_{\mathcal{V},\mathcal{W}^{\perp}}S_{\mathbf{W},{\bf
w}}^{\dagger}\mathbf{W},{\bf w})$ is a Riesz fusion basis for
$\mathcal{V}$.

\item If $({\bf V},{\bf v})$ is a Riesz fusion basis for $\mathcal{V}$, then
$T_{\mathbf{V},{\bf v}}^{*}T_{\mathbf{W},{\bf w}}$ is invertible.

  \item \label{MC} Let $({\bf V},{\bf v})$ be a block-diagonal oblique dual fusion frame of  $(\mathbf{W},{\bf w})$ on $\mathcal{V}$. Then, for each $i \in I$, $\pi_{\mathcal{V},\mathcal{W}^{\perp}}S_{\mathbf{W},{\bf
w}}^{\dagger}W_{i} \subseteq
  V_{i}$.
\item \label{CFRB} If $({\bf V},{\bf v})$ is a Riesz fusion basis for $\mathcal{V}$ which is a block-diagonal oblique dual fusion frame of $(\mathbf{W},{\bf w})$ on $\mathcal{V}$, then $V_i=\pi_{\mathcal{V},\mathcal{W}^{\perp}}S_{\mathbf{W},{\bf
w}}^{\dagger}W_{i}$ for $i \in I$

     \end{enumerate}
\end{prop}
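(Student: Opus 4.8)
The plan is to treat the four assertions separately, reducing (1) and (2) to the fact that suitable composites are topological isomorphisms, and (3) and (4) to the rigidity of Riesz fusion basis (direct-sum) decompositions. For (1), I would first note that, since $(\mathbf{W},{\bf w})$ is a fusion frame for $\mathcal{W}$, the operator $S_{\mathbf{W},{\bf w}}^{\dagger}$ restricts to a bijection of $\mathcal{W}$ onto itself (it inverts the bijective $S_{\mathbf{W},{\bf w}}|_{\mathcal{W}}$ and annihilates $\mathcal{W}^{\perp}$). Using Lemma~\ref{L composicion proyeccion oblicua con ortogonal} together with $\mathcal{W}\cap\mathcal{W}^{\perp}=\{0\}$, I would check that $\pi_{\mathcal{V},\mathcal{W}^{\perp}}$ restricts to a bijection of $\mathcal{W}$ onto $\mathcal{V}$ (injective because its kernel $\mathcal{W}^{\perp}$ meets $\mathcal{W}$ trivially, surjective because $\pi_{\mathcal{V},\mathcal{W}^{\perp}}\pi_{\mathcal{W}}v=v$ for $v\in\mathcal{V}$). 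Hence $\Phi:=\pi_{\mathcal{V},\mathcal{W}^{\perp}}S_{\mathbf{W},{\bf w}}^{\dagger}|_{\mathcal{W}}$ is a bounded, boundedly invertible isomorphism $\mathcal{W}\to\mathcal{V}$ with $\Phi(W_{i})=\widetilde{V}_{i}:=\pi_{\mathcal{V},\mathcal{W}^{\perp}}S_{\mathbf{W},{\bf w}}^{\dagger}W_{i}$. Since $(\mathbf{W},{\bf w})$ is a Riesz fusion basis, $\mathcal{W}$ is the (unconditional) direct sum of the $W_{i}$; transporting this decomposition through $\Phi$ (equivalently, checking that $T_{\widetilde{\mathbf{V}},{\bf w}}$ is again injective with image $\mathcal{V}$) shows that $(\pi_{\mathcal{V},\mathcal{W}^{\perp}}S_{\mathbf{W},{\bf w}}^{\dagger}\mathbf{W},{\bf w})$ is a Riesz fusion basis for $\mathcal{V}$.

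For (2), I would use that $T_{\mathbf{W},{\bf w}}$ maps $K_{\mathcal{W}}$ isomorphically onto $\mathcal{W}$ (injective as a Riesz fusion basis, onto $\mathcal{W}$ as a fusion frame), and that $T_{\mathbf{V},{\bf v}}^{*}$ restricts to an isomorphism $\mathcal{W}\to K_{\mathcal{V}}$. For the latter, $T_{\mathbf{V},{\bf v}}^{*}$ has kernel $\mathcal{V}^{\perp}$ and is onto $K_{\mathcal{V}}$ (being the adjoint of the bounded-below $T_{\mathbf{V},{\bf v}}$); its restriction to $\mathcal{W}$ is injective because $\mathcal{W}\cap\mathcal{V}^{\perp}=\{0\}$ (the equivalent splitting $\mathcal{H}=\mathcal{W}\oplus\mathcal{V}^{\perp}$), and surjective because any preimage $f\in\mathcal{H}$ of a given $y\in K_{\mathcal{V}}$ can be replaced by its $\mathcal{W}$-component along $\mathcal{V}^{\perp}$ without changing $T_{\mathbf{V},{\bf v}}^{*}f$. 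Then $T_{\mathbf{V},{\bf v}}^{*}T_{\mathbf{W},{\bf w}}$ is a composition of two isomorphisms, hence invertible.

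For (3), I would invoke the uniqueness of the oblique left inverse for a Riesz fusion basis recalled just before the statement, namely that $\mathfrak{L}_{T^{*}_{\mathbf{W},{\bf w}}}^{\mathcal{V},\mathcal{W}^{\perp}}$ consists of the single element $\pi_{\mathcal{V},\mathcal{W}^{\perp}}S_{\mathbf{W},{\bf w}}^{\dagger}T_{\mathbf{W},{\bf w}}$. The operator $A:=T_{\mathbf{V},{\bf v}}Q$ satisfies $AT^{*}_{\mathbf{W},{\bf w}}=\pi_{\mathcal{V},\mathcal{W}^{\perp}}$ by (\ref{E TvQTw*=I}) and has ${\rm Im}(A)=\mathcal{V}$ (it lies in ${\rm Im}(T_{\mathbf{V},{\bf v}})=\mathcal{V}$ and contains ${\rm Im}(\pi_{\mathcal{V},\mathcal{W}^{\perp}})=\mathcal{V}$), so $A$ equals this unique element. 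Applying the identity $A=\pi_{\mathcal{V},\mathcal{W}^{\perp}}S_{\mathbf{W},{\bf w}}^{\dagger}T_{\mathbf{W},{\bf w}}$ to $M_{i}K_{\mathcal{W}}$ and using $T_{\mathbf{W},{\bf w}}M_{i}K_{\mathcal{W}}=W_{i}$ gives $\pi_{\mathcal{V},\mathcal{W}^{\perp}}S_{\mathbf{W},{\bf w}}^{\dagger}W_{i}=T_{\mathbf{V},{\bf v}}QM_{i}K_{\mathcal{W}}$; block-diagonality of $Q$ forces $QM_{i}K_{\mathcal{W}}\subseteq M_{i}K_{\mathcal{V}}$, whence the right-hand side is contained in $T_{\mathbf{V},{\bf v}}M_{i}K_{\mathcal{V}}=V_{i}$, which is the desired inclusion.

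For (4), I would combine (1) and (3): part (3) yields $\widetilde{V}_{i}=\pi_{\mathcal{V},\mathcal{W}^{\perp}}S_{\mathbf{W},{\bf w}}^{\dagger}W_{i}\subseteq V_{i}$, while part (1) exhibits the $\widetilde{V}_{i}$ as a Riesz fusion basis of $\mathcal{V}$, and by hypothesis so are the $V_{i}$. Since both give direct-sum decompositions of the same $\mathcal{V}$ with the $i$-th summand of the first contained in the $i$-th summand of the second, uniqueness of the representation in $\bigoplus_{i}V_{i}$ forces equality summand by summand (any $v\in V_{i}$ expands as $\sum_{j}\widetilde{v}_{j}$ with $\widetilde{v}_{j}\in\widetilde{V}_{j}\subseteq V_{j}$, and uniqueness in $\bigoplus_{j}V_{j}$ gives $v=\widetilde{v}_{i}\in\widetilde{V}_{i}$), so $V_{i}=\pi_{\mathcal{V},\mathcal{W}^{\perp}}S_{\mathbf{W},{\bf w}}^{\dagger}W_{i}$. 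I expect the main obstacle to lie in the infinite-family bookkeeping of (1) and (4): one must transport and compare genuine Riesz fusion basis (unconditional direct-sum) decompositions, controlling the maps induced by $\Phi$ on the $\ell^{2}$-direct sums rather than arguing only at the level of spans, and likewise justify carefully the surjectivity of $T_{\mathbf{V},{\bf v}}^{*}|_{\mathcal{W}}$ used in (2).
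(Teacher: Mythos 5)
Your argument is correct, and for the most part it runs parallel to the paper's; the noteworthy divergences are in parts (3) and (4). For (3) the paper argues by a direct computation: it first establishes $T^{*}_{\mathbf{W},{\bf w}}S_{\mathbf{W},{\bf w}}^{\dagger}T_{\mathbf{W},{\bf w}}=I_{K_{\mathcal{W}}}$ (valid because $(\mathbf{W},{\bf w})$ is a Riesz fusion basis) and then rewrites $\pi_{\mathcal{V},\mathcal{W}^{\perp}}S_{\mathbf{W},{\bf w}}^{\dagger}f_{i}$ as $T_{{\bf V},{\bf v}}M_{i}Q(\chi_{i}(j)\tfrac{1}{w_{j}}f_{j})_{j\in I}\in V_{i}$; you instead identify $A=T_{\mathbf{V},{\bf v}}Q$ with the unique element $\pi_{\mathcal{V},\mathcal{W}^{\perp}}S_{\mathbf{W},{\bf w}}^{\dagger}T_{\mathbf{W},{\bf w}}$ of $\mathfrak{L}_{T^{*}_{\mathbf{W},{\bf w}}}^{\mathcal{V},\mathcal{W}^{\perp}}$ (the uniqueness being recorded in the paragraph preceding the proposition) and then apply block-diagonality at the level of ranges. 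The two are mathematically equivalent --- the paper's computation is exactly a pointwise verification of the identity you invoke --- but your version makes the structural reason transparent and is shorter. In (4) the paper assumes $\pi_{\mathcal{V},\mathcal{W}^{\perp}}S_{\mathbf{W},{\bf w}}^{\dagger}W_{i_{0}}$ is strictly contained in $V_{i_{0}}$, chooses a complement $U_{i_{0}}$ and derives a contradiction; your direct appeal to uniqueness of the expansion of $v\in V_{i}$ in $\bigoplus_{j}V_{j}$ reaches the same conclusion without having to produce a complement. Parts (1) and (2) match the paper's proofs up to packaging: the paper checks injectivity of $T_{\pi_{\mathcal{V},\mathcal{W}^{\perp}}S_{\mathbf{W},{\bf w}}^{\dagger}\mathbf{W},{\bf w}}$ directly by pulling a generic $(f_{i})_{i\in I}$ back through $T^{*}_{\mathbf{W},{\bf w}}$, and in (2) deduces bijectivity from injectivity of the operator together with injectivity of its adjoint, whereas you factor everything through isomorphisms $K_{\mathcal{W}}\to\mathcal{W}\to K_{\mathcal{V}}$; your explicit justification of the surjectivity of $T^{*}_{\mathbf{V},{\bf v}}|_{\mathcal{W}}$ via the splitting $\mathcal{H}=\mathcal{W}\oplus\mathcal{V}^{\perp}$ actually supplies the closed-range detail that the paper's ``consequently bijective'' step leaves implicit.
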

\begin{proof}

(1) Let $(f_{i})_{i \in I} \in K_{\mathcal{W}}$. Since
$(\mathbf{W},{\bf w})$ is a Riesz fusion basis for $\mathcal{W}$,
there exists $f \in \mathcal{H}$ such that $(f_{i})_{i \in
I}=T_{\mathbf{W},{\bf w}}^{*}f$. Then \begin{align*}
T_{\pi_{\mathcal{V},\mathcal{W}^{\perp}}S_{\mathbf{W},{\bf
w}}^{\dagger}\mathbf{W},{\bf
w}}(\pi_{\mathcal{V},\mathcal{W}^{\perp}}S_{\mathbf{W},{\bf
w}}^{\dagger}f_{i})_{i \in
I}&=\pi_{\mathcal{V},\mathcal{W}^{\perp}}S_{\mathbf{W},{\bf
w}}^{\dagger}T_{\mathbf{W},{\bf w}}(f_{i})_{i \in
I}\\&=\pi_{\mathcal{V},\mathcal{W}^{\perp}}S_{\mathbf{W},{\bf
w}}^{\dagger}T_{\mathbf{W},{\bf w}}T_{\mathbf{W},{\bf
w}}^{*}f=\pi_{\mathcal{V},\mathcal{W}^{\perp}}\pi_{\mathcal{W}}f=\pi_{\mathcal{V},\mathcal{W}^{\perp}}f.
\end{align*} Thus
$T_{\pi_{\mathcal{V},\mathcal{W}^{\perp}}S_{\mathbf{W},{\bf
w}}^{\dagger}\mathbf{W},{\bf
w}}(\pi_{\mathcal{V},\mathcal{W}^{\perp}}S_{\mathbf{W},{\bf
w}}^{\dagger}f_{i})_{i \in I}=0$ if and only if $f \in
\mathcal{W}^{\perp}={\rm Ker}(T_{\mathbf{W},{\bf w}}^{*})$, that is,
$(f_{i})_{i \in I}=0$. It follows that
$T_{\pi_{\mathcal{V},\mathcal{W}^{\perp}}S_{\mathbf{W},{\bf
w}}^{\dagger}\mathbf{W},{\bf w}}$ is injective, or equivalently,
$(\pi_{\mathcal{V},\mathcal{W}^{\perp}}S_{\mathbf{W},{\bf
w}}^{\dagger}\mathbf{W},{\bf w})$ is a Riesz fusion basis for
$\mathcal{V}$.

(2) Let $(f_{i})_{i \in I} \in K_{\mathcal{W}}$ such that
$T_{\mathbf{V},{\bf v}}^{*}T_{\mathbf{W},{\bf w}}(f_{i})_{i \in
I}=0$. So, $T_{\mathbf{W},{\bf w}}(f_{i})_{i \in I} \in {\rm
Ker}(T_{\mathbf{V},{\bf v}}^{*}) \cap {\rm Im}(T_{\mathbf{W},{\bf
w}})=\mathcal{V}^{\perp}\cap\mathcal{W}=\{0\}$. Since
$(\mathbf{W},{\bf w})$ is a Riesz fusion basis for $\mathcal{W}$,
this implies that $(f_{i})_{i \in I}=0$. Therefore,
$T_{\mathbf{V},{\bf v}}^{*}T_{\mathbf{W},{\bf w}}$ is injective. In
the same manner it results that $(T_{\mathbf{V},{\bf
v}}^{*}T_{\mathbf{W},{\bf w}})^{*}=T_{\mathbf{W},{\bf
w}}^{*}T_{\mathbf{V},{\bf v}}$ is injective. Consequently,
$T_{\mathbf{V},{\bf v}}^{*}T_{\mathbf{W},{\bf w}}$ is bijective.

(\ref{MC}) Let $(f_{i})_{i \in I} \in K_{\mathcal{W}}$ and $i \in I$
fixed. From $T_{{\bf V},{\bf v}}QT_{\mathbf{W},{\bf
w}}^{*}=\pi_{\mathcal{V},\mathcal{W}^{\perp}}$ and
$T_{\mathbf{W},{\bf w}}^{*}S_{\mathbf{W},{\bf
w}}^{\dagger}T_{\mathbf{W},{\bf w}}=I_{K_{\mathcal{W}}}$ (the last
equality holds since $(T_{\mathbf{W},{\bf
w}}^{*})_{|\mathcal{W}}:\mathcal{W}\rightarrow K_{\mathcal{W}}$ is
bijective and $S_{\mathbf{W},{\bf w}}^{\dagger}T_{\mathbf{W},{\bf
w}}T_{\mathbf{W},{\bf w}}^{*}=\pi_{\mathcal{W}}$), we obtain
\begin{align*}
\pi_{\mathcal{V},\mathcal{W}^{\perp}}S_{\mathbf{W},{\bf
w}}^{\dagger}f_{i}&=T_{{\bf V},{\bf v}}QT_{\mathbf{W},{\bf
w}}^{*}S_{\mathbf{W},{\bf w}}^{\dagger}T_{\mathbf{W},{\bf
w}}M_{i}(\chi_{i}(j)\frac{1}{w_{j}}f_{j})_{j \in I}\\&= T_{{\bf
V},{\bf v}}QM_{i}(\chi_{i}(j)\frac{1}{w_{j}}f_{j})_{j \in I}=T_{{\bf
V},{\bf v}}M_{i}Q(\chi_{i}(j)\frac{1}{w_{j}}f_{j})_{j \in I}\in
V_{i}.
\end{align*}
\noindent So
$\pi_{\mathcal{V},\mathcal{W}^{\perp}}S_{\mathbf{W},{\bf
w}}^{\dagger}W_{i} \subseteq V_{i}.$

(\ref{CFRB}) By (\ref{MC}),
$\pi_{\mathcal{V},\mathcal{W}^{\perp}}S_{\mathbf{W},{\bf
w}}^{\dagger}W_{i}\subseteq V_i$ for each $i \in I$. Suppose that
there exists $i_0\in I$ such that
$\pi_{\mathcal{V},\mathcal{W}^{\perp}}S_{\mathbf{W},{\bf
w}}^{\dagger}W_{i_0}\subset V_{ i_0}$. Set $\{0\} \neq U_{i_{0}}
\subset V_{ i_0}$ such that
$V_{i_{0}}=\pi_{\mathcal{V},\mathcal{W}^{\perp}}S_{\mathbf{W},{\bf
w}}^{\dagger}W_{i_0}\oplus U_{i_{0}}$ Take $0 \neq u_{i_{0}} \in U_{
i_0}$. By (1), $u_{i_{0}}=\sum_{i \in I}g_{i}$ where $g_{i} \in
\pi_{\mathcal{V},\mathcal{W}^{\perp}}S_{\mathbf{W},{\bf
w}}^{\dagger}W_{i}$ for each $i \in I$. Since
$\pi_{\mathcal{V},\mathcal{W}^{\perp}}S_{\mathbf{W},{\bf
w}}^{\dagger}W_{i}\subseteq V_i$ for each $i \in I$ and $({\bf
V},{\bf v})$ is a Riesz fusion basis for $\mathcal{V}$,
$u_{i_{0}}=g_{i_{0}}\in
\pi_{\mathcal{V},\mathcal{W}^{\perp}}S_{\mathbf{W},{\bf
w}}^{\dagger}W_{i_0} \cap U_{ i_0}=\{0\}$. This is absurd. Thus the
conclusion follows.\end{proof}

\begin{rem}
Let $\mathcal{W}$ and $\mathcal{V}$ be two closed subspaces of
$\mathcal{H}$. Let $(\mathbf{W},{\bf w})$ be a fusion Riesz basis
for $\mathcal{W}$ and $({\bf V},{\bf v})$ a fusion Riesz basis for
$\mathcal{V}$. If $T_{\mathbf{V},{\bf v}}^{*}T_{\mathbf{W},{\bf w}}$
is injective, then $\mathcal{W} \cap \mathcal{V}^{\perp}=\{0\}$. To
see this, consider $f \in \mathcal{W} \cap \mathcal{V}^{\perp}$.
Since $f \in \mathcal{W}={\rm Im}(T_{\mathbf{W},{\bf w}})$, there
exists $(f_{i})_{i \in I} \in K_{\mathcal{W}}$ such that
$f=T_{\mathbf{W},{\bf w}}(f_{i})_{i \in I}$. Since $f \in
\mathcal{V}^{\perp}={\rm Ker}(T_{\mathbf{V},{\bf v}}^{*})$,
$T_{\mathbf{V},{\bf v}}^{*}f=0$. Therefore, $T_{\mathbf{V},{\bf
v}}^{*}T_{\mathbf{W},{\bf w}}(f_{i})_{i \in I}=0$. Taking into
account that $T_{\mathbf{V},{\bf v}}^{*}T_{\mathbf{W},{\bf w}}$ is
injective, we deduce that $(f_{i})_{i \in I}=0$, and then $f=0$.
\end{rem}

To prove Proposition~\ref{P existencia no canonicos} about the
existence of non canonical oblique duals, we will need the following
Corollary which is a consequence of the next lemma that generalizes
Lemma 5.5.5 in \cite{Christensen (2016)}, and can be proved in a
similar way.

\begin{lem}\label{L Caracterizacion fusion frames}
Let $\mathcal{W}$ be a closed subspace of $\mathcal{H}$. A pair
$(\mathbf{W},{\bf w})$ is a fusion frame for $\mathcal{W}$ with
bounds $A,\,B$ if and only if the following conditions are
satisfied:
\begin{enumerate}
\item $(\mathbf{W},{\bf w})$ is complete in $\mathcal{W}$
\item The operator $T_{\mathbf{W},{\bf
w}}$ is well defined on $\mathcal{K}_{\mathcal{W}}$ and

\centerline{$A\|(f_i)_{i\in I}\|^2\leq \|T_{\mathbf{W},{\bf
w}}(f_i)_{i\in I}\|^2\leq B \|(f_i)_{i\in I}\|^2\,\,\,\forall
(f_i)_{i\in I}\in {\rm Ker}(T_{\mathbf{W},{\bf w}})^{\bot}$}
\end{enumerate}
\end{lem}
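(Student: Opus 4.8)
The plan is to recast both conditions as statements about the synthesis operator $T:=T_{\mathbf{W},{\bf w}}$ and its adjoint, and then to transfer a two–sided bound between $T$ and $T^{*}$. The key dictionary entry is that $T^{*}_{\mathbf{W},{\bf w}}f=(w_{i}\pi_{W_{i}}(f))_{i\in I}$, so that $\|T^{*}_{\mathbf{W},{\bf w}}f\|^{2}=\sum_{i\in I}w_{i}^{2}\|\pi_{W_{i}}(f)\|^{2}$; thus the fusion frame inequality (\ref{E cond ff}) is literally $A\|f\|^{2}\le\|T^{*}_{\mathbf{W},{\bf w}}f\|^{2}\le B\|f\|^{2}$ for all $f\in\mathcal{W}$. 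Since each $W_{i}\subseteq\mathcal{W}$ we always have $\mathcal{W}^{\perp}\subseteq{\rm Ker}(T^{*}_{\mathbf{W},{\bf w}})$, and completeness of $(\mathbf{W},{\bf w})$ in $\mathcal{W}$ is exactly $\overline{{\rm Im}(T_{\mathbf{W},{\bf w}})}=\mathcal{W}$, equivalently ${\rm Ker}(T^{*}_{\mathbf{W},{\bf w}})=\mathcal{W}^{\perp}$, i.e. $\mathcal{W}={\rm Ker}(T^{*}_{\mathbf{W},{\bf w}})^{\perp}$.

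Granting this dictionary, the lemma reduces to the operator-theoretic equivalence: writing $T=T_{\mathbf{W},{\bf w}}$, the bound
\[
A\|c\|^{2}\le\|Tc\|^{2}\le B\|c\|^{2}\quad\text{for all }c\in{\rm Ker}(T)^{\perp}
\]
is equivalent to
\[
A\|f\|^{2}\le\|T^{*}f\|^{2}\le B\|f\|^{2}\quad\text{for all }f\in\overline{{\rm Im}(T)}.
\]
For the upper bounds this is immediate: $\|Tc\|\le\sqrt{B}\,\|c\|$ on ${\rm Ker}(T)^{\perp}$ is equivalent to $\|T\|\le\sqrt{B}$ (as $T$ vanishes on ${\rm Ker}(T)$), which equals $\|T^{*}\|\le\sqrt{B}$, and since $T^{*}$ vanishes on $\mathcal{W}^{\perp}$ this is in turn equivalent to $\|T^{*}f\|\le\sqrt{B}\,\|f\|$ on $\mathcal{W}$.

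The lower bounds are the substantive step, and I would handle them through the positive operators $T^{*}T$ and $TT^{*}$. The left-hand lower bound says $\langle T^{*}T\,c,c\rangle\ge A\|c\|^{2}$ on ${\rm Ker}(T^{*}T)^{\perp}={\rm Ker}(T)^{\perp}$, i.e. the nonzero part of the spectrum of $T^{*}T$ lies in $[A,\infty)$; the right-hand lower bound says the same for $TT^{*}$ on ${\rm Ker}(T^{*})^{\perp}=\overline{{\rm Im}(T)}$. Since $T^{*}T$ and $TT^{*}$ have the same spectrum apart from $0$, the two assertions coincide. (Equivalently, one may invoke the polar decomposition $T=U|T|$: the partial isometry $U$ restricts to an isometric bijection ${\rm Ker}(T)^{\perp}\to\overline{{\rm Im}(T)}$ with $\||T|c\|=\|Tc\|$ and $\|T^{*}f\|=\||T|\,U^{*}f\|$, which transfers both bounds verbatim.) I expect this lower-bound transfer to be the only real obstacle, the care being to match ${\rm Ker}(T)^{\perp}$ with $\overline{{\rm Im}(T)}$ correctly and to use $A>0$ so that the kernel and the zero part of the spectrum cause no interference; note that closedness of the ranges need not be assumed, as it is a consequence of the positive lower bound.

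With the equivalence in hand both implications follow. For the forward direction, a fusion frame is in particular Bessel, so $T$ is well defined and bounded, and by the characterization noted after Definition~\ref{D fusion frame} one has ${\rm Im}(T)=\mathcal{W}$, giving condition (1); the two-sided bound on $T^{*}$ over $\mathcal{W}$ then yields the bound on $T$ over ${\rm Ker}(T)^{\perp}$, which is condition (2). For the converse, (2) provides well-definedness of $T$ together with the bound over ${\rm Ker}(T)^{\perp}$, while (1) gives $\overline{{\rm Im}(T)}=\mathcal{W}$; the equivalence converts the bound into $A\|f\|^{2}\le\|T^{*}f\|^{2}\le B\|f\|^{2}$ on $\mathcal{W}$, which is precisely (\ref{E cond ff}).
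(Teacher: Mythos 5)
Your proof is correct. Note first that the paper does not actually write out a proof of this lemma: it only remarks that the statement generalizes Lemma 5.5.5 of Christensen's book and ``can be proved in a similar way.'' Your argument has the same overall architecture as that intended proof --- translate completeness into $\overline{{\rm Im}(T_{\mathbf{W},{\bf w}})}=\mathcal{W}$, translate the fusion frame inequality into a two--sided bound for $T_{\mathbf{W},{\bf w}}^{*}$ on $\mathcal{W}$, and then transfer bounds between $T_{\mathbf{W},{\bf w}}$ on ${\rm Ker}(T_{\mathbf{W},{\bf w}})^{\perp}$ and $T_{\mathbf{W},{\bf w}}^{*}$ on $\overline{{\rm Im}(T_{\mathbf{W},{\bf w}})}$ --- but where Christensen's route does the lower--bound transfer by an elementary Cauchy--Schwarz estimate of the form $\|T^{*}f\|^{2}=\langle TT^{*}f,f\rangle\le\|TT^{*}f\|\,\|f\|$, you do it via the coincidence of the nonzero spectra of $T^{*}T$ and $TT^{*}$ (equivalently, the polar decomposition). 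That is a cleaner and more conceptual mechanism, at the price of invoking the spectral theorem where a two-line inequality would do; both are fully rigorous. The only point you should make explicit is the very first one in the converse direction: condition (2) asserts only that $T_{\mathbf{W},{\bf w}}$ is \emph{well defined} on $\mathcal{K}_{\mathcal{W}}$, and you immediately use its adjoint and treat ${\rm Ker}(T_{\mathbf{W},{\bf w}})$ as a closed subspace, so you need the standard fact that everywhere-defined convergence of $\sum_{i}w_{i}f_{i}$ forces boundedness of the synthesis operator (Banach--Steinhaus applied to the partial-sum operators, exactly as in the vector frame case). With that sentence added, the proof is complete.
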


\begin{cor}\label{C FF o incomplete}
Let $\mathcal{W}$ be a closed subspace of $\mathcal{H}$ and
$(\mathbf{W},{\bf w})$ be a fusion frame for $\mathcal{W}$. Let
$(\mathbf{\widetilde{W}},{\bf w})$ be a sequence such that
$\widetilde{W}_i\subseteq W_i$ for all $i\in I.$ Then
$(\mathbf{\widetilde{W}},{\bf w})$ is either a fusion frame for
$\mathcal{W}$ or incomplete in $\mathcal{W}$.
\end{cor}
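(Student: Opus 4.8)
The plan is to prove the dichotomy by showing that if $(\mathbf{\widetilde{W}},{\bf w})$ is complete in $\mathcal{W}$ then it is necessarily a fusion frame for $\mathcal{W}$; if it fails to be complete we are already in the second alternative and there is nothing to do. To establish the fusion frame property I would invoke Lemma~\ref{L Caracterizacion fusion frames}. Condition (1) there is exactly the completeness we are assuming, so everything reduces to verifying condition (2): that $T_{\widetilde{\mathbf{W}},{\bf w}}$ is well defined and satisfies a two-sided estimate $A\|(f_i)_{i\in I}\|^2\le\|T_{\widetilde{\mathbf{W}},{\bf w}}(f_i)_{i\in I}\|^2\le B\|(f_i)_{i\in I}\|^2$ on ${\rm Ker}(T_{\widetilde{\mathbf{W}},{\bf w}})^{\bot}$.

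The upper bound is the routine half. Since $\widetilde{W}_i\subseteq W_i$ we have $\pi_{\widetilde{W}_i}=\pi_{\widetilde{W}_i}\pi_{W_i}$, hence $\|\pi_{\widetilde{W}_i}f\|\le\|\pi_{W_i}f\|$ for every $f$ and every $i$; summing against the weights shows that $(\mathbf{\widetilde{W}},{\bf w})$ inherits the Bessel bound $B$ of $(\mathbf{W},{\bf w})$, so $T_{\widetilde{\mathbf{W}},{\bf w}}$ is well defined and bounded. Moreover, because $\widetilde{W}_i\subseteq W_i$ for all $i$, the space $K_{\widetilde{\mathcal{W}}}=\oplus_{i\in I}\widetilde{W}_i$ is a closed subspace of $K_{\mathcal{W}}$, and $T_{\widetilde{\mathbf{W}},{\bf w}}$ is precisely the restriction $T_{\mathbf{W},{\bf w}}|_{K_{\widetilde{\mathcal{W}}}}$, since both send $(f_i)_{i\in I}$ to $\sum_{i\in I}w_if_i$. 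This identification is what lets me transfer information from the full fusion frame to the sub-system.

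The core of the argument, and the step I expect to be the main obstacle, is the lower bound on ${\rm Ker}(T_{\widetilde{\mathbf{W}},{\bf w}})^{\bot}$. Here I would use that $(\mathbf{W},{\bf w})$ is a fusion frame: its synthesis operator has closed range equal to $\mathcal{W}$ and is bounded below on ${\rm Ker}(T_{\mathbf{W},{\bf w}})^{\bot}$. Writing $N={\rm Ker}(T_{\mathbf{W},{\bf w}})$, the map $T_{\mathbf{W},{\bf w}}|_{N^{\bot}}$ is a topological isomorphism onto $\mathcal{W}$, so that ${\rm Im}(T_{\widetilde{\mathbf{W}},{\bf w}})$ equals the image under this isomorphism of $\pi_{N^{\bot}}(K_{\widetilde{\mathcal{W}}})$, and completeness says this image is dense in $\mathcal{W}$. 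It therefore remains to show that ${\rm Im}(T_{\widetilde{\mathbf{W}},{\bf w}})$ is actually closed, equivalently that $N+K_{\widetilde{\mathcal{W}}}$ is closed, i.e. that $N$ and $K_{\widetilde{\mathcal{W}}}$ meet at a positive angle; for then a dense range that is also closed forces $T_{\widetilde{\mathbf{W}},{\bf w}}$ to be onto $\mathcal{W}$, and the open mapping theorem yields the desired lower bound on ${\rm Ker}(T_{\widetilde{\mathbf{W}},{\bf w}})^{\bot}$, completing condition (2) of Lemma~\ref{L Caracterizacion fusion frames}. Establishing this closedness/positive-angle property from the hypotheses, in the spirit of the scalar argument behind Christensen's Lemma 5.5.5, is the delicate point: it is exactly where the nested structure $\widetilde{W}_i\subseteq W_i$ together with the lower bound of $(\mathbf{W},{\bf w})$ must be brought to bear, and one must take care that the weighted geometry of the kept subspaces does not let ${\rm Im}(T_{\widetilde{\mathbf{W}},{\bf w}})$ fail to be closed.
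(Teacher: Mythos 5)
Your reduction is set up correctly and in fact coincides with the paper's strategy: assume completeness, obtain the Bessel bound from $\pi_{\widetilde{W}_i}=\pi_{\widetilde{W}_i}\pi_{W_i}$, identify $T_{\widetilde{\mathbf{W}},{\bf w}}$ with the restriction of $T_{\mathbf{W},{\bf w}}$ to $\mathcal{K}_{\widetilde{\mathcal{W}}}\subseteq \mathcal{K}_{\mathcal{W}}$, and observe that everything comes down to proving that ${\rm Im}(T_{\widetilde{\mathbf{W}},{\bf w}})$ is closed, after which density gives surjectivity onto $\mathcal{W}$ and the open mapping theorem yields the lower bound required by Lemma~\ref{L Caracterizacion fusion frames}. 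The problem is that you stop exactly there. The closedness of the range \emph{is} the content of the corollary; declaring it ``the delicate point'' and speculating about positive angles between ${\rm Ker}(T_{\mathbf{W},{\bf w}})$ and $\mathcal{K}_{\widetilde{\mathcal{W}}}$ is not a proof of it. As written, the proposal is a correct reduction followed by an unproved claim, so it does not establish the dichotomy.

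The missing step is precisely where the paper brings in the lower fusion frame bound of the \emph{original} system, and no separate angle condition is invoked. Since $T_{\widetilde{\mathbf{W}},{\bf w}}(f_i)_{i\in I}=T_{\mathbf{W},{\bf w}}(f_i)_{i\in I}$ for every $(f_i)_{i\in I}\in\mathcal{K}_{\widetilde{\mathcal{W}}}$, Lemma~\ref{L Caracterizacion fusion frames} applied to the fusion frame $(\mathbf{W},{\bf w})$ gives
$\|T_{\widetilde{\mathbf{W}},{\bf w}}(f_i)_{i\in I}\|^{2}\geq A\,\|\pi_{{\rm Ker}(T_{\mathbf{W},{\bf w}})^{\perp}}(f_i)_{i\in I}\|^{2}$
for all such sequences. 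The paper then takes $g$ in the closure of the range, a sequence $(f_i)^{n}_{i\in I}$ in $\mathcal{K}_{\widetilde{\mathcal{W}}}$ with $T_{\widetilde{\mathbf{W}},{\bf w}}(f_i)^{n}_{i\in I}\to g$, applies the inequality to differences to conclude that the relevant sequence is Cauchy in $\mathcal{K}_{\widetilde{\mathcal{W}}}$, and passes to the limit by continuity to get $g\in{\rm Im}(T_{\widetilde{\mathbf{W}},{\bf w}})$; this is where the nesting $\widetilde{W}_i\subseteq W_i$ (which makes $\mathcal{K}_{\widetilde{\mathcal{W}}}$ a closed subspace of $\mathcal{K}_{\mathcal{W}}$ on which the estimate for the large system is available) actually does its work. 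You need to carry out an argument of this kind --- or an equivalent one --- rather than defer it; until then there is a genuine gap at the heart of the proof.
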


\begin{proof}
Assume that $(\mathbf{\widetilde{W}},{\bf w})$ is complete in
$\mathcal{W}$. Since
$\pi_{\widetilde{W}_{i}}=\pi_{\widetilde{W}_{i}}\pi_{W_{i}}$ It is
clear that $(\mathbf{\widetilde{W}},{\bf w})$ is a Bessel sequence,
so the operator $T_{\widetilde{\mathbf{W}},{\bf w}}$ is well defined
on $\mathcal{K}_{\widetilde{\mathcal{W}}}$.

Considering everything inside $\mathcal{K}_{{\mathcal{\widetilde
W}}}$ we can decompose

\centerline{${\rm Ker}(T_{{\mathbf{\widetilde{W}},{\bf
w}}})^{\bot}={\rm Ker}(T_{\mathbf{W},{\bf w}})^{\bot}\oplus ({\rm
Ker}(T_{\mathbf{W},{\bf w}})^{\bot})^{\bot}={\rm
Ker}(T_{\mathbf{W},{\bf w}})^{\bot}\oplus {\rm
Ker}(T_{\mathbf{W},{\bf w}}).$}

\noindent Hence, by Lemma~\ref{L Caracterizacion fusion frames},
\begin{equation}\label{desinf}
\|T_{\mathbf{\widetilde{W}},{\bf w}}(f_i)_{i\in I}\|^2\geq
A\|\pi_{{\rm Ker}(T_{\mathbf{W},{\bf w}})^{\bot}}(f_i)_{i\in
I}\|^2\,\,\,\forall (f_i)_{i\in I}\in {\rm
Ker}(T_{{\mathbf{\widetilde{W}},{\bf w}}})^{\bot}.
\end{equation}
Since $\text{span}\bigcup_{i\in I} \widetilde{W}_i \subseteq {\rm
Im}(T_{\widetilde{\mathbf{W}},{\bf w}}) \subseteq
\overline{\text{span}}\bigcup_{i\in I} \widetilde{W}_i =
\mathcal{W}$, we only have to prove that ${\rm
Im}(T_{{\mathbf{\widetilde W}},{\bf w}})$ is closed. Let $g\in
\overline{{\rm Im}(T_{{\mathbf{\widetilde W}},{\bf w}})}$. Then
there exists a sequence $(f_i)_{i\in I}^n \in {\rm
Ker}(T_{\mathbf{W},{\bf w}})^{\bot}$ such that
$T_{\mathbf{\widetilde{W}},{\bf w}}(f_i)_{i\in I}^n$ converges to
$g$. By (\ref{desinf}) $(f_i)_{i\in I}^n$ is a Cauchy sequence, so
it converges to some $(f_i)_{i\in I}$ in
$\mathcal{K}_{{\mathcal{\widetilde W}}}$ which satisfies, by
continuity, that $T_{\mathbf{\widetilde{W}},{\bf w}}(f_i)_{i\in
I}=g$.
\end{proof}

The next proposition shows that if $(\mathbf{W},{\bf w})$ is an
overcomplete fusion frame with non trivial subspaces there always
exist component preserving oblique dual fusion frames which are not
the canonical ones. For $\mathcal{V}=\mathcal{W}$ this result is a
generalization of \cite[Proposition 3.9]{Heineken-Morillas (2014)}
to the infinite dimensional case.

\begin{prop}\label{P existencia no canonicos}
Let $(\mathbf{W},{\bf w})$ be a fusion frame for a closed subspace
$\mathcal{W}\subseteq \mathcal{H}$ and let $\mathcal{V}$ be a closed
subspace such that
$\mathcal{H}=\mathcal{V}\oplus\mathcal{W}^{\perp}$. Let
$(\mathbf{W},{\bf w})$ be an overcomplete fusion frame for
$\mathcal{W}$ such that $W_i\neq \{0\}$  for every $i\in I$. Then
there exist component preserving oblique dual fusion frames
$(\mathbf{V},\mathbf{w})$ of $(\mathbf{W},{\bf w})$ different from
$(\pi_{\mathcal{V},\mathcal{W}^{\perp}}S_{\mathbf{W},{\bf
w}}^{\dagger}\mathbf{W},{\bf  w})$.
\end{prop}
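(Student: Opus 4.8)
The plan is to build the non-canonical dual as a rank-one perturbation of the operator generating the canonical one. By Remark~\ref{R conjunto inversas oblicuas de TWw*}, every element of $\mathfrak{L}_{T^{*}_{\mathbf{W},{\bf w}}}^{\mathcal{V},\mathcal{W}^{\perp}}$ is of the form $A=A_{0}+B$, where $A_{0}=\pi_{\mathcal{V},\mathcal{W}^{\perp}}S_{\mathbf{W},{\bf w}}^{\dagger}T_{\mathbf{W},{\bf w}}$ generates the canonical oblique dual and $B\in L(K_{\mathcal{W}},\mathcal{V})$ satisfies $BT^{*}_{\mathbf{W},{\bf w}}=0$; any such $A$ automatically has ${\rm Im}(A)=\mathcal{V}$, since $(A_{0}+B)T^{*}_{\mathbf{W},{\bf w}}=\pi_{\mathcal{V},\mathcal{W}^{\perp}}$. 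By Lemma~\ref{L Vi=ApiWj entonces V,v dual fusion frame}, once I produce such an $A$ for which $(\mathbf{V},{\bf w})$ with $V_{i}=AM_{i}K_{\mathcal{W}}$ is a Bessel fusion sequence and $Q_{A,{\bf w}}$ is bounded, the family is automatically a component preserving oblique dual of $(\mathbf{W},{\bf w})$ on $\mathcal{V}$. Thus the task reduces to choosing $B\neq 0$ so that at least one $V_{i_{0}}$ differs from the canonical subspace $\pi_{\mathcal{V},\mathcal{W}^{\perp}}S_{\mathbf{W},{\bf w}}^{\dagger}W_{i_{0}}$, while preserving the Bessel and boundedness properties.

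The constraint $BT^{*}_{\mathbf{W},{\bf w}}=0$ forces $B$ to be supported on ${\rm Ker}(T_{\mathbf{W},{\bf w}})$, which is nonzero exactly because $(\mathbf{W},{\bf w})$ is overcomplete. I would take $B$ of rank one, $B(f_{i})_{i\in I}=\langle (f_{i})_{i\in I},k\rangle\,u$, with $u\in\mathcal{V}$ and $0\neq k=(k_{i})_{i\in I}\in{\rm Ker}(T_{\mathbf{W},{\bf w}})$; then $BT^{*}_{\mathbf{W},{\bf w}}f=\langle f,T_{\mathbf{W},{\bf w}}k\rangle u=0$. To keep $Q_{A,{\bf w}}$ bounded I need $\sup_{i}\|k_{i}\|/w_{i}<\infty$, and such a $k$ can be found: pick a finitely supported $g\in K_{\mathcal{W}}$ with $\pi_{{\rm Ker}(T_{\mathbf{W},{\bf w}})}g\neq 0$ and set $k=\pi_{{\rm Ker}(T_{\mathbf{W},{\bf w}})}g$. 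Since $T_{\mathbf{W},{\bf w}}$ has closed range, ${\rm Ker}(T_{\mathbf{W},{\bf w}})^{\perp}={\rm Im}(T^{*}_{\mathbf{W},{\bf w}})$, so $g-k=T^{*}_{\mathbf{W},{\bf w}}h$ for some $h$, giving $\|(g-k)_{i}\|/w_{i}=\|\pi_{W_{i}}h\|\leq\|h\|$; together with the finite support of $g$ this bounds $\sup_{i}\|k_{i}\|/w_{i}$. A direct computation then gives $V_{i}=\{w_{i}\pi_{\mathcal{V},\mathcal{W}^{\perp}}S_{\mathbf{W},{\bf w}}^{\dagger}f+\langle f,k_{i}\rangle u:f\in W_{i}\}$, a rank-one perturbation of $\pi_{\mathcal{V},\mathcal{W}^{\perp}}S_{\mathbf{W},{\bf w}}^{\dagger}W_{i}$.

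To force a genuine change, fix $i_{0}$ with $k_{i_{0}}\neq 0$. If some such index has $W_{i_{0}}\neq\mathcal{W}$, I would choose $u\in\mathcal{V}\setminus\pi_{\mathcal{V},\mathcal{W}^{\perp}}S_{\mathbf{W},{\bf w}}^{\dagger}W_{i_{0}}$; then, taking $f_{*}\in W_{i_{0}}$ with $\langle f_{*},k_{i_{0}}\rangle=1$, the vector $w_{i_{0}}\pi_{\mathcal{V},\mathcal{W}^{\perp}}S_{\mathbf{W},{\bf w}}^{\dagger}f_{*}+u\in V_{i_{0}}$ lies outside the canonical subspace, so $V_{i_{0}}$ differs. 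In the opposite case, where $W_{i}=\mathcal{W}$ for every $i$ with $k_{i}\neq 0$, the canonical subspaces are all $\mathcal{V}$; here I would instead choose $u$ so that the map $f\mapsto w_{i_{0}}\pi_{\mathcal{V},\mathcal{W}^{\perp}}S_{\mathbf{W},{\bf w}}^{\dagger}f+\langle f,k_{i_{0}}\rangle u$ fails to be surjective (a single scalar condition on $u$, solvable since $k_{i_{0}}\neq 0$ and $\pi_{\mathcal{V},\mathcal{W}^{\perp}}S_{\mathbf{W},{\bf w}}^{\dagger}|_{\mathcal{W}}$ is an isomorphism onto $\mathcal{V}$). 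The perturbed map then has proper, closed range, so $V_{i_{0}}\subsetneq\mathcal{V}$ and again differs from the canonical subspace.

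It remains to check that $(\mathbf{V},{\bf w})$ is a genuine fusion frame. Each $V_{i}$ is closed because it is the image under the isomorphism $\pi_{\mathcal{V},\mathcal{W}^{\perp}}S_{\mathbf{W},{\bf w}}^{\dagger}|_{\mathcal{W}}$ of $(I+\text{rank one})W_{i}$, and a rank-one perturbation of the identity always has closed range. Completeness is free since ${\rm Im}(A)=\mathcal{V}$. For the Bessel bound I would use that $\sup_{i}w_{i}<\infty$ (forced by $W_{i}\neq\{0\}$: any $0\neq f\in W_{i}$ gives $w_{i}^{2}\|f\|^{2}\leq\beta\|f\|^{2}$ from the upper fusion bound $\beta$), which, combined with $\|A\|<\infty$, yields a uniform upper bound for the local frames $\{Ae_{i,l}\}$ and hence, via the Bessel part of Theorem~\ref{T wF marco sii WwF fusion frame system}, for $(\mathbf{V},{\bf w})$; Corollary~\ref{C FF o incomplete} is the clean criterion certifying that the perturbed family is a fusion frame rather than merely complete. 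Lemma~\ref{L Vi=ApiWj entonces V,v dual fusion frame} then finishes. I expect the main obstacle to be precisely this infinite-dimensional bookkeeping—producing a kernel vector with $\sup_{i}\|k_{i}\|/w_{i}<\infty$ and verifying closedness and the Bessel bound with possibly decaying weights—rather than the underlying algebra, which is the same rank-one perturbation used in the finite-dimensional case.
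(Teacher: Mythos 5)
Your construction is a genuinely different route from the paper's. You perturb the canonical oblique left inverse $A_{0}=\pi_{\mathcal{V},\mathcal{W}^{\perp}}S_{\mathbf{W},{\bf w}}^{\dagger}T_{\mathbf{W},{\bf w}}$ by a rank-one operator annihilating ${\rm Im}(T^{*}_{\mathbf{W},{\bf w}})$, whereas the paper shrinks a single subspace, replacing $W_{i_0}$ by $\widetilde{W}_{i_0}=W_{i_0}\cap(W_{i_0}\cap\overline{\text{span}}\{W_i:i\neq i_0\})^{\perp}$, checks via Corollary~\ref{C FF o incomplete} that $(\widetilde{\mathbf{W}},{\bf w})$ is still a fusion frame, and sets $V_i=\pi_{\mathcal{V},\mathcal{W}^{\perp}}S_{\widetilde{\mathbf{W}},{\bf w}}^{\dagger}\widetilde{W}_i$. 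The paper's choice makes all $V_i$ images of subspaces of the $W_i$ under one fixed bounded invertible operator $\mathcal{W}\to\mathcal{V}$, so the Bessel property and the boundedness of $\widetilde{Q}$ are inherited from $(\mathbf{W},{\bf w})$ essentially for free, and the effort goes into showing $V_{i_0}\neq\pi_{\mathcal{V},\mathcal{W}^{\perp}}S_{\mathbf{W},{\bf w}}^{\dagger}W_{i_0}$. Your kernel-vector construction with $\sup_i\|k_i\|/w_i<\infty$, the boundedness of $Q_{A,{\bf w}}$, and the closedness of the $V_i$ are all fine.

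The genuine gap is the verification that $(\mathbf{V},{\bf w})$ is a Bessel fusion sequence, which is a hypothesis of Lemma~\ref{L Vi=ApiWj entonces V,v dual fusion frame} and not automatic. The direction of Theorem~\ref{T wF marco sii WwF fusion frame system} you invoke (Bessel-ness of the glued frame implies Bessel-ness of the fusion sequence) needs a uniform \emph{lower} local frame bound, not only a uniform upper one; for $V_i=\{w_i\pi_{\mathcal{V},\mathcal{W}^{\perp}}S_{\mathbf{W},{\bf w}}^{\dagger}f+\langle f,k_i\rangle u:\ f\in W_i\}$ that lower bound is governed by how far the maps $R_i=w_i\pi_{\mathcal{V},\mathcal{W}^{\perp}}S_{\mathbf{W},{\bf w}}^{\dagger}|_{W_i}+\langle\cdot,k_i\rangle u$ are bounded below, and this can degenerate -- indeed in your second case you deliberately make $R_{i_0}$ non-injective. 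The crude fallback $\|\pi_{V_i}f\|\leq\|f\|$ does not rescue the estimate either, because $\sum_iw_i^2$ need not converge. The argument can be repaired by rescaling $u$ so that $\|u\|\sup_i\|k_i\|/w_i$ is small compared with the lower bound of $\pi_{\mathcal{V},\mathcal{W}^{\perp}}S_{\mathbf{W},{\bf w}}^{\dagger}|_{\mathcal{W}}$ (then each $R_i$ is bounded below by a multiple of $w_i$ and the Bessel bound follows using $\sum_i\|k_i\|^2=\|k\|^2<\infty$), but that smallness requirement collides with your second case, where forcing non-surjectivity at $i_0$ pins down $\langle (\pi_{\mathcal{V},\mathcal{W}^{\perp}}S_{\mathbf{W},{\bf w}}^{\dagger})^{-1}u,k_{i_0}\rangle=-w_{i_0}$ and hence a definite size of $u$; that index must then be treated separately. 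Finally, Corollary~\ref{C FF o incomplete} is not applicable to your family: it concerns passing to subspaces $\widetilde{W}_i\subseteq W_i$ of a given fusion frame, and your $V_i$ are not contained in the canonical subspaces; once Bessel-ness and the identity $T_{\mathbf{V},{\bf w}}Q_{A,{\bf w}}T^{*}_{\mathbf{W},{\bf w}}=\pi_{\mathcal{V},\mathcal{W}^{\perp}}$ are in place, Lemma~\ref{L equivalencias} already upgrades $(\mathbf{V},{\bf w})$ to a fusion frame, so no such corollary is needed.
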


\begin{proof} Since  $(\mathbf{W},{\bf w})$ is not a Riesz fusion basis, there exists $i_0\in
I$ such that $W_{i_0}\cap \overline{\text{span}}\{W_i:i\neq
i_0\}\neq \{0\}.$ Let us first prove that
$(\mathbf{\widetilde{W}},{\bf w})$ given by $\widetilde{W}_i=W_i$
for $i\neq i_0$ and
$\widetilde{W}_{i_0}=W_{i_0}\cap(W_{i_0}\cap\overline{\text{span}}\{W_i:i\neq
i_0\})^\bot$ is a fusion frame for $\mathcal{W}$.

Let us first see that $(\mathbf{\widetilde{W}},{\bf w})$ given by
$\widetilde{W}_i=W_i$ for $i\neq i_0$ and
$\widetilde{W}_{i_0}=W_{i_0}\cap(\overline{\text{span}}\{W_i:i\neq
i_0\}\cap W_{i_0})^\bot$ is a fusion frame for $\mathcal{W}$.

Let $f\in \mathcal{W}$. Then $f=\sum_{i\in I}f_i$, with $f_i\in W_i$
for all $i\in I.$ So, $f=\sum_{i\neq i_0}f_i+f_{i_0}=\sum_{i\neq
i_0}f_i+\pi_{\overline{\text{span}}\{W_i:i\neq i_0\}\cap
W_{i_0}}(f_{i_0})+\pi_{(\overline{\text{span}}\{W_i:i\neq i_0\}\cap
W_{i_0})^\bot}(f_{i_0})$.

But then $\pi_{(\overline{\text{span}}\{W_i:i\neq i_0\}\cap
W_{i_0})^\bot}(f_{i_0})\in W_{i_0}$, hence
$\pi_{(\overline{\text{span}}\{W_i:i\neq i_0\}\cap
W_{i_0})^\bot}(f_{i_0})\in \widetilde{W}_{i_0}$. So $f\in
\overline{\text{span}}(\bigcup_{i=1}^m\widetilde{W}_i)$. It follows
that $(\mathbf{\widetilde{W}},{\bf w})$ is complete. By
Corollary~\ref{C FF o incomplete}, $(\mathbf{\widetilde{W}},{\bf
w})$ is a fusion frame for $\mathcal{W}$.

Now define
$V_i=\pi_{\mathcal{V},\mathcal{W}^{\perp}}S_{\mathbf{\widetilde{W}},{\bf
w}}^{\dagger}(\widetilde{W}_{i})$ for $i\in I.$ Consider the
component preserving $\widetilde{Q}\in L(K_{\mathcal{W}},
K_{\mathcal{V}}),$ given by $\widetilde{Q}(f_i)_{i\in
I}=(\pi_{\mathcal{V},\mathcal{W}^{\perp}}S_{\mathbf{\widetilde{W}},{\bf
w}}^{\dagger}\pi_{\widetilde{W}_i}f_i)_{i\in I}.$

Let $f\in\mathcal{H}.$ Since
$\pi_{\widetilde{W}_{i_0}}\pi_{W_{i_0}}f=\pi_{\widetilde{W}_{i_0}}f,$
we obtain $T_{\mathbf{V},{\bf w}}\widetilde{Q}T_{\mathbf{W},{\bf
w}}^*f=\sum_{i \in I}w_i^2
\pi_{\mathcal{V},\mathcal{W}^{\perp}}S_{\mathbf{\widetilde{W}},{\bf
w}}^{\dagger}\pi_{\widetilde{W}_i}(f)=\pi_{\mathcal{V},\mathcal{W}^{\perp}}(f).$
This shows that $(\mathbf{V},\mathbf{w})$ is a component preserving
$\widetilde{Q}$-oblique dual fusion frame of
$(\mathbf{W},\mathbf{w})$ on $\mathcal{V}$.

Note that $\widetilde{W}_{i_0}\subseteq W_{i_0}.$ Assume that
$\widetilde{W}_{i_0}=W_{i_0}.$ Then $W_{i_0}\subseteq
(\overline{\text{span}}\{W_i:i\neq i_0\} \cap W_{i_0})^\bot$ which
is a contradiction since $W_{i_0}\cap
\overline{\text{span}}\{W_i:i\neq i_0\}\neq \{0\}.$  To see that $
\pi_{\mathcal{V},\mathcal{W}^{\perp}}S_{\mathbf{\widetilde{W}},{\bf
w}}^{\dagger}({\widetilde{W}_{i_0}})\neq
\pi_{\mathcal{V},\mathcal{W}^{\perp}}S_{\mathbf{W},{\bf
w}}^{\dagger}(W_{i_0})$, take $f\in S_{\mathbf{W},{\bf
w}}^{\dagger}(W_{i_0}\cap {\widetilde{W}_{i_0}}^\bot),\,\,f\neq 0$.
Assume, by contradiction, that
$\pi_{\mathcal{V},\mathcal{W}^{\perp}}f\in
\pi_{\mathcal{V},\mathcal{W}^{\perp}}S_{\mathbf{\widetilde{W}},{\bf
w}}^{\dagger}({\widetilde{W}_{i_0}})$. We have

\centerline{$\pi_{\mathcal{V},\mathcal{W}^{\perp}}(f)=\pi_{\mathcal{V},\mathcal{W}^{\perp}}S_{\mathbf{\widetilde{W}},{\bf
w}}^{\dagger}S_{\mathbf{\widetilde{W}},{\bf w}}(f)=
\pi_{\mathcal{V},\mathcal{W}^{\perp}}S_{\mathbf{\widetilde{W}},{\bf
w}}^{\dagger}(\sum_{i\in I,i\neq i_0} w_i^2\pi_{W_{i}}(f)+w_{i_0}^2
\pi_{{\widetilde {W}}_{i_0}}(f)).$}

\noindent Then
$\pi_{\mathcal{V},\mathcal{W}^{\perp}}S_{\mathbf{\widetilde{W}},{\bf
w}}^{\dagger}(\sum_{i\in I,i\neq i_0} w_i^2\pi_{W_{i}}(f))\in
\pi_{\mathcal{V},\mathcal{W}^{\perp}}S_{\mathbf{\widetilde{W}},{\bf
w}}^{\dagger}({\widetilde{W}_{i_0}})$, i.e. there exists $g\in
{\widetilde{W}_{i_0}}$ such that
$\pi_{\mathcal{V},\mathcal{W}^{\perp}}S_{\mathbf{\widetilde{W}},{\bf
w}}^{\dagger}(\sum_{i\in I,i\neq i_0}
w_i^2\pi_{W_{i}}(f))=\pi_{\mathcal{V},\mathcal{W}^{\perp}}S_{\mathbf{\widetilde{W}},{\bf
w}}^{\dagger}(g).$ Hence $S_{\mathbf{\widetilde{W}},{\bf
w}}^{\dagger}(\sum_{i\in I,i\neq i_0} w_i^2\pi_{W_{i}}(f)-g)\in
\mathcal{W}^{\perp}$. But then $S_{\mathbf{\widetilde{W}},{\bf
w}}^{\dagger}(\sum_{i\in I,i\neq i_0} w_i^2\pi_{W_{i}}(f)-g)=0$ i.e.
$\sum_{i\in I,i\neq i_0} w_i^2\pi_{W_{i}}(f)-g\in {\rm
Ker}(S_{\mathbf{\widetilde{W}},{\bf w}}^{\dagger}) =
\mathcal{W}^{\perp}$ and so $\sum_{i\in I,i\neq i_0}
w_i^2\pi_{W_{i}}(f)=g$. Then $\sum_{i\in I,i\neq i_0}
w_i^2\pi_{W_{i}}(f)\in \widetilde{W}_{i_0}$. So  $\sum_{i\in I,i\neq
i_0} w_i^2\pi_{W_{i}}(f)\in
(W_{i_0}\cap\overline{\text{span}}\{W_i:i\neq i_0\})^\bot$ and
$\sum_{i\in I,i\neq i_0} w_i^2\pi_{W_{i}}(f)\in W_{i_0}\cap
\overline{\text{span}}\{W_i:i\neq i_0\}$. It follows that
$\sum_{i\in I,i\neq i_0} w_i^2\pi_{W_{i}}(f)=0$. So
$f=S_{\mathbf{\widetilde{W}},{\bf
w}}^{\dagger}S_{\mathbf{\widetilde{W}},{\bf
w}}(f)=S_{\mathbf{\widetilde{W}},{\bf
w}}^{\dagger}w_{i_0}^2\pi_{{\widetilde {W}}_{i_0}}(f)$. Therefore
$f\in S_{\mathbf{\widetilde{W}},{\bf w}}^{\dagger}({\widetilde
{W}}_{i_0})\cap S_{\mathbf{W},{\bf w}}^{\dagger}(W_{i_0}\cap
{\widetilde{W}_{i_0}}^\bot)$, i.e. $f=S_{\mathbf{\widetilde{W}},{\bf
w}}^{\dagger}(h)=S_{\mathbf{W},{\bf w}}^{\dagger}(s)$, where $h\in
{\widetilde {W}}_{i_0}$ and $s\in W_{i_0}\cap
{\widetilde{W}_{i_0}}^\bot$. But then
$S_{\mathbf{\widetilde{W}},{\bf
w}}^{\dagger}(w_{i_0}^2\pi_{{\widetilde
{W}}_{i_0}}\left(\frac{h}{w_{i_0}^2}\right))=S_{\mathbf{W},{\bf
w}}^{\dagger}(w_{i_0}^2\pi_{{W}_{i_0}}\left(\frac{s}{w_{i_0}^2}\right))$,
hence $\frac{h}{w_{i_0}^2}=\frac{s}{w_{i_0}^2}$, and so $h=s=f=0$,
which is a contradiction.
\end{proof}

\section*{Acknowledgement}
The research of S. B. Heineken and P. M. Morillas has been partially
supported by Grants PIP 112-201501-00589-CO (CONICET) and PROIPRO
3-2116 (UNSL). S. B. Heineken also acknowledges the support of
Grants PICT-2011-0436 (UBA) and UBACyT 20020130100422BA.




\begin{thebibliography}{10}

\bibitem{Casazza-Kutyniok (2012)}
P. G. Casazza and G. Kutyniok Eds., Finite Frames. Theory and
Applications, Birkh\"{a}user, Boston, 2012.

\bibitem{Casazza-Kutyniok (2004)}
P. G. Casazza and G. Kutyniok. Frames of subspaces, Contemp. Math.,
345:87-113, 2004.

\bibitem{Casazza-Kutyniok-Li (2008)}
P. G. Casazza, G. Kutyniok and S. Li. Fusion frames and distributed
processing, Appl. Comput. Harmon. Anal., 25:114-132, 2008.

\bibitem{Christensen (2016)}
O. Christensen, An introduction to frames and Riesz bases, Second
Ed., Birkh\"{a}user, Boston, 2016.

\bibitem{Christensen-Eldar (2006)}
O. Christensen, Y.C. Eldar, Characterization of oblique dual frame
pairs, EURASIP J. Appl. Signal Process., 1-11, 2006.

\bibitem{Christensen-Eldar (2004)}
O. Christensen and Y.C. Eldar. Oblique dual frames and
shift-invariant spaces, Appl. Comput. Harmon. Anal., 17 :48-68,
2004.

\bibitem{Duffin-Schaeffer (1952)} R. J. Duffin and A. C.
Schaeffer, A class of nonharmonic Fourier series, Trans. Amer. Math.
Soc. 72 (1952), 341-366.

\bibitem{Eldar (2003a)}
Y.C. Eldar. Sampling with arbitrary sampling and reconstruction
spaces and oblique dual frame vectors, J. Fourier Anal. Appl., 9(1)
:77-96, 2003.

\bibitem{Eldar (2003b)}
Y.C. Eldar. Sampling without input constrains: consistent
reconstruction in arbitrary spaces, in Sampling, Wavelets and
Tomography, A. Zayed and J. J. Benedetto Eds., Birkh\"{a}user
Boston, 33-60, 2003.

\bibitem{Eldar-Werther (2005)}
Y.C. Eldar and T. Werther. General framework for consistent sampling
in Hilbert spaces, Int. J. Wavelets Multiresolut. Inf. Process.,
3(4) :497-509, 2005.

\bibitem{Heineken-Morillas-Benavente-Zakowicz (2014)}
S. B. Heineken, P. M. Morillas, A. M. Benavente and M. I. Zakowicz.
Dual fusion frames, Arch. Math. 103: 355-365, 2014.

\bibitem{Heineken-Morillas (2014)}
S. B. Heineken, P. M. Morillas, Properties of finite dual fusion
frames, Linear Algebra Appl. 453, 1-27, 2014.

\bibitem{Kovacevic-Chebira (2008)}
J. Kova\v{c}evi\'{c} and A. Chebira. An introduction to frames,
Found. Trends Signal Process., 2:1-94, 2008.

\end{thebibliography}
\end{document}